\newtheorem{Le}{Lemma}[section]
\newtheorem{Prop}[Le]{Proposition}
\newtheorem{Th}[Le]{Theorem}
\newtheorem{Cor}[Le]{Corollary}
\numberwithin{equation}{section}
\renewenvironment{proof}{\paragraph{Proof.}}{\hfill$\square$}
\theoremstyle{definition}
\newtheorem{Rem}[Le]{Remark}
\DeclareMathOperator{\sign}{sign}
\DeclareMathOperator{\grad}{grad}
\newcommand{\RNumb}[1]{\uppercase\expandafter{\romannumeral #1\relax}}
\renewcommand{\le}{\leq}
\renewcommand{\ge}{\geq}
\newcommand{\ci}[1]{_{ {}_{\scriptstyle #1}}}
\newcommand{\cii}[1]{_{ {}_{ #1}}}
\newcommand{\av}[2]{\langle #1\rangle\cii {#2}}
\newcommand{\df}{\buildrel\rm{def}\over=}
\newcommand{\R}{\mathbb{R}}
\newcommand{\AAA}{A}
\newcommand{\Set}[2]{\Big\{{#1}\colon{#2}\Big\}}
\newcommand{\Bell}{\boldsymbol{B}}
\newcommand{\diff}[2]{\frac{\partial #1}{\partial #2}}
\newcommand{\eqlb}[2]{\begin{equation}\label{#1} #2 \end{equation}}
\renewcommand{\leq}{\leqslant}
\renewcommand{\geq}{\geqslant}
\newcommand{\Am}[1]{A_{m_#1}}
\newcommand{\Ak}[1]{A_{k_#1}}
\newcommand{\half}{\tfrac12}
\newcommand{\PsiL}{\Psi\cii{\mathrm L}}
\newcommand{\PsiR}{\Psi\cii{\mathrm R}}
\newcommand{\FL}{F\cii{\mathrm L}}
\newcommand{\FR}{F\cii{\mathrm R}}
\newcommand{\HL}{H\cii{\mathrm L}}
\newcommand{\HR}{H\cii{\mathrm R}}
\newcommand{\KL}{K\cii{\mathrm L}}
\newcommand{\KR}{K\cii{\mathrm R}}
\newcommand{\wL}{w\cii{\mathrm L}}
\newcommand{\wR}{w\cii{\mathrm R}}
\newcommand{\BMO}{\mathrm{BMO}}
\newcommand{\vf}{\varphi}
\newcommand{\eps}{\varepsilon}
\newcommand{\condim}{\mathfrak{C}}
\newcommand{\qqq}{Q}
\newcommand{\KerP}{P}
\newcommand{\KerH}{H}
\newcommand{\Kerr}{K}
\newcommand{\DDD}{D}
\newcommand{\EEE}{E}
\begin{document}
\author{Vasily Vasyunin \and Pavel Zatitskiy \and Ilya Zlotnikov}
\date{
    \today
}
\title{Sharp multiplicative inequalities with $\BMO$~$\mathrm{II}$
\thanks{Support by the Russian Science Foundation grant 19-71-10023.}}

\maketitle
\begin{abstract}
We find the best possible constant~$C$ in the inequality 
$$
\|\vf\|_{L^r}^{\phantom{\frac{p}{r}}}\leq C\|\vf\|_{L^p}^{\frac{p}{r}}\|\vf\|_{\BMO}^{1-\frac{p}{r}}
$$
for all possible values of parameters $p$ and~$r$ such that $1 \le p < r < +\infty$. We employ the Bellman function technique to solve 
this problem. The Bellman function of three variables corresponding to this problem has a rather complicated structure, however, we managed to provide the explicit formulas for this function.
First, we solve the problem on an interval and then transfer our results to the circle and the line. We also obtain explicit estimates in multi-dimensional cases.  

\medskip

\emph{2010 MSC subject classification}: 42B35, 60G45.

\emph{Keywords}: bounded mean oscillation, Bellman function, interpolation.
\end{abstract}

\section{Introduction}
This is a continuation of the paper~\cite{SVZ},
where a partial case of the following problem was considered: for fixed parameters $p$ and $r$, $1\leq p < r < \infty$, 
find the sharp constant $C=C(p,r)$ such that the inequality
\eqlb{FirstMultiplicative}{
\|\vf\|_{L^r}^{\phantom{\frac{p}{r}}}\leq C\|\vf\|_{L^p}^{\frac{p}{r}}\|\vf\|_{\BMO}^{1-\frac{p}{r}}
}
is true for all functions $\vf$ from~$\BMO$. It was proved in~\cite{SVZ} that in the case $r\geq 2$ the best
possible constant is
\eqlb{eq091101}{
C(p,r)=\Big(\frac{\Gamma(r+1)}{\Gamma(p+1)}\Big)^\frac 1r.
}
We also mention that the same estimate was obtained in~\cite{SV2} for the partial case $1\leq p\leq2\leq r<+\infty$.
Here we find the best constant $C(p,r)$ for the remaining case $1\leq p<r<2$. The expression for this constant is implicit and too difficult to be presented at the very beginning of the paper. All the details can be found in Section~\ref{const}.
So we complete the work announced in Remark~1.4 of~\cite{SVZ}.

Lebesgue spaces and~$\BMO$ here can be considered either on the line $\mathbb{R}$, or on the unit 
circle $\mathbb{T}$, or on an interval $I$. In order to speak about  sharp constant, we need to specify the 
$\BMO$-norm. We will consider $L^2$-based $\BMO$-norm, namely,
\eqlb{BMOnorm}{
\|\vf\|_{\BMO}^2\df\sup\limits_J\av{\,|\vf-\av{\vf}{J}|^2}{J},
}
where the supremum is taken over all subintervals $J$.
Here and in what follows we use the notation~$\av{\psi}{E}$ to denote the average of a function~$\psi$ 
over a set~$E$ of positive finite measure, that is
\begin{equation*}
\av{\psi}{E}\df\frac1{|E|}\int\limits_E\psi.
\end{equation*}
Since the relation~\eqref{BMOnorm} defines a seminorm and for constant function 
it is zero, we need to impose the additional 
restriction $\av\vf{}=0$ for the cases of a circle and an interval to obtain \eqref{FirstMultiplicative}. 

We will not repeat here the motivation and the references concerning possible applications. All of 
that may be found in~\cite{SVZ}. We stress that we are not so much interested in the inequality itself but rather in the corresponding Bellman function due to its importance for the future development of the Bellman function method.

We state now formally the main results of the paper.

\begin{Th}\label{MultiplicativeTheoremInterval}
Let $I,$ $I \subset \R,$ be an interval. 
The inequality
\eqlb{eq141202}{
\|\vf\|_{L^r(I)}^{\phantom{\frac{p}{r}}}\leq
C(p,r)\|\vf\|_{L^p(I)}^{\frac{p}{r}}\|\vf\|_{\BMO(I)}^{1-\frac{p}{r}},
\qquad\vf\in\BMO,\quad\av{\vf}{I}=0,
}
holds true for $1 \le p \le r < 2$ with the sharp constant $C(p,r)$ described in Section~{\rm\ref{const}}.
\end{Th}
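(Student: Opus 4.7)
The plan is to prove the theorem by the Bellman function method, following the general scheme inherited from~\cite{SVZ}. After normalizing $|I|=1$ and fixing $\|\vf\|_\BMO\leq\eps$, raising~\eqref{eq141202} to the $r$-th power reduces the sharp inequality to a pointwise upper bound on $\av{|\vf|^r}{I}$ in terms of the lower moments of $\vf$. Because the BMO seminorm~\eqref{BMOnorm} is $L^2$-based, a genuine three-variable Bellman function is needed:
$$
\Bell(x_1,x_2,x_3)\df\sup\Set{\av{|\vf|^r}{I}}{\av{\vf}{I}=x_1,\ \av{|\vf|^p}{I}=x_2,\ \av{\vf^2}{I}=x_3,\ \|\vf\|_\BMO\leq\eps}.
$$
The sharp constant $C(p,r)$ is then read off by evaluating $\Bell$ at $x_1=0$ and optimizing over the admissible $(x_2,x_3)$.

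The first step is to identify the admissible domain $\Omega\subset\R^3$ (cut out by the Jensen inequalities $x_1^2\leq x_3$, $|x_1|^p\leq x_2$, and the BMO condition $x_3-x_1^2\leq\eps^2$) and to split its boundary into a \emph{fixed} part $\dfi\Omega$, on which extremal functions are constants and the values of $\Bell$ are explicit, and a \emph{free} part $\dfree\Omega$ that must be found as part of the solution. The Bellman induction then forces $\Bell$ to be concave along every admissible chord in $\Omega$ (those corresponding to splittings $I=I^-\cup I^+$ preserving the BMO bound), and moreover to be linear along a distinguished foliation of $\Omega$ by extremal chords. Writing down the compatibility ODEs for this foliation, integrating them against the boundary data on $\dfi\Omega$, and piecing together the resulting formulas yields a candidate for $\Bell$.

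The main obstacle, and the entire novelty over~\cite{SVZ}, lies in the range $r<2$. There the power function $t\mapsto|t|^r$ is only $C^1$ (and less smooth as $r\to 1$), which causes the foliation to bifurcate: $\Omega$ splits into several chambers, each governed by its own explicit formula, and these chambers must be glued across their common interfaces with enough regularity to guarantee global concavity of $\Bell$. Carrying out this construction, verifying the main Bellman inequality at every point of $\Omega$, and matching the pieces is the bulk of the work; the resulting formulas are too intricate to present here and are collected in Section~\ref{const}. Once $\Bell$ is verified, the upper bound in~\eqref{eq141202} follows from routine Bellman induction on dyadic subintervals of $I$, and sharpness follows by constructing near-extremizing sequences from the integral curves of the foliation, which produces the claimed constant $C(p,r)$.
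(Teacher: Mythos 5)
Your outline follows the same high-level Bellman-function strategy as the paper: reduce to a pointwise upper bound on $\av{|\vf|^r}{I}$, introduce a three-variable Bellman function, construct a candidate by foliating the domain into extremal leaves, glue the pieces $C^1$-smoothly and verify local concavity, build extremizers, and then read off the constant by optimizing over the $x_1=0$ slice. Two of the details you sketch, however, would cause trouble if carried out literally. The more serious one is your description of the admissible domain. It is not the set cut out by pointwise Jensen inequalities together with the BMO constraint: for a fixed pair $(\av{\vf}{I},\av{\vf^2}{I})$ in the parabolic strip, the range of achievable values of $\av{|\vf|^p}{I}$ is exactly the interval $[\Bell^-_{p;\eps},\Bell^+_{p;\eps}]$ between the two-variable Bellman functions of \cite{SV2}, and this interval is strictly narrower than what Jensen alone allows. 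The whole foliation is attached to these curved graphs (they form the ``fixed'' boundary on which extremizers live), so working on the larger, incorrectly specified domain would not produce a locally concave candidate matching the boundary data.

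The second issue is a heuristic misattribution. The bifurcation of the foliation in the range $1\le p<r<2$ is not caused by a loss of smoothness of $t\mapsto|t|^r$; that map is still $C^1$ for $r>1$, and regularity of the boundary data is not the operative quantity. What matters is the sign of $(r-2)(r-p)$: for $p<r<2$ it is negative, so the \emph{upper} Bellman function is the candidate $B_2$, whose foliation consists of chords, fans of tangent lines, and the two extra two-dimensional leaf families near the origin, rather than the simple two-dimensional triangle foliation $B_1$ of \cite{SVZ}. The productive intuition is the relative convexity of $|t|^p$, $t^2$, $|t|^r$, not the differentiability of $|t|^r$; starting from the smoothness heuristic would not lead you to guess the correct shape of the extremal leaves.
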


\begin{Rem}\label{Rem1}
For $p=1$  we have explicit expression for the constant 
$$
C^r(1,r) = \begin{cases}
 \ \ 2^{r-1},& 1<r\leq 2;\\
 \Gamma(r+1),& \ \ 2\leq  r.
\end{cases}
$$
\end{Rem}

Theorem~\ref{MultiplicativeTheoremInterval} implies the corresponding inequality for the circle and for the line.

\begin{Th}\label{Ctheorem}
For $1 \leq p \leq r < 2$ the inequality
\eqlb{MultCircle}{
\|\vf\|_{L^r(\mathbb{T})}^{\phantom{\frac{p}{r}}} \leq C(p,r)\|\vf\|_{L^p(\mathbb{T})}^{\frac{p}{r}}
\|\vf\|_{\BMO(\mathbb{T})}^{1-\frac{p}{r}},\qquad \vf \in \BMO(\mathbb{T}),\ \int\limits_{\mathbb{T}}\vf = 0,
}
holds true with the same sharp constant $C(p,r) $.
\end{Th}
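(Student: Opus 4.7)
My plan is to split the proof into two parts: establishing the inequality and verifying sharpness.

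\textbf{Inequality.} Fix any point on $\mathbb{T}$ and cut the circle there, identifying $\mathbb{T}$ with an interval $I$ of the same length. A mean-zero function $\vf$ on $\mathbb{T}$ becomes a mean-zero function on $I$ with identical $L^p$ and $L^r$ norms. Every subinterval $J\subseteq I$ is in particular an arc of $\mathbb{T}$ (namely, the arc avoiding the cut point), so the family of test sets in the definition of $\|\vf\|_{\BMO(I)}$ is a subfamily of the one used for $\|\vf\|_{\BMO(\mathbb{T})}$, whence $\|\vf\|_{\BMO(I)}\leq\|\vf\|_{\BMO(\mathbb{T})}$. Applying Theorem~\ref{MultiplicativeTheoremInterval} on $I$ then yields~\eqref{MultCircle} with the same constant $C(p,r)$.

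\textbf{Sharpness.} I would exploit the invariance of the ratio $\|\vf\|_{L^r}/(\|\vf\|_{L^p}^{p/r}\|\vf\|_{\BMO}^{1-p/r})$ under affine rescaling of the domain. Given $\delta>0$, sharpness in Theorem~\ref{MultiplicativeTheoremInterval} supplies a mean-zero function $\vf_0$ on a reference interval $I_0$ whose ratio exceeds $C(p,r)-\delta$. I transplant $\vf_0$ by rescaling onto a short sub-arc $J\subset\mathbb{T}$ of length $\ell\ll|\mathbb{T}|$ and extend by zero to a function $\tilde\vf\in\BMO(\mathbb{T})$; the mean-zero condition on $\mathbb{T}$ is automatic because $\av{\vf_0}{I_0}=0$. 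The $L^p$ and $L^r$ norms transfer cleanly, arcs $K\subseteq J$ reproduce $\|\vf_0\|_{\BMO(I_0)}$ via the rescaling, and for arcs $K\supseteq J$ the crude bound $\av{|\tilde\vf|^2}{K}\leq \ell\,\av{|\vf_0|^2}{I_0}/|K|\leq\|\vf_0\|_{\BMO(I_0)}^2$ suffices.

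The genuine obstacle is the family of arcs $K\subset\mathbb{T}$ that partially overlap $J$. Here I would exploit the mean-zero condition together with a John--Nirenberg-type control $|\av{\vf_0}{[0,s]}|\lesssim\|\vf_0\|_{\BMO(I_0)}\log(1/s)$ on partial averages to show that, as $\ell/|\mathbb{T}|\to 0$, the contribution of such arcs to $\|\tilde\vf\|_{\BMO(\mathbb{T})}$ does not exceed $\|\vf_0\|_{\BMO(I_0)}(1+o(1))$. Sending first $\ell\to 0$ and then $\delta\to 0$ completes the argument. Overall, the upper bound is a formal consequence of the interval theorem; the real technical weight of the transfer lies in controlling the $\BMO$ norm of the embedded near-extremizer on arcs crossing the boundary of its support.
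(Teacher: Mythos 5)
Your upper-bound half is correct and is exactly the paper's route: cutting $\mathbb{T}$ at a point only shrinks the family of test intervals, so $\|\vf\|_{\BMO(I)}\le\|\vf\|_{\BMO(\mathbb{T})}$, the mean-zero property and the $L^p$, $L^r$ norms are unchanged, and Theorem~\ref{MultiplicativeTheoremInterval} applies verbatim.

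The sharpness half has a genuine gap, located exactly where you put the ``technical weight'': the arcs straddling an endpoint of the supporting arc $J$. For arcs of length much smaller than $\ell$ the limit $\ell/|\mathbb{T}|\to 0$ buys nothing, because mean oscillation is scale invariant: near $\partial J$ the zero extension $\tilde\vf$ is an exact rescaled copy of the zero extension of $\vf_0$ near $\partial I_0$, independently of $\ell$. The John--Nirenberg-type bound $|\av{\vf_0}{[0,s]}|\lesssim\log(1/s)$ controls averages anchored at the endpoint, i.e.\ the scales comparable to $\ell$ and larger, but says nothing about the $L^2$ oscillation over a short arc half inside and half outside $J$; so the claimed $\|\tilde\vf\|_{\BMO(\mathbb{T})}\le\|\vf_0\|_{\BMO(I_0)}(1+o(1))$ is unjustified, and it is false for the natural near-extremizers of this problem. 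Indeed, the optimizer $\phi_0$ at the extremal point $(0,1,x_3)\in F(0)$ (with $\eps=1$, see~\eqref{eq280801}) attains the values $\pm(\xi+\eps)$ with $\xi>\eps$ on plateaus of positive length adjacent to the endpoints of its interval; an arc centered at an endpoint of $J$, half on such a plateau and half in the zero region, has $L^2$ mean oscillation $(\xi+\eps)/2>\eps$, so the zero extension inflates the $\BMO$ norm by a fixed factor no matter how small $\ell$ is, and the ratio does not converge to $C(p,r)$. (In general zero extension need not even preserve membership in $\BMO$: extend $\log(1/t)$ by zero.) This endpoint surgery is precisely the nontrivial content of the transfer: the paper takes the explicit optimizer $\phi_0$ and invokes Lemma~6.3 of~\cite{SVZ}, which produces a $1$-periodic function $\psi_0$ whose moments of orders $p$, $r$, $2$ match those of $\phi_0$ up to $O(\delta)$ while $\|\psi_0\|_{\BMO(\mathbb{T})}\le 1+\delta$; letting $\delta\to0$ gives sharpness. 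To repair your argument you would need an analogous modification of the near-extremizer near $\partial I_0$ (gluing it to zero with controlled loss in the moments and in the $\BMO$ norm); this cannot be extracted from the logarithmic bound on partial averages alone.
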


\begin{Th}\label{Rtheorem}
For $1 \leq p \leq r < 2$ the inequality
\eqlb{eq141201}{
\|\vf\|_{L^r(\R)}^{\phantom{\frac{p}{r}}} \leq C(p,r)\|\vf\|_{L^p(\R)}^{\frac{p}{r}}
\|\vf\|_{\BMO(\R)}^{1-\frac{p}{r}},\qquad \vf \in L^p(\R),
}
holds true with the same sharp constant $C(p,r)$.
\end{Th}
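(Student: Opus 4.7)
The plan is to transfer the interval inequality of Theorem~\ref{MultiplicativeTheoremInterval} to $\R$ by exhausting the line with expanding symmetric intervals $I_N = [-N, N]$ and passing to the limit. Fix $\vf \in L^p(\R)$ and assume $\|\vf\|_{\BMO(\R)} < \infty$, the other case being vacuous. Set $c_N = \av{\vf}{I_N}$; H\"older's inequality yields $|c_N| \leq (2N)^{-1/p}\|\vf\|_{L^p(\R)}$, so $c_N \to 0$. Since $\vf - c_N$ has zero mean on $I_N$ and the $\BMO$ seminorm is invariant under additive constants and does not grow under restriction to a subinterval, Theorem~\ref{MultiplicativeTheoremInterval} gives
\begin{equation*}
\|\vf - c_N\|_{L^r(I_N)} \leq C(p, r) \|\vf - c_N\|_{L^p(I_N)}^{p/r} \|\vf\|_{\BMO(\R)}^{1 - p/r}.
\end{equation*}

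I would then pass to the limit $N \to \infty$. The triangle inequality together with $|c_N|(2N)^{1/r} \leq (2N)^{1/r - 1/p}\|\vf\|_{L^p(\R)} = o(1)$, which uses $r > p$, gives $\|\vf\|_{L^r(I_N)} \leq \|\vf - c_N\|_{L^r(I_N)} + o(1)$, and monotone convergence yields $\|\vf\|_{L^r(I_N)} \nearrow \|\vf\|_{L^r(\R)}$. On the right-hand side, when $p > 1$, the convergence $\|\vf - c_N\|_{L^p(I_N)} \to \|\vf\|_{L^p(\R)}$ is established by estimating $\bigl||\vf - c_N|^p - |\vf|^p\bigr| \leq p|c_N|(|\vf| + |c_N|)^{p-1}$, splitting $\vf = \vf_1 + \vf_2$ with $\vf_1$ compactly supported and $\|\vf_2\|_{L^p(\R)}$ arbitrarily small, and combining this with the refined bound $(2N)|c_N|^p \to 0$, which improves on the naive H\"older estimate $(2N)|c_N|^p \leq \|\vf\|_{L^p(\R)}^p$ and follows from the same splitting. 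Substituting these limits into the interval inequality yields the stated estimate and in particular shows $\vf \in L^r(\R)$.

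The main obstacle is the endpoint $p = 1$. Here $(2N)|c_N| = \bigl|\int_{I_N} \vf\bigr|$ is only bounded by $\|\vf\|_{L^1(\R)}$ and need not vanish, so the mean-subtraction may distort the $L^1$-norm by a multiplicative factor that would spoil the sharp constant. I expect to handle this endpoint directly, exploiting the explicit formula $C^r(1, r) = 2^{r-1}$ of Remark~\ref{Rem1}: a distribution-function / layer-cake analysis reduces the inequality on $\R$ for $p = 1$ to a sharp estimate on simple functions, bypassing the interval transfer. Sharpness of $C(p, r)$ on $\R$ finally follows from Theorem~\ref{MultiplicativeTheoremInterval} by approximating with zero-mean extremizing sequences concentrated on bounded intervals, chosen so that their extension by zero to $\R$ does not asymptotically increase the $\BMO$ seminorm.
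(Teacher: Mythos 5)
Your limiting argument for the validity of~\eqref{eq141201} when $1<p<r$ is sound (subtract $c_N=\av{\vf}{I_N}$, apply Theorem~\ref{MultiplicativeTheoremInterval} on $I_N$, use $(2N)^{1/r}|c_N|\to0$ and $(2N)^{1/p}|c_N|\to0$), and it plays the role of the step the paper simply delegates to Subsection~6.1 of~\cite{SVZ}. But the endpoint $p=1$ is left as a genuine gap: you correctly observe that $2N|c_N|=|\int_{I_N}\vf|$ need not vanish, yet the proposed ``distribution-function / layer-cake analysis reducing to simple functions'' is only a declaration of intent --- no reduction is exhibited, and it is not explained why such a reduction would preserve the sharp constant $2^{r-1}$ of Remark~\ref{Rem1}. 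The paper does not treat $p=1$ as a throwaway case either: it proves $\Bell^+_{1,r;\eps}=B_2$ by a separate limiting argument ($p\to1+$ at the level of the Bellman functions, Section~10) and only then transfers to the line; some argument of comparable substance is needed in your scheme.

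The larger gap is sharpness, which the paper explicitly calls the hard half. Your final sentence assumes precisely the difficult point: that zero-mean near-extremizers on an interval can be ``chosen so that their extension by zero to $\R$ does not asymptotically increase the $\BMO$ seminorm.'' For a generic zero-mean function on $[0,1]$ this fails --- intervals straddling the endpoints see the jump between the function's boundary values and $0$, and the $L^2$-oscillation can grow by a definite factor. The relevant extremizer here is the optimizer $\phi_0$ at the interior point $(0,1,x_3)$ where the supremum in~\eqref{240801} is attained (the explicit function~\eqref{eq280801}); it does not vanish near the ends of its interval, so the naive zero extension does increase the norm, and no mechanism for the required ``choice'' is offered. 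The paper's proof is organized exactly around this obstacle: Lemma~6.3 of~\cite{SVZ} first produces from $\phi_0$ a $1$-periodic function on $\R$ with $\BMO(\mathbb{T})$-norm at most $1+\delta$ and the same $p$-th and $r$-th moments up to $O(\delta)$, and Lemma~6.4 of~\cite{SVZ} then cuts off one period and extends by zero while keeping the $\BMO(\R)$-norm at most $1+\delta$. Without invoking these transference lemmas (or constructing an explicit compactly supported sequence and verifying its $\BMO(\R)$-norm by hand), the sharpness assertion in your proposal is unsupported, so the theorem as stated is not yet proved.
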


We deduce Theorems~\ref{Ctheorem} and~\ref{Rtheorem} from 
Theorem~\ref{MultiplicativeTheoremInterval} in Section~\ref{sec_circle_line}.

We also prove several statements for higher dimensions. Let $\condim(n) = 4(1+2\sqrt{n-1})$ for $n\in \mathbb{N}$. 
\begin{Th}\label{Cubedtheorem}
If $1 \leq p \leq r<\infty$\textup, then the inequality
\eqlb{eq171001}{
\|\vf\|_{L^r(\qqq)}^{\phantom{\frac{p}{r}}}\leq
 C(p,r)\|\vf\|_{L^p(\qqq)}^{\frac{p}{r}}\Big(\condim(n)\|\vf\|_{\BMO(\qqq)}\Big)^{1-\frac{p}{r}}\!\!\!\!\!,
\qquad\vf\in\BMO(\qqq),\quad\av{\vf}{\qqq}=0,
}
holds\textup, where either $\qqq=I^n$ is a cube in $\mathbb{R}^n$ or $\qqq = \mathbb{T}^n$ is an $n$-dimensional torus. The $\BMO$-norm in~\eqref{eq171001} is defined by~\eqref{BMOnorm}\textup, where supremum is taken over subcubes $J$.
\end{Th}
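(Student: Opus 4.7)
The plan is to reduce the $n$-dimensional inequality \eqref{eq171001} to the one-dimensional interval inequality \eqref{eq141202} by passing to the decreasing rearrangement of $\vf$. Let $J\subset\R$ be an interval of length $|\qqq|$ and let $\vf^*\colon J\to\R$ denote the decreasing (distribution-preserving) rearrangement of $\vf$. Equimeasurability gives $\|\vf^*\|_{L^s(J)}=\|\vf\|_{L^s(\qqq)}$ for every $s\in[1,\infty)$, and in particular $\av{\vf^*}{J}=\av{\vf}{\qqq}=0$, so the mean-zero hypothesis of the one-dimensional theorem is preserved.

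\textbf{Key comparison and deduction.} The main technical ingredient is a rearrangement--$\BMO$ comparison
\[
\|\vf^*\|_{\BMO(J)}\leq \condim(n)\,\|\vf\|_{\BMO(\qqq)},
\]
where both norms are the $L^2$-based ones from \eqref{BMOnorm}. Granted this, applying Theorem~\ref{MultiplicativeTheoremInterval} (when $r<2$) or the already-established one-dimensional inequality from~\cite{SVZ},~\cite{SV2} (when $r\geq2$) to $\vf^*$ on $J$ produces
\[
\|\vf\|_{L^r(\qqq)}=\|\vf^*\|_{L^r(J)}\leq C(p,r)\|\vf^*\|_{L^p(J)}^{\frac pr}\|\vf^*\|_{\BMO(J)}^{1-\frac pr}\leq C(p,r)\|\vf\|_{L^p(\qqq)}^{\frac pr}\bigl(\condim(n)\|\vf\|_{\BMO(\qqq)}\bigr)^{1-\frac pr},
\]
which is precisely \eqref{eq171001}. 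The torus case $\qqq=\mathbb{T}^n$ is handled identically: $J$ is an interval of length $|\mathbb{T}^n|$, and the supremum in \eqref{BMOnorm} on $\mathbb{T}^n$ is over subcubes of the torus, for which the same rearrangement bound will be established.

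\textbf{Main obstacle.} The heart of the argument, and the only dimension-dependent step, is the rearrangement--$\BMO$ comparison with the constant $\condim(n)=4(1+2\sqrt{n-1})$. Since \eqref{BMOnorm} is $L^2$-based, the classical Klemes-type inequalities (which give constant $1$ for $L^1$-based $\BMO$ on an interval) do not apply directly; the factor $4$ is expected to absorb the passage from $L^1$- to $L^2$-oscillations in dimension one, while the dimensional contribution $1+2\sqrt{n-1}$ should emerge from covering the superlevel sets $\{\vf>\lambda\}\subset\qqq$ by suitable families of subcubes and controlling their $L^2$-oscillations across the $n-1$ transverse directions via a Cauchy--Schwarz-type bookkeeping. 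Making this estimate quantitative with the stated constant is the only non-routine step; everything else reduces to equimeasurability and the previously established one-dimensional theorems.
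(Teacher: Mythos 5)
Your reduction is exactly the paper's: pass to the monotone (distribution-preserving) rearrangement $\vf^*$ on an interval, use equimeasurability of $L^s$ norms and means, bound $\|\vf^*\|_{\BMO(J)}$ by $\condim(n)\|\vf\|_{\BMO(\qqq)}$, and apply Theorem~\ref{MultiplicativeTheoremInterval} (or the result of~\cite{SVZ} for $r\ge 2$). The one point where your commentary drifts is the source of the constant: the paper takes the rearrangement--$\BMO$ bound with the full constant $\condim(n)=4(1+2\sqrt{n-1})$ directly from Burchard--Dafni--Gibara~\cite{BDG}, who prove it for the $L^1$-based $\BMO$-norm; the $L^2$-based version is asserted to follow by a straightforward modification of the same argument, so the factor $4$ is not a penalty for passing from $L^1$ to $L^2$ oscillation but is already present in the $L^1$ estimate of~\cite{BDG}.
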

This theorem follows immediately from Theorem~\ref{MultiplicativeTheoremInterval} (and Theorem~1.1 in~\cite{SVZ}) and the fact that the monotone rearrangement operator acts from $\BMO(I^n)$ and $\BMO(\mathbb{T}^n)$ to $\BMO(I)$ with the norm bounded by $\condim(n)$. For this estimate see \cite{BDG}, where the estimate for the monotone rearrangement operator is obtained for $L^1$-based $\BMO$-norm. The corresponding estimate for $L^2$-based norm may be deduced by a straightforward modification. 

One may use the limiting arguments (see the details in paper~\cite{SVZ}, Section~6.1) to obtain the following result.
\begin{Cor}\label{Rdcor}
If $1 \leq p \leq r<\infty$\textup, then we have the following inequality\textup:
$$
\|\vf\|_{L^r(\mathbb{R}^n)}^{\phantom{\frac{p}{r}}}\leq
 C(p,r)\|\vf\|_{L^p(\mathbb{R}^n)}^{\frac{p}{r}} \Big(\condim(n) \|\vf\|_{\BMO(\mathbb{R}^n)}\Big)^{1-\frac{p}{r}}\!\!\!\!\!,
\qquad\vf\in L^p(\mathbb{R}^n).
$$
\end{Cor}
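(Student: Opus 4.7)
My plan is to deduce Corollary~\ref{Rdcor} from Theorem~\ref{Cubedtheorem} by applying the latter on the growing cubes $Q_N=[-N,N]^n$ and passing to the limit $N\to\infty$, as sketched in Section~6.1 of~\cite{SVZ}. Assume $\|\varphi\|_{\BMO(\mathbb{R}^n)}<\infty$, since otherwise~\eqref{eq171001} is vacuous. Set $c_N:=\av{\varphi}{Q_N}$ and $\psi_N:=\varphi-c_N$, so that $\av{\psi_N}{Q_N}=0$; Theorem~\ref{Cubedtheorem} applied on $Q_N$ then gives
$$
\|\psi_N\|_{L^r(Q_N)} \le C(p,r)\,\|\psi_N\|_{L^p(Q_N)}^{p/r}\bigl(\condim(n)\,\|\psi_N\|_{\BMO(Q_N)}\bigr)^{1-p/r},
$$
and $\|\psi_N\|_{\BMO(Q_N)}\le\|\varphi\|_{\BMO(\mathbb{R}^n)}$ because the supremum in~\eqref{BMOnorm} defining the former is taken over a subfamily of the cubes used for the latter.

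The next step is the limiting analysis of the three norms. Hölder's inequality yields $|c_N|\le|Q_N|^{-1/p}\|\varphi\|_{L^p(\mathbb{R}^n)}$, so $c_N\to 0$ and in particular $|c_N|\,|Q_N|^{1/r}\le|Q_N|^{1/r-1/p}\|\varphi\|_{L^p(\mathbb{R}^n)}\to 0$, since $r>p$. Because $|\psi_N|\,\mathbf{1}_{Q_N}\to|\varphi|$ pointwise, Fatou's lemma gives $\|\varphi\|_{L^r(\mathbb{R}^n)}\le\liminf_{N\to\infty}\|\psi_N\|_{L^r(Q_N)}$. For the $L^p$-factor on the right, the triangle inequality $\|\psi_N\|_{L^p(Q_N)}\le\|\varphi\|_{L^p(\mathbb{R}^n)}+|c_N|\,|Q_N|^{1/p}$ reduces matters to showing $|c_N|\,|Q_N|^{1/p}\to 0$. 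For $p>1$ this follows by splitting $\int_{Q_N}\varphi=\int_{Q_M}\varphi+\int_{Q_N\setminus Q_M}\varphi$, applying Hölder's inequality on the second piece, and invoking the tail bound $\|\varphi\|_{L^p(\mathbb{R}^n\setminus Q_M)}\to 0$ as $M\to\infty$. Substituting these estimates into the displayed inequality and taking $\liminf$ then produces~\eqref{eq171001}.

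The main obstacle is the endpoint $p=1$, where $|c_N|\,|Q_N|=|\int_{Q_N}\varphi|$ tends to $|\int_{\mathbb{R}^n}\varphi|$ rather than vanishing, so the crude triangle inequality above loses an additive constant in the $L^p$-term. This case is handled by an additional approximation argument, reducing the problem to functions for which the relevant tail quantities can be made small, carried out in detail in~\cite[Sec.~6.1]{SVZ}; once this step is in place, combining it with the preceding limiting argument completes the proof.
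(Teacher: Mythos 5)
For $1<p\le r$ your argument is correct and is essentially the limiting argument the paper itself has in mind (its entire proof of Corollary~\ref{Rdcor} is a pointer to Section~6.1 of~\cite{SVZ}): subtracting $c_N=\av{\vf}{Q_N}$, the bound $\|\vf-c_N\|_{\BMO(Q_N)}\le\|\vf\|_{\BMO(\mathbb{R}^n)}$, Fatou on the left-hand side, and your splitting trick giving $|c_N|\,|Q_N|^{1/p}\to0$ are all fine. The genuine gap is the endpoint $p=1$, which the corollary includes (and which matters in this paper, cf.\ Remark~\ref{Rem1} and Section~\ref{p=1}). There your scheme really does lose a constant: $|c_N|\,|Q_N|=\big|\int_{Q_N}\vf\big|\to\big|\int_{\mathbb{R}^n}\vf\big|$, so you only obtain $\limsup_N\|\vf-c_N\|_{L^1(Q_N)}\le\|\vf\|_{L^1}+\big|\int_{\mathbb{R}^n}\vf\big|$, and this loss is not an artifact of the triangle inequality: for $\vf=\mathbf{1}_{[0,1]^n}$ one computes $\int_{Q_N}|\vf-c_N|\to2\|\vf\|_{L^1}$. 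Moreover, the repair you sketch (``an approximation argument reducing to functions whose tail quantities can be made small'') cannot work as described: the obstruction is $\int_{\mathbb{R}^n}\vf\ne0$, which is not a tail quantity, and any modification of $\vf$ that kills its integral, or any approximation by globally mean-zero functions along which the $L^r$ norms converge, inflates the $L^1$ norm by at least $\big|\int\vf\big|$ and reproduces the same loss. So at $p=1$ your citation of~\cite{SVZ} is carrying a key idea that your proposal neither supplies nor correctly identifies.

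One way to close the gap within this paper's framework is to avoid subtracting the mean altogether and argue through the Bellman function, as the paper does in the proof of Theorem~\ref{DimHeatTh}. Rearranging $\vf|_{Q_N}$ monotonically (the same ingredient as in Theorem~\ref{Cubedtheorem}) shows that $x^{(N)}=\big(\av{\vf}{Q_N},\av{\vf^2}{Q_N},\av{|\vf|^p}{Q_N}\big)$ lies in $\Omega^3_{\tilde\eps}$ with $\tilde\eps=\condim(n)\|\vf\|_{\BMO(\mathbb{R}^n)}$, and that $\av{|\vf|^r}{Q_N}\le\Bell^+_{p,r;\tilde\eps}(x^{(N)})$. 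Since $\av{\vf^2}{Q_N}\le\|\vf\|_{\BMO(\mathbb{R}^n)}^2+c_N^2<\tilde\eps^2$ for large $N$, the segment joining $(\pm x_1^{(N)},x_2^{(N)},x_3^{(N)})$ lies in $\Omega^3_{\tilde\eps}$ (the $x_3$-constraints along it follow from evenness and local convexity/concavity of $\Bell^{\mp}_{p;\tilde\eps}$), and then evenness in $x_1$ together with local concavity of $\Bell^+_{p,r;\tilde\eps}$ yield $\Bell^+_{p,r;\tilde\eps}(x^{(N)})\le\Bell^+_{p,r;\tilde\eps}(0,x_2^{(N)},x_3^{(N)})\le C(p,r)^r\,\tilde\eps^{\,r-p}x_3^{(N)}$, the last inequality being~\eqref{240801} after rescaling. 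Multiplying by $|Q_N|$ and letting $N\to\infty$ gives the corollary for every $p\ge1$ with no mean subtraction and hence no endpoint difficulty; your argument for $p>1$ can stand, but the case $p=1$ needs this (or an equivalent) extra ingredient rather than an approximation step.
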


Another natural norm on $\BMO(\mathbb{R}^n)$ is the norm defined by~\eqref{BMOnorm}, where supremum is taken over balls~$J$. We will call it the ball-based norm. In order to obtain estimates like we have in Corollary~\ref{Rdcor} related to such ball-based norm, we apply another approach inspired by~\cite{SlavinZ}. Following~\cite{SlavinZ},  we prove dimension-free estimate using the Garcia-type norm on $\BMO(\mathbb{R}^n)$. For $y \in \mathbb{R}^n$ and $t>0$ consider the Poisson kernel $\KerP_t$ and the heat kernel $\KerH_t$: 
\begin{equation}\label{kernels}
\KerP_t(y)=\frac{\Gamma\big(\frac{n+1}2\big)}{\pi^{\frac{n+1}2}}\, \frac t{(t^2+|y|^2)^{\frac{n+1}2} }\,,\qquad
\KerH_t(y)=\frac1{(4\pi t)^{\frac n2}}\, e^{-\frac{|y|^2}{4t}}.
\end{equation}
We will write $\Kerr_t$ for both $\KerP_t$ and $\KerH_t$. 

For a function $\vf$ on $\mathbb{R}^n$ consider its $\Kerr$-extension onto $\mathbb{R}^n\times\mathbb{R}_+$ given by convolution:
$$
\vf_{\scriptscriptstyle \Kerr}(y,t)=(\Kerr_t*\vf)(y), \qquad y \in \mathbb{R}^n, \ \ t>0.
$$
For $y \in \mathbb{R}^n$, $t>0$, this value may be considered as the average of $\vf$ with the weight $\Kerr_t$ centered at $y$ instead of the average over a ball with radius $t$ centered at $y$. 
It is well-known that 
$$
\|\vf\|_{\scriptscriptstyle \Kerr}:=\sup_{(y,t)\in\mathbb{R}^n \times \mathbb{R}_+}\big((\vf^2)_{\scriptscriptstyle \Kerr}(y,t)-(\vf_{\scriptscriptstyle \Kerr}(y,t))^2\big)^{\frac 12}
$$
is an equivalent norm on $\BMO(\mathbb{R}^n)$. For $\Kerr=\KerP$ this norm is called the Garcia norm. For the case $n=1$ you can refer to~\cite{Gar}, Chapter~\textup{VI}, Theorem~\textup{1.2}.

In~\cite{SlavinZ} it is shown how to prove dimension-free estimates on $\BMO$ and $A_p$ equipped with Garcia-type norms having the corresponding estimates for one-dimensional case. We use this approach to prove the following theorem.
\begin{Th}\label{DimHeatTh}
Let $n\in \mathbb{N}$. If $1 \leq p<r<\infty$\textup, then the inequality
$$
\|\vf\|_{L^r(\mathbb{R}^n)}^{\phantom{\frac{p}{r}}}\leq
C(p,r)\|\vf\|_{L^p(\mathbb{R}^n)}^{\frac{p}{r}}\|\vf\|_{\Kerr}^{1-\frac{p}{r}},
\qquad\vf\in L^p(\mathbb{R}^n),
$$
holds\textup, where the kernel $\Kerr$ is either the Poisson kernel or the heat kernel\textup{,} see~\eqref{kernels}. 
\end{Th}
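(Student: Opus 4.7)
The plan is to follow the scheme of \cite{SlavinZ}: reduce the $n$-dimensional inequality with the Garcia-type norm $\|\cdot\|_{\scriptscriptstyle \Kerr}$ to the sharp one-dimensional inequality already established (Theorem~\ref{Rtheorem} for $r<2$ together with \cite{SVZ} for $r\ge 2$) via a Bellman function of three variables and a monotonicity argument driven by the semigroup associated to $\Kerr$.

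I would first bring in the Bellman function $\Bell(x_1,x_2,x_3;\eps)$ coming from the 1D proof (Section~\ref{const} and \cite{SVZ}), defined on the natural domain where $x_1,x_2,x_3$ stand for the averages of $\vf,|\vf|^p,\vf^2$ subject to the constraint $x_3-x_1^2\le\eps^2$, with $\eps$ playing the role of the $\BMO$-bound. This function satisfies $\Bell(x_1,|x_1|^p,x_1^2;\eps)=|x_1|^r$ on the diagonal, the sharp size estimate $\Bell\le C(p,r)^r\,x_2\,\eps^{r-p}$ (up to the modification needed for nonzero $x_1$), and the ``martingale concavity'' property attached to the 1D problem.

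The core step is then to form, for $\vf\in L^p(\R^n)$, the triple $X(y,t)=\bigl(\vf_{\scriptscriptstyle\Kerr}(y,t),(|\vf|^p)_{\scriptscriptstyle\Kerr}(y,t),(\vf^2)_{\scriptscriptstyle\Kerr}(y,t)\bigr)$, which lies in the domain of $\Bell$ with $\eps=\|\vf\|_{\scriptscriptstyle\Kerr}$ by the very definition of the Garcia-type norm, and to show that $\Bell(X(y,t))-(|\vf|^r)_{\scriptscriptstyle\Kerr}(y,t)$ is \emph{super}-caloric in the heat-kernel case (super-harmonic on $\R^n\times\R_+$ in the Poisson case). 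For $\Kerr=\KerH$, a direct computation using $(\partial_t-\Delta_y)f_{\scriptscriptstyle \KerH}=0$ for each component reduces this to a pointwise Hessian inequality $-\mathrm{tr}(D^2\Bell\cdot M)\ge 0$, where $M$ is the positive semidefinite Gram matrix built from the spatial gradients of the three extensions; a Cauchy--Schwarz contraction identifies this with precisely the one-dimensional infinitesimal concavity already verified for $\Bell$ in the 1D proof. The Poisson case is reduced to the heat case by subordination.

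Once the super-caloric (respectively, super-harmonic) property is in place, the maximum principle together with the boundary equality $\Bell(X(y,0^+))=|\vf(y)|^r$ and the appropriate decay of the extensions as $t\to\infty$ yields $\Bell(X(y,t))\ge (|\vf|^r)_{\scriptscriptstyle\Kerr}(y,t)$ for all $(y,t)\in\R^n\times\R_+$. Integrating this in $y$, using that the $\Kerr$-convolution preserves the integral, and invoking the size bound on $\Bell$ (proportional to $\eps^{r-p}\cdot(|\vf|^p)_{\scriptscriptstyle\Kerr}$), one obtains $\|\vf\|_{L^r(\R^n)}^r\le C(p,r)^r\|\vf\|_{L^p(\R^n)}^p\|\vf\|_{\scriptscriptstyle\Kerr}^{r-p}$, as desired. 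I expect the main difficulty to lie in the middle step: the 1D Bellman function from Section~\ref{const} has an intricate piecewise structure with several gluing surfaces, so verifying the super-caloric/super-harmonic inequality pointwise across these interfaces --- and carrying out the Cauchy--Schwarz contraction so that the resulting condition matches exactly the 1D concavity already at hand --- will require delicate but dimension-free book-keeping.
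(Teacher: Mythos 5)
Your overall scheme is the right one --- bring in the three-variable Bellman function, exploit the property (coming from Lemma~5.2 in~\cite{SlavinZ}) that a locally concave $G$ on $\Omega_\eps^3$ satisfies $G\big(\vf_{\scriptscriptstyle\Kerr},(\vf^2)_{\scriptscriptstyle\Kerr},(|\vf|^p)_{\scriptscriptstyle\Kerr}\big)\geq\big(G(\vf,\vf^2,|\vf|^p)\big)_{\scriptscriptstyle\Kerr}$, apply it with $G=\Bell^+_{p,r;\eps}$, and finish with a size estimate on $\Bell^+$. But the endgame has a genuine gap that you half-acknowledge and then elide with ``up to the modification needed for nonzero~$x_1$''.

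The size estimate you invoke, $\Bell^+_{p,r;\eps}(x)\leq C(p,r)^r\,\eps^{r-p}\,x_3$, is \emph{only} true when $x_1=0$: it comes from $C^r=\sup_{x_2,x_3}\Bell^+_{p,r;1}(0,x_2,x_3)/x_3$ in~\eqref{240801}. For $x_1\neq 0$ the bound fails badly --- on the skeleton $\Bell^+(t,t^2,|t|^p)=|t|^r$ while $x_3=|t|^p$, so the ratio is $|t|^{r-p}\to\infty$. In your chain, the first coordinate of $X(y,t)$ is $\vf_{\scriptscriptstyle\Kerr}(y,t)$, which is not zero, so you cannot pass from $\Bell(X(y,t))\geq(|\vf|^r)_{\scriptscriptstyle\Kerr}(y,t)$ to the $L^p$--$L^r$ bound by the quoted size estimate. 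The ``modification'' is exactly the missing idea: fix $(y,t)$ and apply the concavity lemma not to $\vf$ but to $\phi=\vf-\vf_{\scriptscriptstyle\Kerr}(y,t)$, whose $\Kerr$-extension vanishes at the chosen point, so that one lands on the slice $x_1=0$ where the size estimate is valid. Because that subtracted constant depends on $(y,t)$, your final step of ``integrating in $y$ and using that the $\Kerr$-convolution preserves the integral'' also does not close the argument: $\int\!\int|\vf(\tilde y)-\vf_{\scriptscriptstyle\Kerr}(y,t)|^r\Kerr_t(y-\tilde y)\,d\tilde y\,dy$ does not simplify to $\|\vf\|_{L^r}^r$. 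The paper instead multiplies the resulting pointwise inequality by the normalizer $Q(t)$ (so that $Q(t)\Kerr_t\to 1$) and lets $t\to+\infty$, using $\vf\in L^p(\mathbb R^n)\Rightarrow\vf_{\scriptscriptstyle\Kerr}(y,t)\to 0$ to make the subtracted constant vanish in the limit. Incorporating the centering $\phi=\vf-\vf_{\scriptscriptstyle\Kerr}(y,t)$ and replacing the $y$-integration by the $t\to\infty$ normalized limit would repair your argument.
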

The following estimate is proved in~\cite{SlavinZ}:
\eqlb{eq171002}{
\|\vf\|_{\scriptscriptstyle \KerH} \leq \tilde{C}  \sqrt{n}\,\|\vf\|_{\BMO(\mathbb{R}^n)}, 
}
where $\tilde C$ is an absolute constant, and the $\BMO$-norm is the ball-based one. This estimate together with Theorem~\ref{DimHeatTh} implies the following corollary.

\begin{Cor}\label{DimCor}
Let $n\in \mathbb{N}$. If $1\leq p<r<\infty$\textup, then the following inequality holds\textup:
\eqlb{eq171003}{
\|\vf\|_{L^r(\mathbb{R}^n)}^{\phantom{\frac{p}{r}}}\leq
C(p,r)\tilde{C} n^{\frac{r-p}{2r}} \|\vf\|_{L^p(\mathbb{R}^n)}^{\frac{p}{r}}\|\vf\|_{\BMO(\mathbb{R}^n)}^{1-\frac{p}{r}},
\qquad\vf\in  L^p(\mathbb{R}^n),
}
where $\BMO$-norm is the ball-based one.
\end{Cor}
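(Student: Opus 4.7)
The plan is to derive Corollary~\ref{DimCor} as an immediate consequence of Theorem~\ref{DimHeatTh} applied with the heat kernel $\Kerr=\KerH$, combined with the dimensional estimate~\eqref{eq171002} imported from~\cite{SlavinZ}. There is essentially no real analysis to carry out here: both non-trivial ingredients are already available, and the content of the statement is the arithmetic of composing the two bounds and identifying the correct dimensional exponent.

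First I would specialize Theorem~\ref{DimHeatTh} to $\Kerr=\KerH$, which gives
\begin{equation*}
\|\vf\|_{L^r(\mathbb{R}^n)}\le C(p,r)\,\|\vf\|_{L^p(\mathbb{R}^n)}^{p/r}\,\|\vf\|_{\scriptscriptstyle\KerH}^{1-p/r}
\end{equation*}
for every $\vf\in L^p(\mathbb{R}^n)$. Next I would raise~\eqref{eq171002} to the power $1-p/r$, obtaining
\begin{equation*}
\|\vf\|_{\scriptscriptstyle\KerH}^{1-p/r}\le \tilde C^{\,1-p/r}\,n^{(r-p)/(2r)}\,\|\vf\|_{\BMO(\mathbb{R}^n)}^{1-p/r},
\end{equation*}
where the dimensional exponent is produced by $(\sqrt n)^{1-p/r}=n^{(1-p/r)/2}=n^{(r-p)/(2r)}$. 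Substituting this into the previous display yields~\eqref{eq171003}, modulo replacing $\tilde C^{\,1-p/r}$ by $\tilde C$; since the exponent lies in $(0,1)$, the simple convention $\tilde C\ge 1$ (which we may impose without loss of generality by enlarging the absolute constant in~\eqref{eq171002} if needed) guarantees $\tilde C^{\,1-p/r}\le\tilde C$, and the stated inequality follows.

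The only point that could conceivably be called an obstacle is cosmetic, namely justifying the absorption of the fractional power of $\tilde C$ into $\tilde C$ itself; if one preferred, the corollary could equivalently be stated with the constant $C(p,r)\,\tilde C^{\,1-p/r}\,n^{(r-p)/(2r)}$, which is still an absolute constant depending only on $p$ and $r$ times the correct power of $n$. No further analytic input is required: the dimensional factor $n^{(r-p)/(2r)}$ originates entirely in the $\sqrt{n}$ of~\eqref{eq171002}, weighted by the BMO exponent $1-p/r$ of the multiplicative inequality.
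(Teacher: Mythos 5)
Your argument is correct and is exactly the one the paper intends: the paper simply states that~\eqref{eq171002} together with Theorem~\ref{DimHeatTh} implies the corollary, and your proof fills in that composition, including the identification $(\sqrt n)^{1-p/r}=n^{(r-p)/(2r)}$ and the harmless absorption of $\tilde C^{\,1-p/r}$ into $\tilde C$.
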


In the next section we repeat the definition of the main Bellman function as well as 
the definition and the properties of some auxiliary Bellman functions after~\cite{SVZ} 
for convenience of the reader.

The Bellman function method allows to obtain various estimates (sometimes sharp) in analysis and probability reducing the infinite dimensional extremal problems to  finite dimensional ones by using some auxiliary function, which now is usually called the Bellman function.
For more information regarding this method and its application  we refer the reader to the monographs~\cite{O} and~\cite{VV}.

\section{Optimization problem}
\label{s2}

We introduce the main characters. These are the following Bellman functions:

\eqlb{041102}{
\Bell_{p,r;\eps}^+(x_1,x_2,x_3)=\sup\Big\{\av{|\vf|^r}I\colon\|\vf\|_{\BMO(I)}\le\eps,\ 
\av{\vf}{I}=x_1,\ \av{\vf^2}{I}=x_2,\  \av{|\vf|^p}{I}=x_3\Big\}.
}
and
\eqlb{041103}{
\Bell_{p,r;\eps}^-(x_1,x_2,x_3)=\inf\Big\{\av{|\vf|^r}I\colon\|\vf\|_{\BMO(I)}\le\eps,\ 
\av{\vf}{I}=x_1,\ \av{\vf^2}{I}=x_2,\  \av{|\vf|^p}{I}=x_3\Big\}.
}

We say that $\vf$ is a \emph{test function} for the point $x\in\R^3$ if 
\eqlb{170401}{
\|\vf\|_{\BMO(I)} \leq \eps,\ \av{\vf}{I}=x_1,\  \av{\vf^2}{I}=x_2,\ \av{|\vf|^p}{I}=x_3.
}

The main purpose of this paper is to find explicit formulas for~$\Bell_{p,r;\eps}^\pm$. We state this 
result by referring to the formulas appearing in the forthcoming sections. After rather long preliminaries
we define our Bellman candidates: $B_1(x;p,r,\eps)$ in Subsection~\ref{B1_def} and $B_2(x;p,r,\eps)$ 
in Subsection~\ref{B2_def}. After that we will prove the following theorem.

\begin{Th}\label{BellmanTheorem}
In the case $(r-2)(r-p)>0$ the function~$\Bell_{p,r;\eps}^+$ coincides with~$B_1$ and the 
function~$\Bell_{p,r;\eps}^-$ coincides with $B_2$. In the case $(r-2)(r-p)<0$ the function~$\Bell_{p,r;\eps}^+$ 
coincides with~$B_2$ and the function~$\Bell_{p,r;\eps}^-$ coincides with $B_1$.
\end{Th}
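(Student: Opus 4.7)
The overall plan is the standard two-step Bellman function scheme: produce the candidates $B_1$ and $B_2$, show that each one is respectively a majorant and a minorant of the true Bellman functions on the admissible domain, and then establish sharpness by exhibiting extremal test functions that trace out the foliation underlying the candidate. The case split in the theorem reflects the fact that the candidates themselves switch roles according to the sign of $(r-2)(r-p)$: in the regime $r>2$ (matching the case considered in \cite{SVZ}) the supremum Bellman function is the ``upper'' candidate $B_1$, while for $p<r<2$ the extremal configurations flip and $\Bell^+_{p,r;\eps}$ coincides with $B_2$ instead.

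First I would fix the common domain $\Omega_\eps$ of admissible points $x=(x_1,x_2,x_3)$ --- those triples for which at least one test function in the sense of \eqref{170401} exists --- and check that both $B_1$ and $B_2$ are well-defined on $\Omega_\eps$, that they agree with $|x_1|^r$ on the diagonal boundary $x_2=x_1^2$ (where the only test function is the constant~$x_1$), and that they match the obstacle on the BMO-saturation portion of $\partial\Omega_\eps$. The core step is then the ``main inequality'': for every splitting $x = \alpha x^- + (1-\alpha)x^+$ that corresponds to cutting $I$ into two subintervals with $x^\pm\in\Omega_\eps$, one must show
\eqlb{planineq}{
B_1(x)\ge \alpha B_1(x^-)+(1-\alpha)B_1(x^+),
}
together with the reverse inequality for $B_2$ (in the regime $r>2$, with the two roles interchanged for $p<r<2$). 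On each smooth piece this reduces to a homogeneous Monge--Amp\`ere type condition, supplemented by compatibility across the interfaces.

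Granted \eqref{planineq}, the one-sided inequality $B_1\ge\Bell^+_{p,r;\eps}$ follows by Bellman induction: iteratively halve~$I$, apply \eqref{planineq} at every node of the resulting dyadic tree, and pass to the limit using Lebesgue differentiation so that the leaf averages converge to $|\vf|^r$ pointwise; taking the supremum over test functions $\vf$ yields the claim. The matching lower bound is produced from the explicit foliation of $\Omega_\eps$ used to define~$B_1$: along each extremal curve the main inequality is an equality, and one can glue together piecewise monotone test functions whose averages trace a prescribed extremal trajectory, so that $\av{|\vf|^r}{I}$ approaches $B_1(x)$. The same argument, with inequalities reversed, handles $B_2=\Bell^-_{p,r;\eps}$, and swapping the roles of the candidates disposes of the case $p<r<2$.

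The principal obstacle is the main inequality \eqref{planineq} together with its dual, because in the new range $p<r<2$ the candidates have a much richer piecewise structure than in \cite{SVZ}: the domain $\Omega_\eps$ is partitioned into several regions --- linearity domains, chordal domains, and transition zones built from ``long chords'' --- glued along curves, and one must verify both the pointwise differential condition on every smooth piece and the concavity/convexity across every pair of admissible interface configurations. Equally delicate is constructing the foliation so that every interior point of $\Omega_\eps$ is covered by an extremal trajectory whose endpoints remain in $\Omega_\eps$; the nontrivial interaction between the $L^p$, $L^2$, and $\BMO$ constraints makes this bookkeeping heavy and is precisely what forces the implicit description of the sharp constant $C(p,r)$ alluded to in the introduction.
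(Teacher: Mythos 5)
Your plan follows essentially the same route as the paper: check the boundary data and the local concavity/convexity of the candidates (Theorem~\ref{080510}, with the sign of $(r-2)(r-p)$ deciding which candidate is concave and which is convex), deduce one inequality from the characterization of $\Bell^{\pm}_{p,r;\eps}$ as the extremal locally concave/convex functions with the skeleton boundary values (Proposition~\ref{BasicProperties}, which is what your Bellman induction amounts to), and obtain the reverse inequality from optimizers traced along the foliation (Section~\ref{SecOptim}). The only minor difference is that on the two-dimensional leaves $R(v)$ and $F_0(h)$ the paper does not rely on explicit optimizers but uses linearity of $B_2$ on each leaf together with local concavity of $\Bell^{+}_{p,r;\eps}$ and coincidence at the leaf vertices.
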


The function $B_1$ was found in~\cite{SVZ}, where the statement of Theorem~\ref{BellmanTheorem} was proved in
the part concerning $B_1$. Our aim here is to find much more complicated function $B_2$ and to
prove the rest of Theorem~\ref{BellmanTheorem}.

To describe the function~$B_i$, we will need two auxiliary Bellman 
functions~$\Bell^\pm_{p;\eps}\colon\R^2\to\R\cup\{\pm\infty\}$ defined as follows:
\begin{align}\label{OldBellman}
\Bell^+_{p;\eps}(x_1,x_2)&=\sup\Set{\av{|\vf|^p}{I}}{\|\vf\|_{\BMO(I)}\leq\eps,\;\av{\vf}{I}=
x_1,\;\av{\vf^2}{I}=x_2},
\\
\Bell^-_{p;\eps}(x_1,x_2)&=\inf\Set{\av{|\vf|^p}{I}}{\|\vf\|_{\BMO(I)}\leq\eps,\;
\av{\vf}{I}=x_1,\;\av{\vf^2}{I}=x_2}.
\label{OldBellman-}
\end{align}
The latter two functions were studied in detail in~\cite{SV2}. We survey these results, since they 
will play an important role in our study. 

\subsection{Description of~$\Bell_{p;\eps}^\pm$}
\label{2dimBell}

The domain of both functions~$\Bell^\pm_{p;\eps}$ is
\begin{equation*}
\Omega^2_\eps = \Set{(x_1,x_2) \in \R^2}{x_1^2 \leq x_2 \leq x_1^2+\eps^2}.
\end{equation*}
By the domain of a Bellman function we mean the set of~$x$
such that the set of test functions~$\vf$ (i.\,e., the functions over which we optimize in formulas~\eqref{OldBellman} 
and~\eqref{OldBellman-}) is non-empty for this~$x$ (compare with~\eqref{170401}). Both functions also 
satisfy the boundary condition~$\Bell^\pm_{p;\eps}(t,t^2)=|t|^p$, $t\in \R$. 
From now on we omit the index~$\eps$ in the notation of domains and functions. 

To describe~$\Bell^\pm$, we need some auxiliary functions.
For $s\geq 1$ define
\eqlb{mp+}{
m_s(u) = \frac{s}{\eps}\int_{u}^{+\infty}\!\!\! e^{(u-t)/\eps} t^{s-1} dt, \qquad u \geq 0;
}
\eqlb{mp-}{
k_s(u) = \ \frac{s}{\eps}\int_\eps^u e^{(t-u)/\eps} t^{s-1} dt, \qquad u \geq \eps.
}

For any $u \in \R$ we denote the segment connecting the points~$(u,u^2)$ 
with~$(u+\eps, (u+\eps)^2+\eps^2)$ by~$S_+(u)$ and the segment connecting~$(u,u^2)$ 
with~$(u-\eps, (u-\eps)^2+\eps^2)$ by~$S_-(u)$. Note that these 
segments touch upon the upper boundary of~$\Omega^2$, that is the parabola~$x_2=x_1^2+\eps^2$. 
For any $(x_1,x_2) \in \Omega^2$ there exist unique $u_\pm = u_\pm(x_1,x_2) \in \R$ such that 
$(x_1,x_2) \in S_{\pm}(u_\pm)$, $u_+\leq x_1 \leq u_-$, see Figure~\ref{fig161101}. Namely,
$$
u_\pm=x_1\mp\Big(\eps-\sqrt{\eps^2+x_1^2-x_2}\Big).
$$

\begin{figure}[h]
    \centering
    \includegraphics[scale = 0.3]{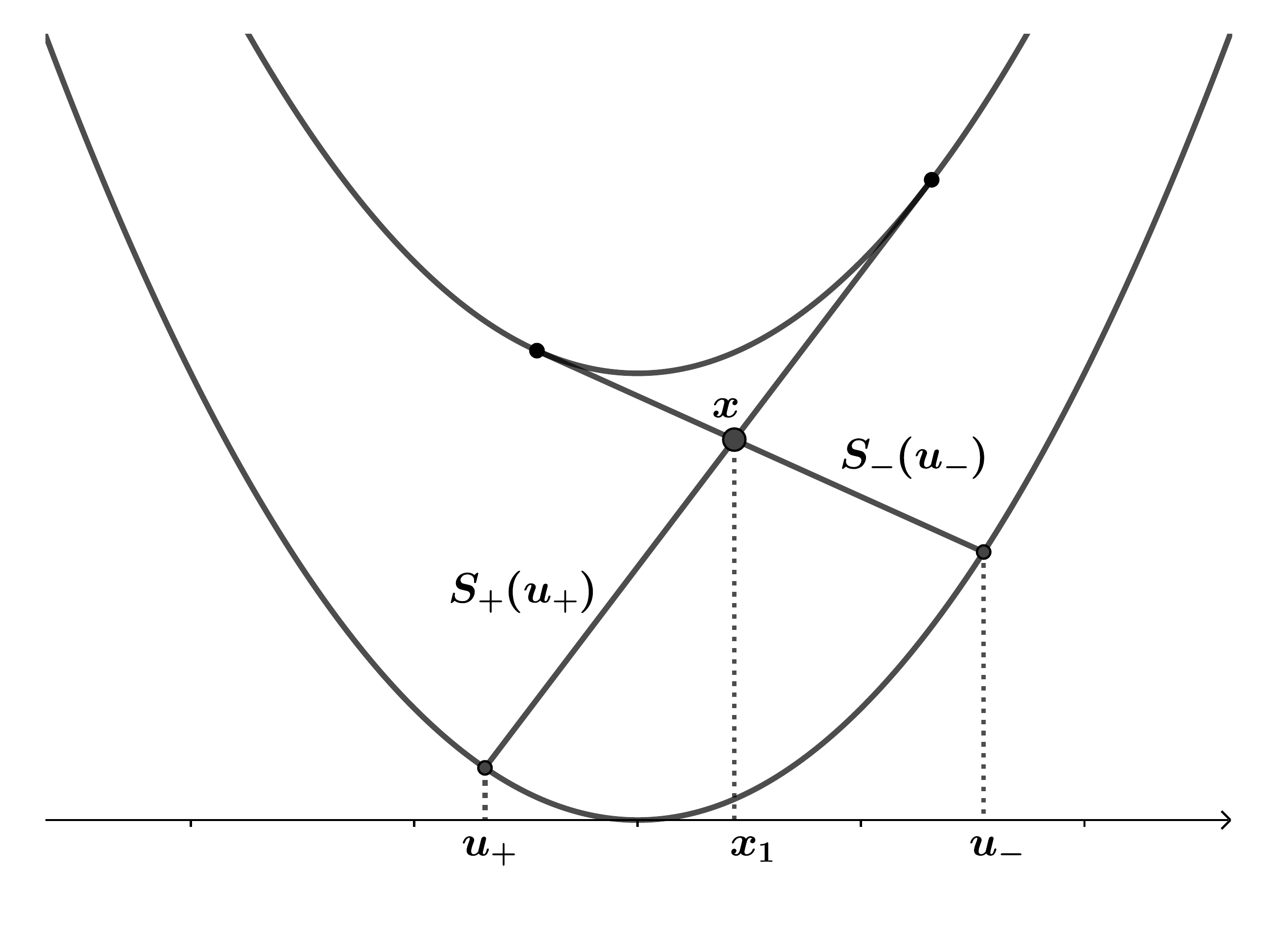}
    \caption{Left and right tangents passing through $x$}
    \label{fig161101}
\end{figure}

Define the function $\Am{p}$ on $\Omega^2$ in the following way. We put 
\eqlb{eqAm}{
\begin{aligned}
\Am{p}(x) =& u^p + m_p(u) (x_1-u), \qquad &x \in S_+(u), \qquad u\geq 0,\\
\Am{p}(x) =& |u|^p - m_p(|u|) (x_1-u),\qquad &x \in S_-(u), \qquad u \leq 0.
\end{aligned}
} 
In the curvilinear triangle between the tangents~$S_-(0)$ and~$S_+(0)$, we set
\eqlb{eqAmang}{
\Am{p}(x) = \frac{m_p(0)}{2\eps} x_2, \qquad |x_1|\leq \eps,\quad 2\eps |x_1| \leq x_2 \leq x_1^2+\eps^2.
} 
Formulas~\eqref{eqAm} and~\eqref{eqAmang} define the function $\Am{p}$ on the entire domain $\Omega^2$. 
Note that $\Am{p}$ is even with respect to~$x_1$ and $C^1$-smooth for $p>1$.

Define the function $\Ak{p}$ on $\Omega^2$ as follows. We put 
\eqlb{eqAk}{
\begin{aligned}
\Ak{p}(x) =& u^p + k_p(u) (x_1-u), \qquad &x \in S_-(u), \qquad u &\geq \eps,\\
\Ak{p}(x) =& |u|^p - k_p(|u|) (x_1-u),\qquad &x \in S_+(u), \qquad u &\leq -\eps.
\end{aligned}
} 
In the domain~$x_2 \leq \eps^2$, we set
\eqlb{eqAkcup}{
\Ak{p}(x_1,x_2) = x_2^{p/2}, \qquad x_1^2\leq x_2 \leq \eps^2.
}
Formulas~\eqref{eqAk} and~\eqref{eqAkcup} define the function $\Ak{p}$ on the entire domain $\Omega^2$. 
This function is also even with respect to~$x_1$ and $C^1$-smooth for $p\ge1$.

Now we are ready to describe the functions $\Bell^\pm$:
\eqlb{}{
\Bell^+_p = 
\begin{cases}
\Am{p}, & \text{if}\quad 2\leq p<\infty,
\\
\Ak{p}, & \text{if}\quad 1 \leq p\leq 2,
\end{cases}
\qquad
\text{and}
\qquad
\Bell^-_p = 
\begin{cases}
\Ak{p}, & \text{if}\quad 2\leq p<\infty,
\\
\Am{p}, & \text{if}\quad 1 \leq p\leq2.
\end{cases}
}

Here we collect some useful relations for derivatives of the functions $m_s$ and~$k_s$:
\eqlb{mpp+}{
m_s''(u)=\frac{s(s-1)(s-2)}{\eps}\int_{u}^{+\infty}\!\!\! e^{(u-t)/\eps} t^{s-3} dt,
}
\eqlb{mpp-}{
k_s''(u)=\ s(s-2)\eps^{s-3}e^{(\eps-u)/\eps}+\frac{s(s-1)(s-2)}{\eps}\int_{\eps}^{u}e^{(t-u)/\eps} t^{s-3} dt,
}
\eqlb{m-diff}{
-\eps m_s'(u)+ m_s(u)= su^{s-1}, \qquad \eps k_s'(u)+ k_s(u)= su^{s-1},
}
\eqlb{mpdiffnew}{
\eps \Big(m_s^{(\ell+1)} + k_s^{(\ell+1)}\Big)=m_s^{(\ell)}-k_s^{(\ell)},\qquad\ell\geq 0,\\
}
where the notation $g^{(k)}$ means the $k$-th derivative of $g$.

\subsection{Simple properties of the optimization problem}

The domain of the functions~$\Bell_{p,r;\eps}^\pm$ introduced in~\eqref{041102} and in~\eqref{041103} 
is described in terms of the functions~$\Bell^\pm_{p;\eps}$ from~\eqref{OldBellman} and~\eqref{OldBellman-}.
\begin{Prop}\label{Prop121201}
The set
\eqlb{DefOmega}{
\Omega_\eps^3=\Set{x\in\R^3}{(x_1,x_2)\in\Omega^2_\eps,\;
x_3\in\big[\Bell^-_{p;\eps}(x_1,x_2),\Bell^+_{p;\eps}(x_1,x_2)\big]}
}
is the domain of the Bellman functions~$\Bell^\pm_{p,r;\eps}$.
\end{Prop}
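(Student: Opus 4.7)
The plan is to establish both inclusions separately. The inclusion ``domain of $\Bell^\pm_{p,r;\eps}$ is contained in $\Omega^3_\eps$'' should follow directly from the definitions: if $\vf$ is a test function in the sense of~\eqref{170401} for a triple $x = (x_1, x_2, x_3)$, then $\vf$ is simultaneously a test function for the two-dimensional optimization problems~\eqref{OldBellman}--\eqref{OldBellman-} at the point $(x_1, x_2)$. This forces $(x_1, x_2) \in \Omega^2_\eps$, and moreover $\Bell^-_{p;\eps}(x_1, x_2) \leq \av{|\vf|^p}{I} = x_3 \leq \Bell^+_{p;\eps}(x_1, x_2)$, placing $x$ in $\Omega^3_\eps$.

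For the reverse inclusion, I would fix $x \in \Omega^3_\eps$ and construct a test function realising it. First, using the definitions of $\Bell^\pm_{p;\eps}$, one can select (possibly after a small approximation) test functions $\vf^+$ and $\vf^-$ for the two-dimensional problem at $(x_1, x_2)$ whose $p$-moments $\av{|\vf^\pm|^p}{I}$ lie arbitrarily close to $\Bell^\pm_{p;\eps}(x_1, x_2)$; the concrete form of such near-extremals is available from~\cite{SV2} and from the formulas for $\Am{p}$, $\Ak{p}$ recalled in Subsection~\ref{2dimBell}. Next, I would interpolate between $\vf^+$ and $\vf^-$ via a splitting construction: partition $I$ into two subintervals of relative lengths $\lambda$ and $1-\lambda$ and place appropriately rescaled copies of $\vf^+$ and $\vf^-$ on them. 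Since both pieces carry identical first two moments $(x_1, x_2)$, the resulting function $\vf_\lambda$ automatically satisfies $\av{\vf_\lambda}{I} = x_1$ and $\av{\vf_\lambda^2}{I} = x_2$, while its $p$-moment is the convex combination $\lambda \av{|\vf^+|^p}{I} + (1-\lambda) \av{|\vf^-|^p}{I}$. Letting $\lambda$ range over $[0,1]$ and invoking the intermediate value theorem should then produce a $\vf_\lambda$ whose $p$-moment equals the prescribed $x_3$.

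The hard part will be verifying that $\|\vf_\lambda\|_{\BMO(I)} \leq \eps$. Within each of the two pieces the bound is immediate by scale invariance of $\BMO$, but on any subinterval $J = J_1 \cup J_2$ straddling the splitting point one picks up the additional variance contribution
\[
\alpha(1-\alpha)\bigl(\av{\vf_\lambda}{J_1} - \av{\vf_\lambda}{J_2}\bigr)^2, \qquad \alpha = |J_1|/|J|,
\]
which a priori could push the $\BMO$ norm above $\eps$. Taming this cross term is the technically delicate point; I would address it by exploiting the explicit structure of the extremals $\vf^\pm$ coming from~\cite{SV2}, in particular the way their local averages traverse the tangent segments $S_\pm(u)$ appearing in Subsection~\ref{2dimBell}, or alternatively by a limiting argument in which near-extremals with a bit of slack in the $\BMO$ constraint absorb the cross-term contribution. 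Once this bound is in place, the interpolating function $\vf_\lambda$ witnesses $x$ as a point of the domain, completing the reverse inclusion.
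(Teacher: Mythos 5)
Your forward inclusion is correct and, as you say, essentially forced by the definitions. The reverse inclusion, however, contains a genuine gap which you have correctly located but not closed. The obstacle is exactly the cross term you display: for $J=J_1\cup J_2$ straddling the splice point, the Bellman point $\bigl(\av{\vf_\lambda}{J},\av{\vf_\lambda^2}{J}\bigr)$ is the length-weighted convex combination of the Bellman points of $J_1$ and $J_2$; each of those lies in $\Omega^2_\eps$, but $\Omega^2_\eps$ is not convex (its upper boundary is the parabola $x_2=x_1^2+\eps^2$), so the combination can escape. Neither of your proposed remedies resolves this. The ``slack'' argument fails quantitatively: a chord joining two points of the curve $x_2=x_1^2+(\eps-\delta)^2$ at horizontal separation $|a-b|$ rises above that curve by $(a-b)^2/4$, so keeping the chord inside $\Omega^2_\eps$ forces $|a-b|\leq 2\sqrt{2\eps\delta-\delta^2}\to 0$; but the horizontal separation of the relevant Bellman points of $J_1$ and $J_2$ is determined by $\vf^\pm$ and $J$ and does not shrink with $\delta$ (if anything the near-extremals spread out more as $\delta\to 0$). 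The ``exploit the explicit structure'' route also needs a new idea: the delivery curves of the $\Am{p}$- and $\Ak{p}$-optimizers each travel along the upper parabola but over \emph{disjoint} parameter ranges (roughly $[x_1+d,\infty)$ for one and ending at $x_1-d$ for the other, with $d=\sqrt{x_1^2+\eps^2-x_2}$), so a chord between generic points of the two delivery curves overshoots by about $d^2$, which is not small unless $(x_1,x_2)$ is already close to the lower parabola.

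For context, the paper does not prove the proposition itself; it defers to Proposition~2.2 of~\cite{SVZ}. So your task was to supply an independent argument, and the concatenation-plus-intermediate-value strategy is a reasonable idea to try, but as written it is a sketch with the key analytic step missing. What is actually needed is either a direct construction of a test function for each $x\in\Omega^3_\eps$ (as the explicit optimizer families in Section~\ref{SecOptim} suggest: a single one-parameter family of monotone step-plus-log functions whose delivery curve stays admissible, with the parameter sweeping the full $x_3$-range), or a concatenation in which the delivery curve of the assembled function is verified to remain in $\Omega^2_\eps$ via the tangent-line criterion of~\cite{ISVZ}; simply citing IVT on the $p$-moment does not suffice without that verification.
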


We note that for $x \notin \Omega_\eps^3$ there are no test functions and we definitely have 
$\Bell^{\pm}_{p,r}(x) = \mp\infty$ in this case. On the other hand, for any $x \in \Omega_\eps^3$  
there is a test function $\vf$. The proof of Proposition~\ref{Prop121201} can be found in~\cite{SVZ} 
(Proposition~2.2 there).

In what follows by the skeleton of $\Omega_{\eps}^3$ we mean the set of points $\{(t, t^2, |t|^p) \colon t \in \R \}$. 

A function~$G\colon \omega \to \R\cup\{\pm \infty\}$, where~$\omega \subset \R^d$ is 
an arbitrary set, is called \emph{locally concave}/\emph{convex} if for any segment~$\ell \subset \omega$, 
the restricted function~$G|_\ell$ is concave/convex. 

We collect standard facts concerning Bellman functions of such kind. 

\begin{Prop}\label{BasicProperties}
\begin{enumerate}
\item[\textup{1)}] The functions~$\Bell^{\pm}_{p,r}$ satisfy the boundary conditions on the skeleton 
of~$\Omega_\eps^3$\textup:
\eqlb{eqBCsceleton}{
\Bell^{\pm}_{p,r}(t,t^2,|t|^p) = |t|^r, \qquad t \in \R.
}
\item[\textup{2)}] The function~$\Bell_{p,r}^+$ is locally concave and the
function~$\Bell_{p,r}^-$ is locally convex on~$\Omega_{\eps}^3$.
\item[\textup{3)}] The function~$\Bell_{p,r}^+$ is the pointwise minimal among all locally concave 
on~$\Omega_{\eps}^3$ functions~$B$ that satisfy the boundary condition~\eqref{eqBCsceleton}. The 
function~$\Bell_{p,r}^-$ is the pointwise maximal among all locally convex
on~$\Omega_{\eps}^3$ functions~$B$ that satisfy the boundary condition~\eqref{eqBCsceleton}.
\end{enumerate}
\end{Prop}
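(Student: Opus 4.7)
My plan is to establish the three items via standard Bellman-function reasoning. For item (1), any test function for $(t, t^2, |t|^p)$ has zero variance, hence $\vf \equiv t$ almost everywhere, and $\av{|\vf|^r}{I} = |t|^r$, so both the supremum and the infimum collapse to this common value.

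For item (2) I would use a concatenation argument. Fix $x^\pm \in \Omega_\eps^3$ with $[x^-, x^+] \subset \Omega_\eps^3$ and $\alpha \in [0, 1]$. Choose near-extremal test functions $\vf^\pm$ for $x^\pm$, rescale them onto adjacent subintervals $I_\pm$ of a common interval $I$ with $|I_-|/|I| = \alpha$, and concatenate to obtain $\vf$ on $I$. By linearity of averages, $\vf$ lands at $x = \alpha x^- + (1-\alpha) x^+$, and $\av{|\vf|^r}{I} = \alpha\av{|\vf^-|^r}{I_-}+(1-\alpha)\av{|\vf^+|^r}{I_+}$. Optimizing over $\vf^\pm$ produces the midpoint concavity or convexity inequality. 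The delicate step is $\|\vf\|_{\BMO(I)} \leq \eps$: for subintervals contained in one $I_\pm$ it is inherited from $\vf^\pm$, whereas for subintervals straddling the splitting point the variance reduces to a convex combination of points in $\Omega^2_\eps$, and this bound is precisely what is encoded by the assumption $[x^-, x^+] \subset \Omega_\eps^3$.

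Item (3) is the extremal property, proved by a dyadic martingale argument. Let $B$ be locally concave on $\Omega_\eps^3$ with $B \geq |t|^r$ on the skeleton, and let $\vf$ be a test function for $x$ on $I$. Writing $\mathcal{D}_n$ for the dyadic partition of $I$ into $2^n$ equal subintervals and $x_J = (\av{\vf}{J}, \av{\vf^2}{J}, \av{|\vf|^p}{J}) \in \Omega_\eps^3$, iterated application of local concavity at each splitting step---together with the verification that the segment $[x_{J_-}, x_{J_+}]$ stays inside $\Omega_\eps^3$ by the same BMO-admissibility input as in item (2)---yields
\begin{equation*}
B(x) \geq \sum_{J \in \mathcal{D}_n}\frac{|J|}{|I|}\, B(x_J).
\end{equation*}
Lebesgue differentiation sends $x_J$ to $(\vf(t), \vf(t)^2, |\vf(t)|^p)$ for almost every $t$, so $B(x_J) \to |\vf(t)|^r$ by the skeleton hypothesis. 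Taking $n \to \infty$ and then the supremum over $\vf$ delivers $B(x) \geq \Bell_{p,r}^+(x)$; the convex/lower case is dual. The main obstacle in both (2) and (3) will be the variance bookkeeping for straddling subintervals, a standard but nontrivial step analogous to the domain characterization in Proposition~\ref{Prop121201}.
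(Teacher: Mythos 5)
Your outline matches the standard Bellman-function template (collapse at skeleton points, concatenation for concavity, dyadic splitting for minimality), and item~(1) is correct. The paper itself does not reprove the Proposition; it cites Proposition~2.3 of~\cite{SVZ} and observes that the convex case is symmetric. However, there is a genuine gap in the way you handle the single step on which items~(2) and~(3) both hinge. You assert that for a straddling subinterval $J$ the needed variance bound ``is precisely what is encoded by the assumption $[x^-,x^+]\subset\Omega_\eps^3$.'' This is false. For such a $J$ the point $(\av{\vf}{J},\av{\vf^2}{J})$ is a convex combination of $(\av{\vf^-}{J'_-},\av{(\vf^-)^2}{J'_-})$ and $(\av{\vf^+}{J'_+},\av{(\vf^+)^2}{J'_+})$ for \emph{subintervals} $J'_\pm\subset I_\pm$ near the splitting point; these are not $x^\pm$, and the hypothesis constrains only the single segment joining the full-interval averages $x^\pm$, telling you nothing about segments joining these other points.

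Indeed the naive concatenation can fail. With $\eps=1$, take $\vf^-$ on $[0,1]$ equal to $0$ on $[0,1-\delta]$ and $-2$ on $[1-\delta,1]$, and $\vf^+$ on $[0,1]$ equal to $2$ on $[0,\delta]$ and $0$ on $[\delta,1]$. For small $\delta$ both are admissible test functions, $x^\pm=(\pm 2\delta,\,4\delta,\,2^p\delta)$, and one checks the whole segment $[x^-,x^+]$ lies in $\Omega_1^3$; yet the concatenated function jumps from $-2$ to $2$ across the join, and the straddling interval $[1-\delta,1+\delta]$ has variance $4>1$. Thus an additional device is indispensable -- rearranging $\vf^\pm$, inserting a transitional constant stretch, or passing through the circle -- and what item~(3) in fact requires is the (true, but nontrivial) lemma that for a \emph{genuine} test function $\vf$ and an adjacent splitting $J=J_-\cup J_+$ the segment $[x_{J_-},x_{J_+}]$ stays inside $\Omega_\eps^3$. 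You flag this as ``standard but nontrivial,'' which is a fair characterization, but the mechanism you propose for discharging it does not work, so the argument as written does not close.
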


Proposition~\ref{BasicProperties} is proved in~\cite{SVZ} for the case of concave functions 
(see Proposition 2.3 there). The proof of the other case is literally the same. 

In view of Proposition~\ref{BasicProperties}, to find the Bellman function $\Bell^{\pm}_{p,r}$, it suffices to construct a $C^1$-smooth 
function~$B\colon\Omega^3_{\eps}\to\R$ such that
\begin{enumerate}
\item[(i)] the function $B$ is locally concave/convex on~$\Omega^3_\eps$;
\item[(ii)] the function $B$ fulfills the boundary conditions~\eqref{eqBCsceleton};
\item[(iii)] for any point~$x \in \Omega^3_\eps$ there is a function~$\vf_x\in\BMO(I)$ such that 
\eqlb{eqphi_x}{
\|\vf_x\|_{\BMO(I)}\leq\eps,\ \av{\vf_x}{I}=x_1,\ \av{\vf_x^2}{I}=x_2,\ \av{|\vf_x|^p}{I}=x_3,
\ \av{|\vf_x|^r}{I}=B(x);
}	
\end{enumerate}
If all of the above requirements hold, then~$B =\Bell_{p,r}^+$ for locally concave function $B$ and
$B =\Bell_{p,r}^-$ for locally convex function $B$. Indeed, the inequality 
$B(x)\ge\Bell^+_{p,r}(x)$, $x\in\Omega^3_\eps$, follows from  conditions (i) and (ii), 
and the third statement of Proposition~\ref{BasicProperties}. The reverse inequality 
$B(x)\le\Bell^+_{p,r}(x)$ for $x\in\Omega^3_\eps$ follows from
condition~(iii) and the definition of the Bellman function~$\Bell_{p,r}^+$. 
We will provide more details in Section~\ref{SecOptim} and Section~\ref{sec_proofthm21}.

A function~$\vf_x$ satisfying~\eqref{eqphi_x} is called an optimizer for~$B$ at~$x$.

\section{Foliation. Definitions}
\label{SectFoliation}

Our aim is to construct the function~$B$ on~$\Omega_\eps^3$ described at the end of the previous section. 
First, we build a foliation of the domain, i.\,e., we split the whole domain into a union of one-dimensional and two-dimensional sets, where our function is linear and its gradient is constant. These subsets of linearity we call {\it leaves} of the foliation.
Having a foliation we can reconstruct the Bellman function using the 
boundary values. Recall the construction performed in~\cite{SVZ} for the Bellman function $B_1$. We had 
there only two-dimension leaves of foliation. We repeat here description of this foliation.
 
Domain~$\Omega_\eps^3$ was split into three subdomains $\Xi\cii+$, $\Xi\cii0$, $\Xi\cii-$ with foliation of 
different types: 
\eqlb{defXi}{
\begin{aligned}
\Xi\cii0 &=\Set{x\in\Omega_\eps^3}{|x_1|\leq 2\eps,\ 
x_2\geq4\eps|x_1|-3\eps^2\!,\ (p-2)\big(x_3-\eps^p-\frac{x_2-\eps^2}{4\eps}m_p(\eps)\big)\geq 0},
\\
\Xi\cii+ &= \{x \in \Omega_\eps^3 \setminus \Xi\cii0 \colon x_1>0\},
\\
\Xi\cii- &= \{x \in \Omega_\eps^3 \setminus \Xi\cii0 \colon x_1<0\}.
\end{aligned}
}

\subsection{Foliation far from the origin}

Since our problem is symmetric with respect to the change of sign of the first coordinate, we will assume 
that $x_1>0$.  We consider the subdomain $\Xi\cii+$. Let us denote 
by $U(v)$ the point of the skeleton with the first coordinate $v$: 
$U(v) = (v,v^2,v^p)$. Together with the point $U(v)$ we consider two points $W_\pm(v)$ that belong 
to the upper or lower boundary of $\Omega_\eps^3$ and are the second endpoints of tangents from 
the point~$U(v)$ to the parabolic part of the boundary, namely
\eqlb{Upm}{
\begin{split} 
W_+(v)=&\Big(v+\eps,(v+\eps)^2+\eps^2,\Am{p}\big(v+\eps,(v+\eps)^2+\eps^2\big)\Big)
\\
=&\Big(v+\eps,(v+\eps)^2+\eps^2, v^p+\eps m_p(v)\Big),
\\
W_-(v)=&\Big(v-\eps,(v-\eps)^2+\eps^2,\Ak{p}\big(v-\eps,(v-\eps)^2+\eps^2\big)\Big)
\\
=&\Big(v-\eps,(v-\eps)^2+\eps^2, v^p-\eps k_p(v)\Big).
\end{split}
}
Note that the point $W_+(v)$ is on the upper boundary if $p>2$ and on the lower boundary if $p<2$.
The converse position is taken by $W_-(v)$.
Note that the projection of the segments $[U(v),W_\pm(v)]$ onto the $x_1x_2$-plane
 are the segments $S_\pm(v)$. Here and in what follows, by $[{\rm A},{\rm B}]$ we denote the straight line segment with the endpoints ${\rm A}$ and ${\rm B}$.

For $v\geq\eps$ we consider the two-dimensional plane that passes through~$U(v)$ 
and the points $W_{\pm}(v)$. Its equation is
\eqlb{planeeq1}{
x_3 = v^p + \frac{m_p-k_p}{4\eps}\cdot\big(x_2-x_1^2+(x_1-v)^2\big)+\frac{m_p + k_p}{2}\cdot(x_1-v).
}
Here and in what follows, we omit the argument of~$m_p$ and~$k_p$ if this does not lead to ambiguity.

Let $T(v)$ be the intersection of $\Omega^3_\eps$ and the triangle with the vertices $U(v)$, $W_-(v)$, 
$W_+(v)$. So, $T(v)$ is a curvilinear triangle with linear sides on $\partial\Omega_\eps^3$: 
$[U(v),W_+(v)]$ is the graph of $\Am{p}$ restricted on $S_+(v)$ and 
$[U(v),W_-(v)]$ is the graph of $\Ak{p}$ restricted on $S_-(v)$. 
That is for $p>2$ the segment $[U,W_+]$ 
lies on the upper boundary of $\Omega_\eps^3$, 
for $p<2$ it 
lies on the lower boundary. For the segment $[U,W_-]$ we have the opposite situation. This difference of the cases $p<2$ and $p>2$ explains rather cumbersome definition of the domain 
$\Xi\cii0$: depending on $p$ we have to consider the points either above or below the triangle $T(\eps)$. 
The domain $\Xi\cii+$ is foliated by such curvilinear triangles $T(v)$, $v \geq \eps$.

This was the description of the foliation of $\Xi\cii+$ for $B_1$ found in~\cite{SVZ}. We start with some heuristic arguments 
concerning possible foliation for $B_2$. We need to find another 
foliation with the same boundary condition: on the boundary we assume the same extremal lines $[U(v),W_+(v)]$ 
and $[U(v),W_-(v)]$. The unique possibility to have a two-dimensional leaf of linearity is already 
used in the described foliation, therefore, we have to look for one dimensional extremals. All extremals 
have to start from the skeleton $\{x=(v,v^2,|v|^p)\}$ and cannot go transversal to the free parabolic 
boundary $\{x=(x_1,x_1^2+\eps^2,x_3)\}$. Therefore, there are only two possibilities: either a fan 
$\FL(u)$ of left tangents to parabolic boundary from a point $U(u-\eps)$, or a fan $\FR(u)$ of 
right tangents to parabolic boundary from a point $U(u+\eps)$. The tangency points of each such fan 
lie on the line $(u,u^2+\eps^2,\;\cdot\;)$. We would like to pay attention of the reader that symbols~$\mathrm L$ 
and~$\mathrm R$ mean left and right tangents, what means that these lines lies on the left and on the right 
of the tangency point, correspondingly. But if we look from their common point $U(u\pm\eps)$ the 
situation is opposite: the fan $\FL(u)$ lies on the right from $U(u-\eps)$ (its projection on $(x_1,x_2)$-plane 
is $S_+(u-\eps)$) and the fan $\FR(u)$ lies on the left of $U(u+\eps)$ (its projection on $(x_1,x_2)$-plane 
is $S_-(u+\eps)$). We illustrate all of this by Figure~\ref{3_u_infty}. 
\begin{figure}[!h]
\includegraphics[scale=0.5]{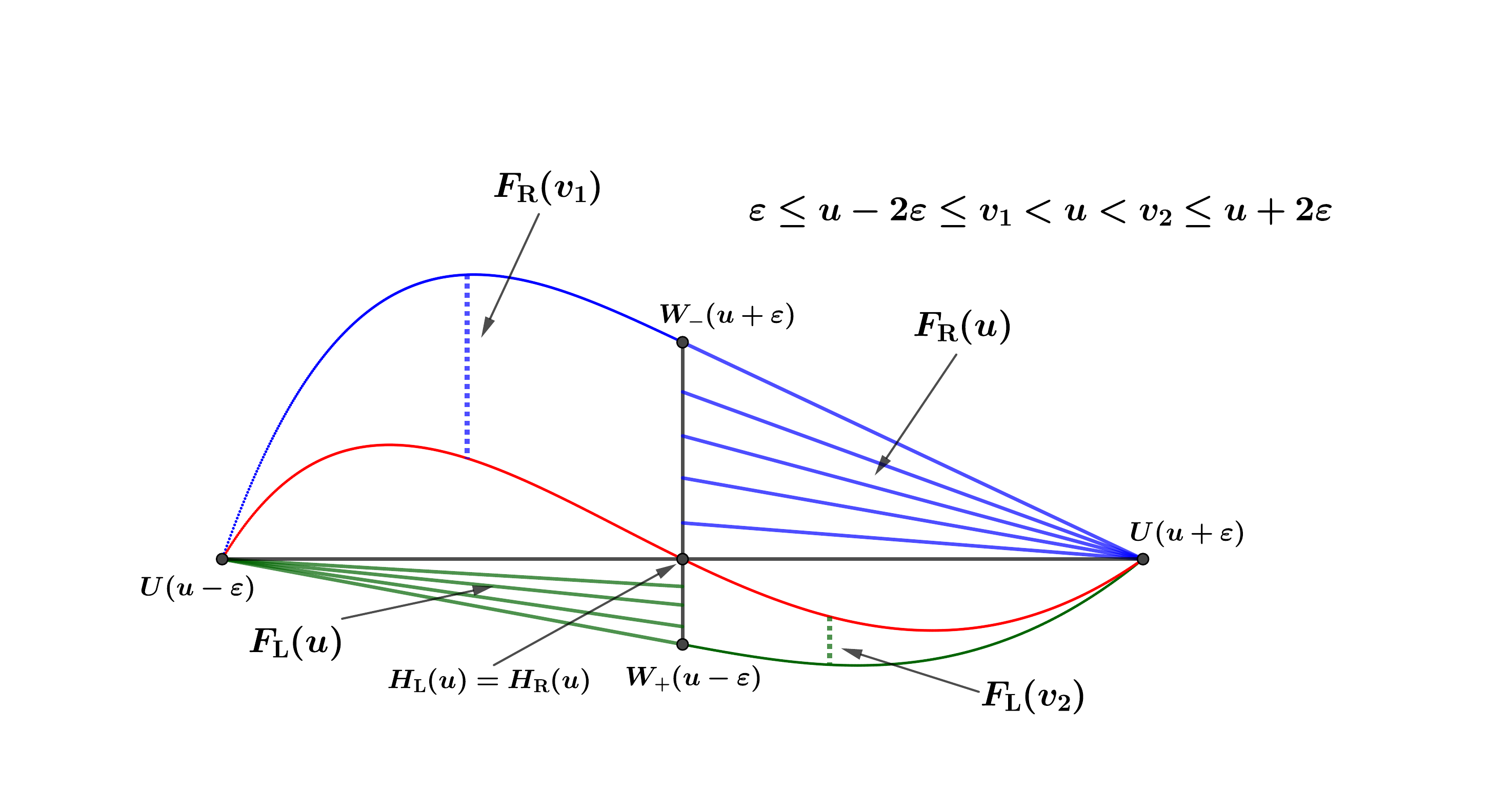}
\caption{Traces of extremals on the plane $P(u)$ far from the origin for $1<p<2$.}
\label{3_u_infty}
\end{figure}
To visualize our construction, it is more natural to consider what happens 
in a tangent plane to the parabolic boundary. Let us intersect our domain $\Omega_\eps^3$ by the 
two-dimensional plane $2ux_1-x_2-u^2+\eps^2=0$, we call this intersection $P(u)$. It touches the parabolic boundary along the vertical 
line $x_1=u$, $x_2=u^2+\eps^2$.  The picture of $P(u)$ on 
Figure~\ref{3_u_infty} illustrates the situation for the case $1<p<2$ and $u>3\eps$. The chord $[U(u-\eps),U(u+\eps)]$ has a very large slope for large values of $u$, but we place
it horizontally, i.\,e., this picture presents an affine transform of the true graph. 

We see here that $P(u)$ has two linear parts of the boundary. These are two straight line segments: the extremal \hbox{$[U(u-\eps),W_+(u-\eps)]$} 
on the upper boundary and the extremal $[U(u+\eps),W_-(u+\eps)]$ on the lower boundary. The fans 
$\FR(u)$ and $\FL(u)$ start from these two extremals. The ``long chord'' $[U(u-\eps),U(u+\eps)]$ is 
a natural second border for both fans. The curvilinear line on the Figure~\ref{3_u_infty} connecting $U(u-\eps)$ and $U(u+\eps)$ is 
the trace of the other long chords on this plane. That is the set of points of intersection of the chords 
$[U(u-\eps+t),U(u+\eps+t)]$, $-2\eps<t<2\eps$, with the plane $P(u)$. In each plane $P(u+t)$ there are
own fans $\FR(u+t)$ and $\FL(u+t)$ that intersect our plane $P(u)$ along some
vertical segments between the boundary point and the corresponding long chord. So, we have described the extremals for
all points of $P(u)$ except of ones between the long chord $[U(u-\eps),U(u+\eps)]$ and the curvilinear line connecting its endpoints. It appears that the extremals for the points between these two lines are short chords of the form 
$$[U(a), U(b)], \quad 0 < b-a < 2\eps,$$
passing through
them, i.\,e., the chords that do not touch the parabolic boundary. In Lemma~\ref{080508} below it will be proved that 
for each such point there exists exactly one chord passing through it. Observe that for $x \in [U(u-\eps),U(u+\eps)]$ we have the following relation:
\begin{equation*}
x_3 = \begin{cases} \displaystyle\frac{\Delta_-(x_1-\Delta_+)^p+\Delta_+(x_1+\Delta_-)^p}{2\eps}, & \text{ if } \  u \leq x_1 \leq u + \eps,\\
\rule{0pt}{23pt}
\displaystyle \frac{\Delta_-(x_1+\Delta_+)^p+\Delta_+(x_1-\Delta_-)^p}{2\eps}, & \text{ if  } \  u - \eps \leq x_1 \leq u,
\end{cases}
\end{equation*}
where $\Delta_\pm\df\eps\pm d$ and $d\df\sqrt{x_1^2+\eps^2-x_2}$.

Now, instead of one domain $\Xi\ci+$ generated by the triangles $T(u)$, we have three domains: $\Xi\cii{\mathrm L+}$ 
is the domain foliated by the fans $\FL(u)$ of left tangents, $\Xi\cii{\mathrm R+}$ is the domain foliated by 
fans $\FR(u)$ of right tangents, and $\Xi\cii{\mathrm{ch}+}$ is the domain between two preceding ones 
foliated by chords. So, approximately, we can describe 
these domains as follows:
\begin{align}
\notag
\Xi\cii{\mathrm L+} = &\Big\{x\in\Omega_\eps^3\colon x_1>0\ \text{and sufficiently far from 0,}
\\
\notag
&\quad(p-2)\frac{\Delta_-(x_1+\Delta_+)^p+\Delta_+(x_1-\Delta_-)^p}{2\eps}\leq(p-2)x_3
\leq(p-2)\Am{p}(x_1,x_2)\Big\};
\\
\label{defXi1}
\Xi\cii{\mathrm R+} = &\Big\{x\in\Omega_\eps^3\colon x_1>0\ \text{and sufficiently far from 0,}
\\
\notag
&\quad(p-2)\Ak{p}(x_1,x_2)\leq
(p-2)x_3\leq(p-2)\frac{\Delta_-(x_1-\Delta_+)^p+\Delta_+(x_1+\Delta_-)^p}{2\eps}\Big\};
\\
\notag
\Xi\cii{\mathrm{ch}+} = &\Big\{x\in\Omega_\eps^3\colon x_1>0\ \text{and sufficiently far from 0,}
\\
\notag
&\quad(p-2)\frac{\Delta_-(x_1\!-\!\Delta_+)^p+\Delta_+(x_1\!+\!\Delta_-)^p}{2\eps}\leq
(p-2)x_3\leq(p-2)\frac{\Delta_-(x_1\!+\!\Delta_+)^p+\Delta_+(x_1\!-\!\Delta_-)^p}{2\eps}\Big\}.
\end{align}

It is clear that we have domains $\Xi\cii{\mathrm R-}$ , $\Xi\cii{\mathrm L-}$, and 
$\Xi\cii{\mathrm{ch}-}$ symmetrical to $\Xi\cii{\mathrm L+}$ , $\Xi\cii{\mathrm R+}$, and 
$\Xi\cii{\mathrm{ch}+}$ respectively. 
And now the problem arises how to connect these domains in a neighbourhood 
of the origin and specify what does it mean ``sufficiently far'' in these formulas. Furthermore, near 
the origin, the description of all these three domains should be slightly changed due to the influence 
of two new specific domains that appear here. The traces of all these domains near the origin can be seen on Figures~\ref{2_u_3}, \ref{1_u_2}, and \ref{0_u_1}.

\begin{figure}[!h]
\includegraphics[scale=0.5]{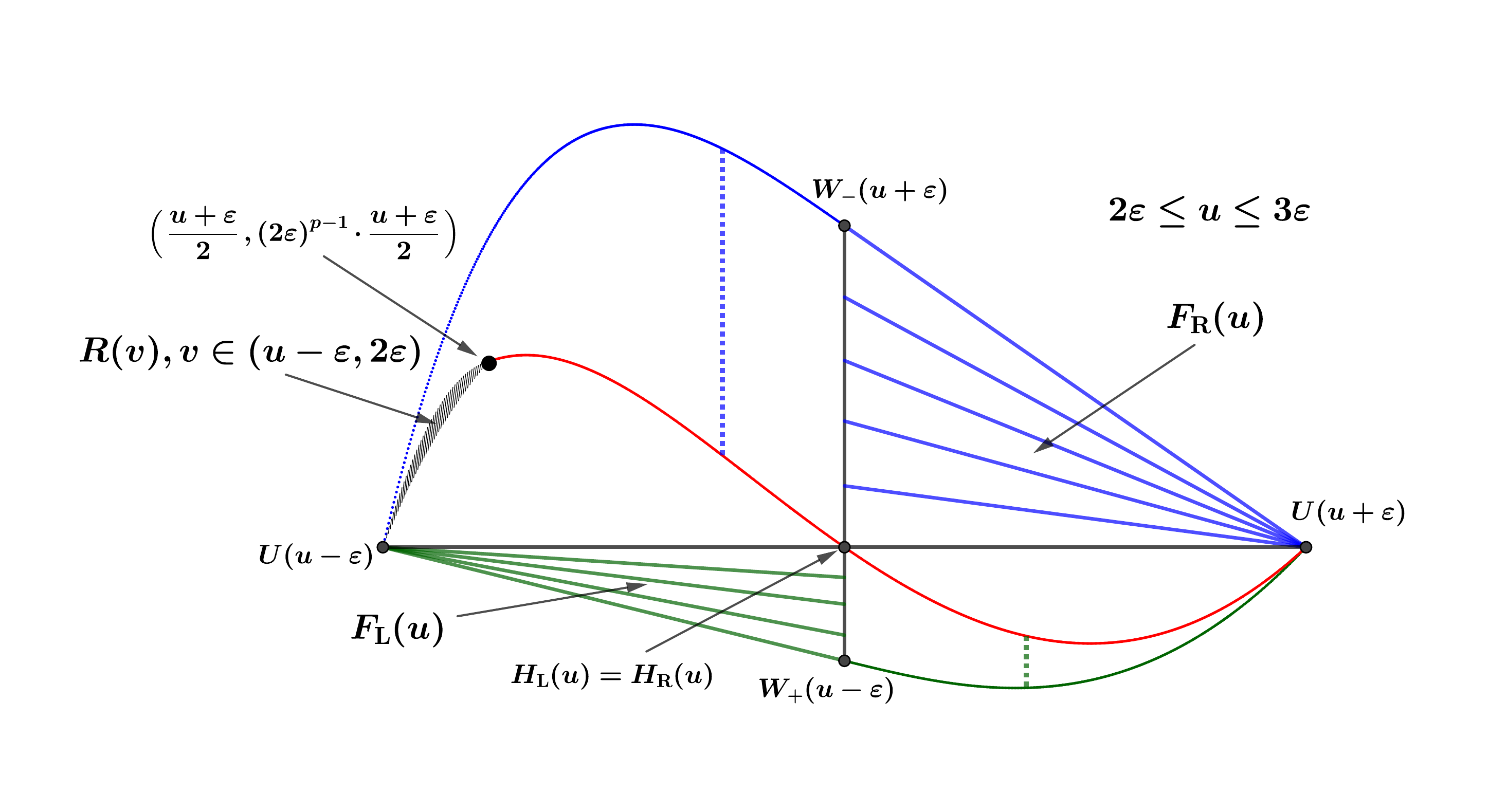}
\caption{Traces of extremals on the plane $P(u)$ for $2\eps<u<3\eps$ and $1<p<2$.}
\label{2_u_3}
\end{figure}
\begin{figure}[!h]
\includegraphics[scale=0.5]{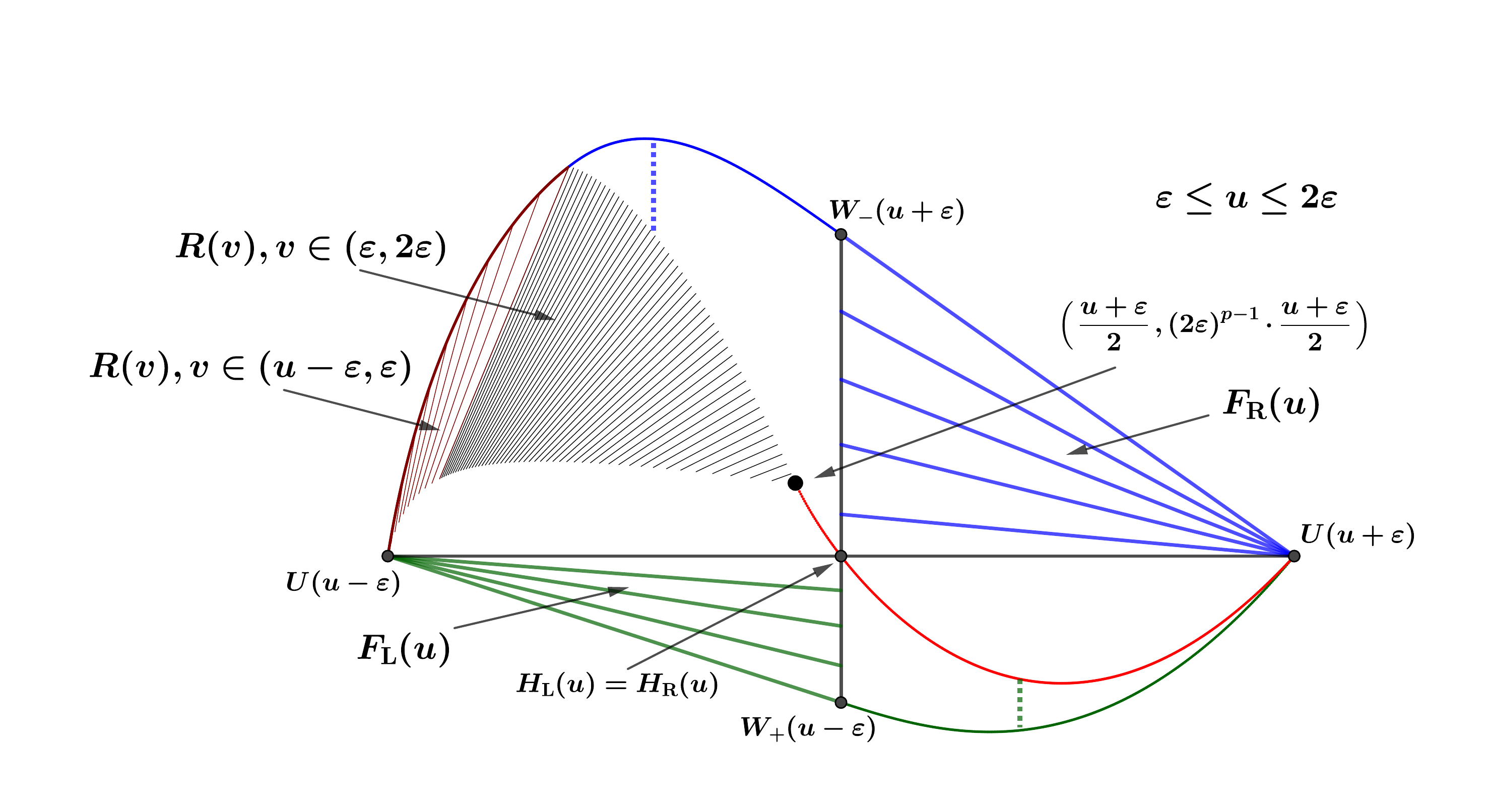}
\caption{Traces of extremals on the plane $P(u)$ for $\eps<u<2\eps$ and  $1<p<2$.}
\label{1_u_2}
\end{figure}
\begin{figure}[!h]
\includegraphics[scale=0.3]{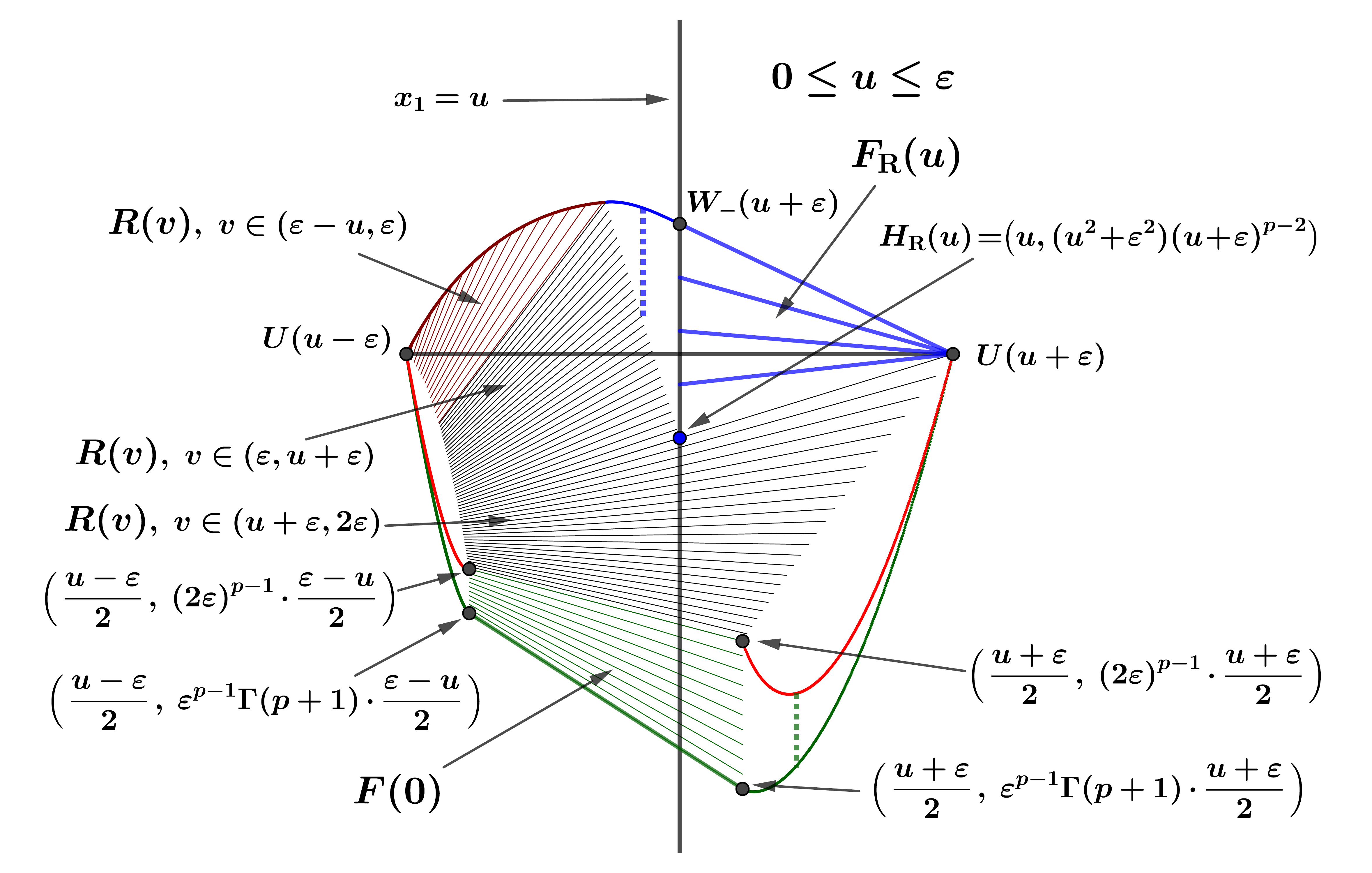}
\caption{Traces of extremals on the plane $P(u)$ for $0<u<\eps$ and  $1<p<2$.}
\label{0_u_1}
\end{figure}

\subsection{Foliation near the origin}
\label{FolNearZero}

In~\cite{SVZ} the domain $\Xi\cii0$ for $B_1$ (see~\eqref{defXi}) was obtained in a rather natural way: when $u$   
decreases, the triangle $T(u)$ and the symmetrical triangle $T(-u)$ get closer and closer, and
at the moment $u=\eps$ they touch each other at the point $(0,\eps^2,\eps^p)$. At this moment they 
lie in the same plane and two of their sides form a single chord $[U(-\eps),U(\eps)]$. 
\footnote{For the reader familiar with~\cite{ISVZ} we give the following analogy. We know that in two-dimensional cases,
when two angles simultaneously touch a cup, they form a birdie.
Here in the three-dimensional case, we get a family of ``birdies''.}
For each $u$, $u<\eps$, we have a two-dimensional domain of 
linearity in the form of curvilinear trapezoid $\tilde T(u)$ with three straight line sides $[U(-u),U(u)]$, 
$[U(u),W_+(u)]$, and $[U(-u),W_-(-u)]$.

Now, for $B_2$, the situation is much less clear, the question is how to gather nearly the origin three domains from the right and 
three domains from the left. It appears that the domains foliated by the chords~$\Xi\cii{\mathrm{ch}+}$ and $\Xi\cii{\mathrm{ch}-}$
are separated automatically, they have the only common point --- the origin. It is rather easy to build an interlacing 
domain between $\Xi\cii{\mathrm L+}$ and $\Xi\cii{\mathrm R-}$. Each of these domains has 
a border fan $\FL(\eps)$ and $\FR(-\eps)$. They have the common point $U(0)$. If we take 
an extremal from $\FL(\eps)$ and the symmetric extremal from $\FR(-\eps)$ it is natural 
to consider a curvilinear triangle between these two extremals as a domain of linearity. In such a way 
we obtain at the origin a fan of two-dimensional extremals. We denote this fan simply by $F(0)$ and 
it is just the interlacing domain between $\Xi\cii{\mathrm L+}$ and~$\Xi\cii{\mathrm R-}$.

The construction of the interlacing domain between $\Xi\cii{\mathrm L-}$ and $\Xi\cii{\mathrm R+}$ is more 
difficult. It appears that the connecting foliation consists of two-dimensional linearity domains 
$R(v)$, $0<v\leq2\eps$, describing as follows: the leaf $R(v)$ is the induced (in $\Omega^3_\eps$) 
convex hull\footnote{We refer to a set $X$, $X\subset\Omega$, as the induced convex set in $\Omega$ if for any pair $x,y \in X$ such that the straight line segment $[x,y]$ lies in $\Omega$, it also lies in $X$. We say that $X$ is an induced convex hull of $X_1$ in $\Omega$ if it is the minimal by inclusion induced convex subset of $\Omega$ containing $X_1$.} of the points $U(0)$ and $U(\pm v)$.
\begin{figure}[!h]
\begin{center}
\includegraphics{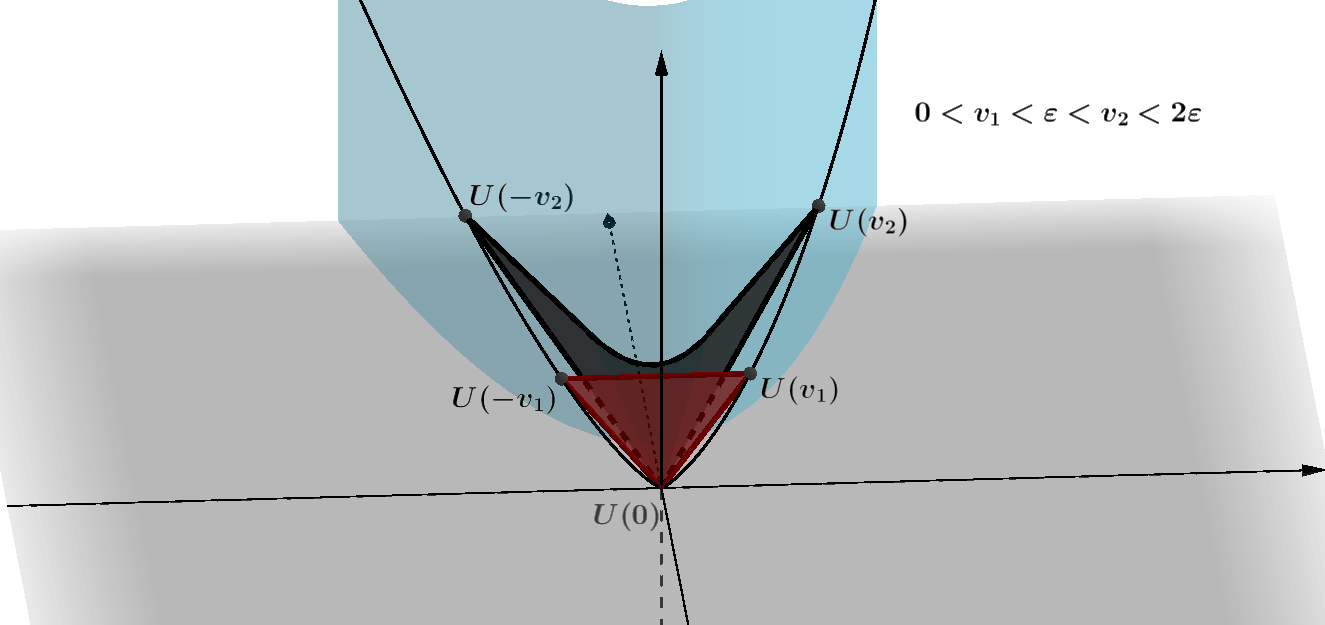}
\caption{Two examples of two-dimensional extremals $R(v)$.}
\label{R(v)_domain}
\end{center}
\end{figure}
Two types of linearity domains $R(v)$ are presented on Fig.~\ref{R(v)_domain}:
\begin{itemize}
\item if $0<v\leq\eps$, then $R(v)$ is simply the convex hull of these points, i.\,e., 
it is the triangle with these vertices;
\item if $\eps\leq v\leq2\eps$, then $R(v)$ is the induced convex hull of these points, 
i.\,e., it is the plane curvilinear triangle with two sides being the chords $[U(0),U(\pm v)]$, and 
the third side consist of two symmetrical line straight segments starting from $U(\pm v)$ and 
tangent to the parabolic boundary at the points with the first coordinates $\pm(v-\eps)$ and 
a curve on the parabolic boundary connecting these two tangency points.  
\end{itemize}

Now we give a formal description of all subdomains.

The fan of two-dimensional extremals foliates the domain
\eqlb{F0}{
F(0)=\Big\{x\in\Omega_\eps^3\colon|x_1|\leq\eps,\ 2\eps|x_1|\leq x_2\leq x_1^2+\eps^2,
\quad (2-p)x_3\leq(2-p)(2\eps)^{p-2}x_2\Big\}.
}

We denote by $R$ the domain foliated by two-dimensional leaves $R(v)$. To describe this domain
analytically, we first split the underlying domain $\Omega^2_\eps$ into the following subregions:
\eqlb{041101}{
\begin{aligned}
\omega_0=&\Big\{x\in\Omega_\eps^2\colon 2\eps|x_1|\le x_2\le\eps^2\Big\};
\\
\omega_1=&\Big\{x\in\Omega_\eps^2\colon x_2\ge\eps^2,\ x_2\ge2\eps x_1,\ 0\le x_1\le\eps\Big\};
\\
\omega_2=&\Big\{x\in\Omega_\eps^2\colon x_2\le\eps^2,\ x_2\le2\eps x_1\Big\};
\\
\omega_3=&\Big\{x\in\Omega_\eps^2\colon \eps^2\leq x_2\leq2\eps x_1\Big\};
\\
\omega_4=&\Big\{x\in\Omega_\eps^2\colon x_1\ge\eps,\ x_2\ge2\eps x_1\Big\};
\end{aligned}
}
and $\omega_{-i}=\Big\{x=(x_1,x_2)\colon (-x_1,x_2)\in\omega_i\Big\}$.
\begin{figure}[!h]
\begin{center}
\includegraphics[width=0.7\textwidth]{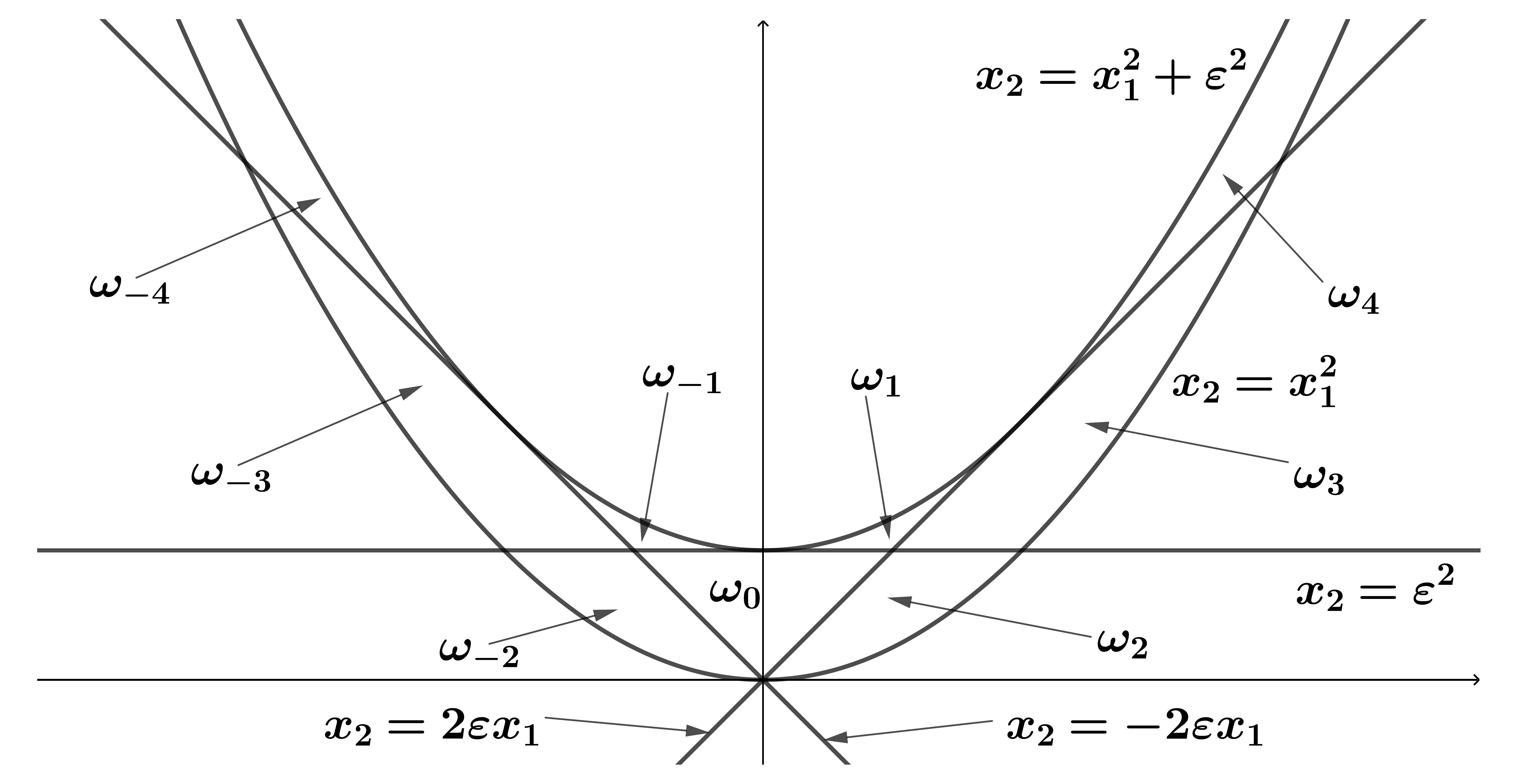}
\caption{Splitting of $\Omega^2_\eps$ into subdomains $\omega_i$.}
\label{omega_domains}
\end{center}
\end{figure}

The projection of the domain $R$ to the first two coordinates covers all subregions $\omega_i$
except $\omega_{\pm4}$.
\eqlb{R}{
R=\bigcup_{i=-3}^{i=3} R_i\,,
}
where
\begin{align}
\label{111201}
R_0=&\Big\{x\colon (x_1,x_2)\in\omega_0,\ 
(2-p)(2\eps)^{p-2}x_2\leq(2-p)x_3\leq(2-p)x_2^{\frac p2}\Big\};
\\
\label{111202}
R_{\pm1}=&\Big\{x\colon (x_1,x_2)\in\omega_{\pm1},\
(2-p)(2\eps)^{p-2}x_2\leq(2-p)x_3\leq(2-p)(|x_1|+\Delta_-)^{p-2}x_2\Big\};
\\
\label{111203}
R_{\pm2}=&\Big\{x\colon (x_1,x_2)\in\omega_{\pm2},\
(2-p)x_2^{p-1}|x_1|^{2-p}\leq(2-p)x_3\leq(2-p)x_2^{\frac p2}\Big\};
\\
\label{111204}
R_{\pm3}=&\Big\{x\colon (x_1,x_2)\in\omega_{\pm3},\ 
(2-p)x_2^{p-1}|x_1|^{2-p}\leq(2-p)x_3\leq(2-p)(|x_1|+\Delta_-)^{p-2}x_2\Big\}.
\end{align}
As before, we use the notation $\Delta_\pm = \eps\pm d$ and $d = \sqrt{x_1^2+\eps^2-x_2}$. The different parts of $R$ have different neighbours in vertical direction: $R_0$ has common boundary with $F(0)$, 
$R_1$ lies between $F(0)$ and $\Xi\cii{\mathrm R+}$,~$R_2$ has common boundary with $\Xi\cii{\mathrm{ch}+}$, 
$R_3$ lies between $\Xi\cii{\mathrm{ch}+}$ and $\Xi\cii{\mathrm R+}$.
We will check in Lemma~\ref{070500} that indeed the domain $R$ is foliated by the leaves $R(v)$, $0\leq v\leq2\eps$.

Now we can complete the definition of the domains~\eqref{defXi1}:
\eqlb{eq080501}{
\begin{aligned}
\Xi\cii{\mathrm L+} =\; &\Big\{x\in\Omega_\eps^3\colon x_1\geq\Delta_-,\
(2-p)\Am{p}(x_1,x_2)\leq(2-p)x_3
\\
&\quad\leq(2-p)\frac{\Delta_-(x_1+\Delta_+)^p+\Delta_+(x_1-\Delta_-)^p}{2\eps}\Big\}\,.
\end{aligned}
}
In Lemma~\ref{080504} we will check that the domain $\Xi\cii{\mathrm L+}$ is foliated by the fans $\FL(u)$, 
$\eps\leq u<\infty$.

Near the origin the domain foliated by the right fans should be changed a bit:
\eqlb{eq080502}{
\begin{aligned}
\Xi\cii{\mathrm R+} =\ &
\Bigg\{
x\in\Omega_\eps^3\colon x_1\geq0,\ x_2\geq\eps^2,\ 
(p-2)\Ak{p}(x_1,x_2)\leq(p-2)x_3 
\\
&\quad\leq(p-2)
\begin{cases}
(x_1+\Delta_-)^{p-2}x_2,&\quad\text{if }\ x_1\leq\Delta_+,\rule[-10pt]{0pt}{10pt}
\\
\displaystyle
\frac{\Delta_-(x_1-\Delta_+)^p+\Delta_+(x_1+\Delta_-)^p}{2\eps},
&\quad\text{if }\ x_1\geq\Delta_+
\end{cases}
\Bigg\}.
\end{aligned}
}
In Lemma~\ref{080506} we will check that the domain $\Xi\cii{\mathrm R+}$ is foliated by the fans $\FR(u)$, 
$0\leq u<\infty$. 

The remaining subdomain
\eqlb{eq080503}{
\begin{aligned}
\Xi\cii{\mathrm{ch}+} =\ &\Bigg\{x\in\Omega_\eps^3\colon x_1\geq\Delta_-,\ 
(2-p)\frac{\Delta_-(x_1+\Delta_+)^p+\Delta_+(x_1-\Delta_-)^p}{2\eps}\leq(2-p)x_3
\\
&\qquad\leq(2-p)
\begin{cases}
x_2^{p-1}x_1^{2-p},&\quad\text{if }\ x_1\leq\Delta_+,\rule[-10pt]{0pt}{10pt}
\\
\displaystyle
\frac{\Delta_-(x_1-\Delta_+)^p+\Delta_+(x_1+\Delta_-)^p}{2\eps},
&\quad\text{if }\ x_1\geq\Delta_+
\end{cases}\Bigg\}
\end{aligned}
}
is foliated by the chords connecting two points of the skeleton.
In Lemma~\ref{080508} we will check that for every point $x$ from the domain $\Xi\cii{\mathrm{ch}+}$ there exists
a unique pair of non-negative numbers $a=a(x)$ and $b=b(x)$ such that $0\leq b-a\leq2\eps$ and the chord 
$[U(a),U(b)]$ passes through $x$.

In the latter two cases we have two different analytic expressions describing the domains, because the different
parts of the domains in~\eqref{eq080502} and~\eqref{eq080503}  have different neighbours. 

We collect the proofs of all the results concerning the described foliation in Section~\ref{FoliationProofs}.

\section{Construction of a Bellman candidate}
\label{Bellman_candidate_def}

We start with the simplest domain $\Xi\cii{\mathrm{ch}+}$ foliated by chords of the form $[U(a),U(b)]$. Since the
Bellman function is assumed to be linear on these chords, we can define a Bellman candidate $B$ by 
the formula
\eqlb{eq080509}{
B(x)=\frac{x_1-a}{b-a}b^r+\frac{b-x_1}{b-a}a^r.
}

It is also easy to find a Bellman candidate on the domain $R$ foliated by two-dimensional leaves 
$R(v)$, that are induced convex hulls of the points $U(\pm v)$ and the origin, because the linear function there is also determined completely by the values at these three 
points:
\eqlb{eq070502}{
B(x)=v^{r-2}x_2=x_3^\frac{r-2}{p-2}x_2^\frac{p-r}{p-2},\quad x\in R(v).
}

Formally, it is easy to define a candidate on the fan of triangles $F(0)$: the function there depends
only on $x_2$ and $x_3$ and is completely determined by the values on the sides of each triangle.
However, the values on the boundary fans $\FL(\eps)$ and $\FR(-\eps)$ should be obtained from the
domains $\Xi\cii{\mathrm L+}$ and $\Xi\cii{\mathrm R-}$, where it is not so simple to find an appropriate
Bellman candidate. Due to the symmetry, it is sufficient to consider only one of these domains, and
we start with $\Xi\cii{\mathrm L+}$.

\subsection{Bellman function in the domain $\Xi\cii{\mathrm L+}$}
\label{sec_def_psiL}
Recall that the domain $\Xi\cii{\mathrm L+}$ is foliated by the fans $\FL(u)$, $u\ge\eps$
(see Fig.~\ref{3_u_infty},~\ref{2_u_3}, and~\ref{1_u_2}). Recall that all the extremals of the fan 
$\FL(u)$ lie in the plane $P(u)$ and are tangent to the parabolic part of the boundary at the points 
$(u,u^2+\eps^2, \;\cdot\;)$. Recall that the plane $P(u)$ is determined by the equation 
\eqlb{190901}{
x_2=2ux_1+\eps^2-u^2.
}
Since we consider the left tangents, $u=u(x)\ge x_1$, i.\,e.,
\eqlb{021101}{
u=x_1+\sqrt{x_1^2+\eps^2-x_2}\,.
}
The tangent lines forming the fan $\FL(u)$ have the common endpoint $U(v)$, $v=u-\eps=x_1-\Delta_-$. 
The second endpoint of the extremal passing through $x$ will be denoted by $H(u,h)$, where $h=h(x)$ 
is determined by the third coordinate of this endpoint by the formula
\eqlb{210902}{
H(u,h)=(u,u^2+\eps^2,v^p+\eps m_p(v)+h).
}
This second endpoint runs over a segment $[W_+(v),\HL(u)]$ on the parabolic part of the boundary, where
\eqlb{210901}{
\HL(u)=[W_+(u-\eps),W_-(u+\eps)]\cap[U(u-\eps),U(u+\eps)]=\Big(u,u^2+\eps^2,\half\big((u+\eps)^p+(u-\eps)^p\Big),
}
and $W_\pm$ were defined in~\eqref{Upm}. 
This means that $h$ runs from $0$ till $\half\big((u+\eps)^p-(u-\eps)^p\big)-\eps m_p(u-\eps)$:
$$
H(u,0)=W_+(u-\eps),\qquad
H\Big(u,\half\big((u+\eps)^p-(u-\eps)^p\big)-\eps m_p(u-\eps)\Big)=\HL(u).
$$
We see that $\sign h=\sign(2-p)$, because the point $W_+(v)$ is on the lower boundary of the domain
if~$p<2$ and on the upper boundary if $p>2$.

Since $B$ has to be linear along the extremal $[U(v),H(u,h)]$, we write
\eqlb{eq3}{
B(x) = v^r+\KL(u,h)(x_1-v),\qquad x\in[U(v),H(u,h)],
}
where $\KL=\KL(u,h)$ is the unknown slope of this linear function. Now, we find the slope $\KL$
using the property of $\grad B$ to be constant along the extremal. First, we have to calculate
the partial derivatives of the variables $u$ and $h$.

The function $u=u(x)$ is defined by~\eqref{190901}, $v=u-\eps=x_1-\Delta_-$, whence
\eqlb{eq4}{
\diff{v}{x_2}=\diff{u}{x_2}=\frac1{2(x_1-u)};\qquad\quad\diff{v}{x_3}=\diff{u}{x_3}=0.
}
The function $h=h(x)$ is determined by the fact that the points $H(u,h)$, $x$, and $U(v)$ lie 
on the same straight line, i.\,e.,
\eqlb{eq5}{
\frac{x_1-v}{\eps}=\frac{x_3-v^p}{\eps m_p(v)+h}.
}
This yields
\eqlb{161001}{
h=\eps\Big(\frac{x_3-v^p}{x_1-v}-m_p(v)\Big),
}
and
\eqlb{291001}{
x_3=v^p+\Big(\frac h\eps+m_p(v)\Big)(x_1-v)=(u-\eps)^p+\Big(\frac h\eps+m_p(u-\eps)\Big)\Delta_-.
}

Since $v$ depends only on $x_1$ and $x_2$, we have
\eqlb{eq6}{
\diff{h}{x_3}=\frac{\eps}{x_1-v}; \qquad\quad
\diff{h}{x_2}\buildrel{\eqref{m-diff}}\over=\frac{h+\eps m_p'(v) (u-x_1)}{x_1-v}\cdot \diff{v}{x_2}.
}

Differentiating the function $B$ with respect to $x_3$, we obtain
\eqlb{eq7}{
\diff{B}{x_3}=\diff{\KL}{h}\cdot(x_1-v)\cdot\diff{h}{x_3}=\eps\diff{\KL}{h}.
}
The expression for the derivative of $B$ with respect to $x_2$ is more complicated:
\eqlb{eq8}{
\begin{aligned}
\diff{B}{x_2}&=\frac{1}{2(x_1-u)}\cdot
\Big[rv^{r-1}+\diff{\KL}{u}\cdot(x_1-v)-\KL+\diff{\KL}{h}\cdot\big(h+\eps m_p'(v)(u-x_1)\big)\Big]
\\
&=\frac{1}{2(x_1-u)}\cdot\Big[rv^{r-1}+\eps\diff{\KL}{u}-\KL+h\diff{\KL}{h}\Big]
+\frac12\Big(\diff{\KL}{u}-\eps m_p'(v)\diff{\KL}{h}\Big).
\end{aligned}
}
Since the derivative of the function $B$ with respect to $x_2$ has to be constant on the 
extremal line with the fixed parameters $(u,h)$, we conclude that 
\eqlb{eq9}{
rv^{r-1}+\eps\diff{\KL}{u}-\KL+h\diff{\KL}{h}=0
}
and
\eqlb{190903}{
\diff{B}{x_2}=\frac12\Big(\diff{\KL}{u}-\eps m_p'(v)\diff{\KL}{h}\Big).
}

From the formula for the function $B$ at the point $W_+(v)$, we get the boundary 
value $\KL(u,0)=m_r(v)$. The general solution of the differential equation~\eqref{eq9} with this
boundary condition has the following form:
\eqlb{eq10}{
\KL(u,h)=e^{\frac{u}{\eps}}\PsiL(e^{-\frac{u}{\eps}}h)+m_r(v),
} 
where $\PsiL$ is an arbitrary sufficiently smooth function with $\PsiL(0) = 0$. We remind the reader that in this subsection we set $v=u-\eps$.

We find $\PsiL$ from the boundary value at the point $\HL(u)$. Recall that $\HL(u)$ coincides with $H(u,h)$ for
$$
h=\half\big((u+\eps)^p-(u-\eps)^p\big)-\eps m_p(u-\eps).
$$
Since we assume the segment $[U(u-\eps),U(u+\eps)]$ to be an extremal line, $B$ 
has to be linear on it, i.\,e., $B=\half\big((u-\eps)^r+(u+\eps)^r\big)$ at the midpoint 
of this segment. As a result, we come to the equation
$$
B\big(\HL(u)\big)=(u-\eps)^r+\KL\Big(u,\half\big((u+\eps)^p-(u-\eps)^p\big)-\eps m_p(v)\Big)\cdot\eps
=\half\big((u+\eps)^r+(u-\eps)^r\big),
$$
whence
\eqlb{KLH_and_HLH}{
\KL\Big(u,\half\big((u+\eps)^p-(u-\eps)^p\big)-\eps m_p(v)\Big)=\frac{(u+\eps)^r-(u-\eps)^r}{2\eps},
}
or
\eqlb{220901}{
\PsiL\Big(e^{-\frac{u}\eps}\big[\half\big((u+\eps)^p-(u-\eps)^p\big)-\eps m_p(u-\eps)\big]\Big)
=\frac1\eps e^{-\frac{u}\eps}\big[\half\big((u+\eps)^r-(u-\eps)^r\big)-\eps m_r(u-\eps)\big].
}
Let us introduce a function $\xi\mapsto\wL(\xi; s,\eps)$ by the formula
\eqlb{w_L_def}{
\wL(\xi;s,\eps) = e^{-\frac\xi\eps}\big[\half\big((\xi+\eps)^s-(\xi-\eps)^s\big)-\eps m_s(\xi-\eps)\big],
\qquad \xi\ge\eps.
}
The symbols $s$ and $\eps$ are considered here as fixed parameters, we will sometimes omit them when it does not lead to misunderstanding. Then we can rewrite 
relation~\eqref{220901} in terms of $\wL$:
\eqlb{190904}{
\PsiL\big(\wL(u;p,\eps)\big)=\frac1\eps\wL(u;r,\eps\big).
} 
In Lemma~\ref{lemmaSignW_r} below  we will prove that the function $\wL$ is monotone, and thereby,
the inverse function is correctly defined.

This representation suggests the following change of parametrization for the extremals of the fan~$\FL(u)$.
Till now, they were parametrized by $h$. Let us introduce a new parameter $\xi$:
\eqlb{021104}{
\xi=\wL^{-1}(e^{-\frac u\eps}h;p,\eps),
}
i.\,e.,
\eqlb{eq081002}{
h=e^{\frac u\eps}\wL(\xi;p,\eps).
}
When the variable $\xi$ is running from $u$ till $\infty$, then $h$ is running from
$\half\big((u+\eps)^p-(u-\eps)^p\big)-\eps m_p(u-\eps)$ till zero. Therefore, the range of $\wL$ 
covers the domain of $\PsiL$, and we can write down the following expression for $\PsiL$:
\eqlb{Psi_L_def}{
\PsiL(\;\cdot\;)=\frac1\eps\wL^{\phantom1}\big(\wL^{-1}(\;\cdot\;;p,\eps);r,\eps\big).
}

Rewriting~\eqref{291001} in terms of $\xi$ we get:
\eqlb{291002l}{
x_3=v^p+\Big(\frac1\eps e^{\frac u\eps}\wL(\xi;p,\eps)+m_p(v)\Big)(x_1-v).
}
Taking into account~\eqref{190904}, we rewrite~\eqref{eq3} as follows:
\eqlb{291003l}{
B(x)=v^r+\Big(\frac1\eps e^{\frac u\eps}\wL(\xi;r,\eps)+m_r(v)\Big)(x_1-v).
}

At the end of this subsection, we collect some formulas for the derivatives of the function $B$. 
We will use them in various proofs. For brevity, we omit the argument 
$e^{-\frac{u}{\eps}}h$ of the function $\PsiL$ and its derivatives, as well as the argument $v$ of the 
functions $m_s$ and their derivatives. From~$\eqref{eq10}$ we see that
\eqlb{eq11}{
\diff{\KL}{h}=\PsiL'; \qquad\quad
\diff{\KL}{u}=\frac{1}{\eps}e^{\frac{u}{\eps}}\PsiL-\frac{h}{\eps}\PsiL'+m_r'.
}
Recall that $\frac{\partial B}{\partial x_3}$ was computed in~\eqref{eq7}. We have
\eqlb{050801}{
\frac{\partial B}{\partial x_3} = \eps\PsiL', \qquad\quad
\frac{\partial^2 B}{\partial x_3^2} = \frac{\eps^2}{x_1-v} e^{-\frac{u}{\eps}}\PsiL''.
}
Now, using relations~\eqref{eq11}, we rewrite formula~\eqref{190903} as follows
\eqlb{eq12}{
\diff{B}{x_2}=\frac12\Big(\frac1\eps e^{\frac{u}\eps}\PsiL-\frac{h}\eps\PsiL'+m_r'-\eps m_p'\PsiL'\Big).
}

Differentiating~\eqref{eq12}, we obtain
\begin{align}
\notag 
\frac{\partial^2 B}{\partial x_2^2}&=-\frac1{2\eps}\diff{h}{x_2}e^{-\frac{u}\eps}(h+\eps^2 m_p')\PsiL''+
\frac1{2\eps^2}\diff{u}{x_2}\Big(e^{\frac{u}\eps}\PsiL-(h+\eps^3 m_p'')\PsiL'+
he^{-\frac{u}\eps}(h+\eps^2 m_p')\PsiL''+\eps^2 m_r''\Big)
\\
\notag
&\buildrel{\eqref{eq6}}\over=\frac1{2\eps^2}\diff{u}{x_2}\Big(e^{\frac{u}{\eps}}\PsiL-(h+\eps^3 m_p'')\PsiL'+\eps^2 m_r''+
e^{-\frac{u}\eps}(h+\eps^2 m_p')\PsiL''\cdot\big(h-\eps\frac{h+\eps(u-x_1)m_p'}{x_1-v}\big)\Big)
\\
\label{eq14}
&\buildrel{\eqref{eq4}}\over=\frac1{4\eps^2(x_1-u)}\Big(e^{\frac{u}{\eps}}\PsiL-(h+\eps^3 m_p'')\PsiL'+\eps^2 m_r''
+e^{-\frac{u}\eps}(h+\eps^2 m_p')^2\PsiL''\cdot\frac{x_1-u}{x_1-v}\Big).
\end{align}

Differentiating~\eqref{eq12} with respect to $x_3$ and using~\eqref{eq6}, we get
\eqlb{180901}{
\frac{\partial^2 B}{\partial x_2\partial x_3}
=\frac{e^{-\frac{u}\eps}(h+\eps^2m_p')}{2(v-x_1)}\PsiL''.
}
 
Collecting~\eqref{050801}, \eqref{eq14}, and~\eqref{180901}, we finally obtain
\eqlb{eq15}{
\det
\begin{pmatrix}
B_{x_2x_2}&B_{x_2x_3}
\\
B_{x_2x_3}&B_{x_3x_3}
\end{pmatrix} 
=\frac{e^{-\frac{u}\eps}\PsiL''}{4(x_1-u)(x_1-v)}
\Big(e^{\frac{u}\eps}\PsiL-(h+\eps^3 m_p'')\PsiL'+\eps^2 m_r''\Big).
}

\subsection{Bellman function in the domain $\Xi\cii{\mathrm R+}$}
\label{sec_def_psiR}

The construction of a Bellman candidate in $\Xi\cii{\mathrm R+}$ is quite similar to that for 
$\Xi\cii{\mathrm L+}$. The additional difficulty  appears here due to the fact that we have to 
distinguish the cases $0\le u\le\eps$ and $u\ge\eps$, compare Fig.~\ref{3_u_infty}, \ref{2_u_3}, 
\ref{1_u_2} with Fig.~\ref{0_u_1}. If $u>\eps$ then the situation is completely the same as in 
$\Xi\cii{\mathrm L+}$. However, if $u<\eps$ the corresponding long chord cannot be an extremal, 
because our function $\Bell_{p,r}^\pm$ is symmetric with respect to the plane $x_1=0$, but this chord intersects 
(not orthogonally) the plane of symmetry. For $u<\eps$ the fan $\FR(u)$ is continued till 
the domain $R$. This gives us another boundary condition for the corresponding function $\PsiR$. 

Since we consider right tangents, now we have $u=u(x)\le x_1$, and we choose the smaller root 
of~\eqref{190901},~i.\,e.,
\eqlb{021102}{
u=x_1-\sqrt{x_1^2+\eps^2-x_2}
}
(compare with~\eqref{021101}). The tangent lines forming the fan $\FR(u)$ have the common endpoint 
$U(v)$, $v=u+\eps=x_1+\Delta_-$. The second endpoint of the extremal passing through $x$ will be 
denoted as before by $H(u,h)$. Now $h=h(x)$ is determined by the third coordinates of this endpoint 
by the formula
\eqlb{210903}{
H(u,h)=(u,u^2+\eps^2,v^p-\eps k_p(v)+h)
}
(compare with~\eqref{210902}).
This second endpoint runs throw the segment $[W_-(v),\HR(u)]$ of the line $x=(u,u^2+\eps^2,x_3)$ 
of the parabolic part of the boundary, where
\eqlb{210904}{
\HR(u)=
\begin{cases}
[U(u-\eps),U(u+\eps)]\cap[W_-(u+\eps),W_+(u-\eps)],\quad&\text{ for \ }u\ge\eps, 
\\
\{x\colon x_1 = u,\ x_2=u^2+\eps^2\}\cap R(v),\quad&\text{ for \ }0\le u\le\eps,
\rule{0pt}{15pt}
\end{cases}
}
or in coordinates
\eqlb{210905}{
\HR(u)=
\begin{cases}
\left(u,u^2+\eps^2,\half\big((u+\eps)^p+(u-\eps)^p\big)\right), \quad&\text{ for \ }u\ge\eps, 
\\
\left(u,u^2+\eps^2,v^{p-2}(u^2+\eps^2)\right),\quad&\text{ for \ }0\le u\le\eps.
\rule{0pt}{15pt}
\end{cases}
}
This means that in the fan $\FR(u)$ the variable $h$ runs from $0$ till 
$\eps k_p(v)-\half\big((u+\eps)^p-(u-\eps)^p\big)$ for $u\ge\eps$ and till $\eps k_p(v)-2\eps uv^{p-2}$
for $0\le u\le\eps$.

We see that $\sign h=\sign(p-2)$, because the point $W_-(u)$ is on the upper boundary of the domain
if $p<2$ and on the lower boundary if $p>2$.

Since $B$ has to be linear along the extremal $[U(v),H(u,h)]$, we write
\eqlb{eq17}{
B(x) = v^r+\KR(u,h)(x_1-v),\qquad x\in[U(v),H(u,h)],
}

In the same way as we got~\eqref{eq5}, we obtain:
\eqlb{eq18}{
\frac{x_1-v}{-\eps} = \frac{x_3-v^p}{-\eps k_p(v)+h}.
}
This yields
\eqlb{011101}{
h=\eps\Big(k_p(v)-\frac{x_3-v^p}{x_1-v}\Big)
}
or
\eqlb{011102}{
x_3=v^p-\Big(\frac h\eps-k_p(v)\Big)(x_1-v)=(u+\eps)^p+\Big(\frac h\eps-k_p(u+\eps)\Big)\Delta_-.
}
We collect the expressions for the derivatives of $u$ and $h$:
\eqlb{eq19}{
\diff{u}{x_2}=\diff{v}{x_2}=\frac1{2(x_1-u)},\qquad\qquad\qquad\diff{u}{x_3}=\diff{v}{x_3}=0,
}
\eqlb{eq20}{
\diff{h}{x_2}\;{\buildrel\eqref{m-diff}\over=}\;\frac{h+\eps (x_1-u)k_p'(v)}{x_1-v}\cdot\diff{v}{x_2},\qquad
\diff{h}{x_3}=-\frac{\eps}{x_1-v}.
}
Similarly to~\eqref{eq7} and~\eqref{eq8}, we have
\eqlb{210906}{
\diff{B}{x_3}=-\eps\diff{\KR}{h},
}
\eqlb{210907}{
\begin{aligned}
\diff{B}{x_2}&=\frac{1}{2(x_1-u)}\cdot
\Big[rv^{r-1}+\diff{\KR}{u}\cdot(x_1-v)-\KR+\diff{\KR}{h}\cdot\big(h-\eps k_p'(v)(u-x_1)\big)\Big]
\\
&=\frac{1}{2(x_1-u)}\cdot\Big[rv^{r-1}-\eps\diff{\KR}{u}-\KR+h\diff{\KR}{h}\Big]
+\frac12\Big(\diff{\KR}{u}+\eps k_p'(v)\diff{\KR}{h}\Big).
\end{aligned}
}

The requirement for $\diff{B}{x_2}$ to be constant along the extremal line yields the following
differential equation (compare with~\eqref{eq9}):
\eqlb{eq2508202}{
rv^{r-1}-\eps\diff{\KR}{u}-\KR+h\diff{\KR}{h}=0.
}
As before, we know the value $B(W_-(v))=\Ak{r}(W_-(v))=v^r-\eps k_r(v)$, and it implies
the boundary condition $\KR(u,0)=k_r(v)$. The general solution of~\eqref{eq2508202} with
this boundary condition has the form
\eqlb{eq21}{
\KR(u,h)=e^{-\frac{u}{\eps}}\PsiR(e^{\frac{u}{\eps}}h)+k_r(v),   
}
where $\PsiR$ is arbitrarily sufficiently smooth function with $\PsiR(0)=0$ (compare with~\eqref{eq10}).
Identities~\eqref{210907} and~\eqref{eq2508202} imply
\eqlb{eq2508203}{
\diff{B}{x_2}=\frac12\Big(\diff{\KR}{u}+\eps k'_p(v)\diff{\KR}{h}\Big).
}

Now, we consider the cases $0\le u\le\eps$ and $u\ge\eps$ separately.

Let $0\le u\le\eps$. From formula~\eqref{eq070502} we have $B(\HR)=v^r-2\eps uv^{r-2}$,  
whence using~\eqref{eq17} we obtain
$$
\KR(u,\eps k_p(v)-2\eps uv^{p-2}) = 2uv^{r-2},
$$
or
\eqlb{220902}{
\PsiR\big(e^{\frac{u}\eps}\big[\eps k_p(v)-2\eps uv^{p-2}\big]\big)
=e^{\frac{u}\eps}\big[2uv^{r-2}-k_r(v)\big].
}

Now, let $u\ge\eps$. Similarly to the case of the function $\PsiL$, we have 
the endpoint $\HR$ at the middle of the extremal segment $[U(u-\eps),U(u+\eps)]$, where $B$ is linear, 
i.\,e., $B(\HR)=\half\big((u+\eps)^r+(u-\eps)^r\big)$. As a result, we come to the equation
$$
B\big(\HR(u)\big)=v^r+\KR\Big(u,\half\big((u-\eps)^p-(u+\eps)^p\big)+\eps k_p(v)\Big)\cdot(-\eps)
=\half\big((u+\eps)^r+(u-\eps)^r\big),
$$
$$
\KR\Big(u,\half\big((u-\eps)^p-(u+\eps)^p\big)+\eps k_p(v)\Big)=\frac{(u+\eps)^r-(u-\eps)^r}{2\eps},
$$
or
\eqlb{220903}{
\PsiR\Big(e^{\frac{u}\eps}\big[\half\big((u-\eps)^p-(u+\eps)^p\big)+\eps k_p(v)\big]\Big)
=\frac1\eps e^{\frac{u}\eps}\big[\half\big((u+\eps)^r-(u-\eps)^r\big)-\eps k_r(v)\big].
}
Let us introduce a function $\xi\mapsto\wR(\xi;s,\eps)$ by the formula
\eqlb{220904}{
\wR(\xi;s,\eps)=
\begin{cases}
\eps e^{\frac{\xi}{\eps}}\big[k_s(\xi+\eps)-2\xi(\xi+\eps)^{s-2}\big],\quad
&\text{ if \ }0\le\xi\le\eps,
\\
e^{\frac\xi\eps}\big[\half\big((\xi-\eps)^s-(\xi+\eps)^s\big)+\eps k_s(\xi+\eps)\big],\quad
&\text{ if \ }\xi\ge\eps.
\rule{0pt}{15pt}
\end{cases}
}
The symbols $s$ and $\eps$ are considered here as fixed parameters, we will sometimes omit them if it does not lead to ambiguity. Then we can rewrite 
relations~\eqref{220902} and~\eqref{220903} in terms of $\wR$:
\eqlb{220905}{
\PsiR\big(\wR(u;p,\eps)\big)=-\frac1\eps\wR(u;r,\eps\big).
} 
In Lemma~\ref{ww_increase} we prove that the function $\wR$ is monotone, and thereby, 
the inverse function is correctly defined.

This representation suggests the following change of parametrization for the extremals of the fan~$\FR(u)$.
Till now, they were parametrized by $h$. Let us introduce a new parameter $\xi$:
\eqlb{250703}{
\xi=\wR^{-1}(e^{\frac u\eps}h;p,\eps),
}
i.\,e.,
\eqlb{eq081003}{
h=e^{-\frac u\eps}\wR(\xi;p,\eps).
}
When the variable $\xi$ is running from zero till $u$, then $h$ is running from zero till
$\eps\big[k_p(v)-2uv^{p-2}\big]$ if $u\le\eps$ and till
$\half\big((u-\eps)^p-(u+\eps)^p\big)+\eps k_p(v)$ if $u\ge\eps$. Since the range of $\wR$ cover
the domain of $\PsiR$, we can write down the following expression for $\PsiR$:
\eqlb{Psi_R_def}{
\PsiR(\;\cdot\;)=-\frac1\eps\wR^{\phantom1}\big(\wR^{-1}(\;\cdot\;; p,\eps);r,\eps\big).
} 

Rewriting~\eqref{011102} in terms of $\xi$, we get:
\eqlb{291002r}{
x_3=v^p-\Big(\frac1\eps e^{-\frac u\eps}\wR(\xi;p,\eps)-k_p(v)\Big)(x_1-v).
}
Taking into account~\eqref{eq21} and~\eqref{220905}, we rewrite~\eqref{eq17} as follows:
\eqlb{291003r}{
B(x)=v^r-\Big(\frac1\eps e^{-\frac u\eps}\wR(\xi;r,\eps)-k_r(v)\Big)(x_1-v).
}

As at the end of the preceding subsection, we collect here some formulas for the derivatives of the function 
$B$ and calculate $\det\{B_{x_i x_j}\}_{2\leq i,j\leq3}$. As before, we omit the argument 
$e^{\frac{u}{\eps}}h$ of the function $\PsiR$ and its derivatives, as well as the argument $v$ of the 
functions $k_s$ and their derivatives.

Differentiating \eqref{eq21}, we get
\eqlb{eq2508204}{
\diff{\KR}{h}=\PsiR';\qquad\quad\diff{\KR}{v}
=-\frac1\eps e^{-\frac{u}\eps}\PsiR+\frac{h}\eps\PsiR'+k'_r.
}
We use these relations to rewrite~\eqref{210906} and~\eqref{eq2508203}:
\eqlb{250601}{
\diff{B}{x_3}=-\eps \PsiR'; \qquad\quad
\diff{B}{x_2}=\frac12\Big(k'_r-\frac1\eps e^{-\frac{u}\eps}\PsiR+\big(\frac{h}\eps+\eps k'_p\big)\PsiR'\Big).
}

Using these formulas together with~\eqref{eq19} and~\eqref{eq20}, we can check the following relations
\eqlb{220906}{
\frac{\partial^2 B}{\partial x_3^2} =\frac{\eps^2}{x_1 - v}e^{\frac{u}\eps}\PsiR'';
}
\eqlb{220907}{
\frac{\partial^2 B}{\partial x_2\partial x_3}
=\frac{e^{\frac{u}\eps}(h+\eps^2k_p')}{2(v-x_1)}\PsiR''
}
(compare with~\eqref{180901});

\eqlb{240901}
{\begin{aligned}
\frac{\partial^2 B}{\partial x_2^2}
&=\frac1{2\eps}\diff{h}{x_2}e^{\frac{u}\eps}(h+\eps^2 k_p')\PsiR''+
\frac1{2\eps^2}\diff{u}{x_2}\Big(e^{-\frac{u}\eps}\PsiR-(h-\eps^3 k_p'')\PsiR'+
he^{\frac{u}\eps}(h+\eps^2 k_p')\PsiR''+\eps^2 k_r''\Big)
\\
&=\frac1{2\eps^2}\diff{u}{x_2}\Big(e^{-\frac{u}{\eps}}\PsiR-(h-\eps^3 k_p'')\PsiR'+\eps^2 k_r''+
e^{\frac{u}\eps}(h+\eps^2 k_p')\PsiR''\cdot\big(h+\eps\frac{h+\eps(x_1-u)k_p'}{x_1-v}\big)\Big)
\\
&=\frac1{4\eps^2(x_1-u)}\Big(e^{-\frac{u}{\eps}}\PsiR-(h-\eps^3 k_p'')\PsiR'+\eps^2 k_r''
+e^{\frac{u}\eps}(h+\eps^2 k_p')^2\PsiR''\cdot\frac{x_1-u}{x_1-v}\Big)
\end{aligned}
}
(compare with~\eqref{eq14}).

Finally, we obtain
\eqlb{290902}{
\det
\begin{pmatrix}
B_{x_2x_2}&B_{x_2x_3}
\\
B_{x_2x_3}&B_{x_3x_3}
\end{pmatrix} 
=\frac{e^{\frac{u}\eps}\PsiR''}{4(x_1-u)(x_1-v)}
\Big(e^{-\frac{u}\eps}\PsiR-(h-\eps^3 k_p'')\PsiR'+\eps^2 k_r''\Big)
}
(compare with~\eqref{eq15}).

Let us note that instead of calculating all these derivatives we could replace the subindex L by R,
$m$ by $k$, and $\eps$ by $-\eps$ in all formulas we deduced for the case of $\PsiL$.

\subsection{Correctness of definitions}

First, to ensure that the function $\PsiL$ is defined correctly we will show that $\wL$ is strictly 
monotone function.

\begin{Le}\label{lemmaSignW_r}
The equality $\sign \wL'(\xi;s,\eps)=\sign(s-2)$ holds for $\xi\ge\eps$ and $s\in(1,+\infty)$.
\end{Le}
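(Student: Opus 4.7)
The plan is to compute $\wL'(\xi)$ explicitly, collapse the $m_s$-terms via the identity~\eqref{m-diff}, and then determine the sign of the resulting elementary expression by interpreting it as the remainder in the trapezoidal rule.

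First I would write $\wL(\xi)=e^{-\xi/\eps}f(\xi)$ with
\[
f(\xi)=\tfrac12\bigl((\xi+\eps)^s-(\xi-\eps)^s\bigr)-\eps m_s(\xi-\eps),
\]
so that $\eps e^{\xi/\eps}\wL'(\xi)=\eps f'(\xi)-f(\xi)$. The only $m_s$-contributions that survive are $\eps m_s(\xi-\eps)-\eps^2 m_s'(\xi-\eps)$, which by~\eqref{m-diff} collapse into $s\eps(\xi-\eps)^{s-1}$. Combining this with the $(\xi-\eps)^{s-1}$ term coming from $f'$, I expect to arrive at
\begin{equation*}
2\eps\, e^{\xi/\eps}\wL'(\xi)=s\eps\bigl((\xi+\eps)^{s-1}+(\xi-\eps)^{s-1}\bigr)-\bigl((\xi+\eps)^s-(\xi-\eps)^s\bigr).
\end{equation*}

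Next, with $a=\xi+\eps$ and $b=\xi-\eps$ (so $a-b=2\eps$), the right-hand side is exactly the error $T-I$ of the trapezoidal approximation $T=\tfrac{a-b}{2}\bigl(h(a)+h(b)\bigr)$ to the integral $I=\int_b^a h(t)\,dt=a^s-b^s$, for the integrand $h(t)=st^{s-1}$. Since $h''(t)=s(s-1)(s-2)t^{s-3}$ has sign $\sign(s-2)$ on $(0,\infty)$ for $s\in(1,\infty)$, the integrand is convex when $s>2$ and concave when $1<s<2$; the trapezoidal rule therefore overestimates, respectively underestimates, the integral, which yields $\sign\wL'(\xi)=\sign(s-2)$ for all $\xi>\eps$. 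The degenerate endpoint $\xi=\eps$ (where $b=0$) is handled directly: the bracket reduces to $\eps(s-2)(2\eps)^{s-1}$, which again has the correct sign. If preferred, the convexity conclusion can be replaced by a two-line argument: fix $b$, differentiate the bracket in $a$, and apply the mean value theorem to $t^{s-1}$ on $[b,a]$.

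The main obstacle is the careful bookkeeping in the first step, where one must group the $m_s$- and $m_s'$-contributions so that a single application of~\eqref{m-diff} eliminates both nonelementary terms simultaneously; after the expression is reduced to its elementary form, the sign analysis is standard.
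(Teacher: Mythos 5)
Your proposal is correct, and up to packaging it is the paper's own argument: the reduction via~\eqref{m-diff} to
$2\eps e^{\xi/\eps}\wL'(\xi)=s\eps\bigl((\xi+\eps)^{s-1}+(\xi-\eps)^{s-1}\bigr)-\bigl((\xi+\eps)^s-(\xi-\eps)^s\bigr)$ is exactly formula~\eqref{dwu}. The only divergence is the final sign step: the paper writes the bracket as $\tfrac12\AAA(\xi,\eps,s)$ and reads the sign from the integral representation~\eqref{240403}, which is nothing but the Peano-kernel (integral) form of the trapezoidal remainder you invoke, so your convexity/concavity phrasing is an equivalent and slightly more self-contained way to finish; your separate treatment of $\xi=\eps$ also matches what the explicit formula gives. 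What the paper's packaging buys is reusability: the function $\AAA$ and~\eqref{240403} are used again in Lemma~\ref{le300401} and throughout the appendices, whereas your argument settles only this lemma — which is all the statement asks for.
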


\begin{proof}
Direct differentiation of $\wL$ from~\eqref{w_L_def} leads to
$$
\wL'=e^{-\frac\xi\eps}\Big(\!-\tfrac1{2\eps}\big((\xi+\eps)^s-(\xi-\eps)^s\big)+m_s(\xi-\eps)+
\half s\big((\xi+\eps)^{s-1}-(\xi-\eps)^{s-1}\big)-\eps m_s'(\xi-\eps)\Big).
$$
Applying relation \eqref{m-diff}, we get
\eqlb{dwu}{
\wL'=\tfrac1{2\eps} e^{-\frac\xi\eps}\Big(-(\xi+\eps)^s + \!(\xi-\eps)^s+s\eps(\xi+\eps)^{s-1}+s\eps(\xi-\eps)^{s-1}\Big).
}
We introduce the following function
\eqlb{051003}{
\AAA(\alpha,\beta,s) = 2\Big(\!(\alpha-\beta)^s-(\alpha+\beta)^s+s\beta(\alpha+\beta)^{s-1}+s\beta(\alpha-\beta)^{s-1}\Big),
}
which admits the integral representation
\eqlb{240403}{
\AAA(\alpha,\beta,s)=s(s-1)(s-2)\int\limits_{-\beta}^\beta(\beta^2-\lambda^2)(\lambda+\alpha)^{s-3}d\lambda\,.
}

The sign of $\wL'$ is clear from the relation
\eqlb{280401}{
\wL'(\xi; s,\eps) = \tfrac1{4\eps} e^{-\frac\xi\eps}\AAA(\xi,\eps,s).
}
\end{proof}

\begin{Le}
\label{ww_increase}
The function $\wR$ is $C^1$-smooth and the equality $\sign\wR'(\xi;s,\eps)=\sign(s-2)$ holds for 
$\xi\ge0$ and $s\in(1,+\infty)$.
\end{Le}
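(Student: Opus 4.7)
The plan is to treat the two pieces of the piecewise definition~\eqref{220904} separately, and then to match them at the single junction $\xi=\eps$. On each open subinterval the function is manifestly smooth (in particular, the only potentially singular ingredient $(\xi-\eps)^s$ in the upper branch is $C^1$ at $\xi=\eps$ because $s>1$), so the $C^1$-claim will reduce to checking continuity of the value and of the first derivative at $\xi=\eps$.

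On the upper branch $\xi\geq\eps$ I would mimic the proof of Lemma~\ref{lemmaSignW_r} verbatim. Differentiating directly and invoking the identity $\eps k_s'(u)=su^{s-1}-k_s(u)$ from~\eqref{m-diff}, the $k_s(\xi+\eps)$ contributions cancel, and I expect to land on
$$
\wR'(\xi;s,\eps)=\tfrac{1}{4\eps}e^{\xi/\eps}\AAA(\xi,\eps,s),
$$
with the same $\AAA$ from~\eqref{051003}. Since $\xi\geq\eps$, the integrand $(\eps^2-\lambda^2)(\lambda+\xi)^{s-3}$ in the representation~\eqref{240403} is non-negative, so $\sign\AAA(\xi,\eps,s)=\sign(s-2)$ for $s\in(1,\infty)$, as required.

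The lower branch $0\leq\xi\leq\eps$ is genuinely new: here $\wR$ is tailored so that the fan $\FR(u)$ joins the foliation $R(v)$ rather than the long chord, and there is no direct $\wL$-counterpart. I would differentiate
$\wR(\xi)=\eps e^{\xi/\eps}\big[k_s(\xi+\eps)-2\xi(\xi+\eps)^{s-2}\big]$
directly, use~\eqref{m-diff} once more to replace $\eps k_s'(\xi+\eps)$ by $s(\xi+\eps)^{s-1}-k_s(\xi+\eps)$, factor out $(\xi+\eps)^{s-3}e^{\xi/\eps}$, and collapse the remaining polynomial bracket via the elementary identity
$(s-2)(\xi+\eps)^2-2\eps\xi(s-2)=(s-2)(\xi^2+\eps^2)$
to obtain the closed form
$$
\wR'(\xi;s,\eps)=(s-2)(\xi^2+\eps^2)(\xi+\eps)^{s-3}e^{\xi/\eps},\qquad 0\leq\xi\leq\eps,
$$
whose sign is manifestly $\sign(s-2)$.

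Matching at $\xi=\eps$ is then a direct computation: both branches give the common value $\wR(\eps)=\eps e\big(k_s(2\eps)-(2\eps)^{s-1}\big)$, and both formulas for $\wR'(\eps)$ reduce to $(s-2)\,2^{s-2}\eps^{s-1}e$ (on the upper side using $\AAA(\eps,\eps,s)=2^s\eps^s(s-2)$, on the lower side by direct substitution into the closed form). The main technical obstacle I anticipate is the algebraic bookkeeping in the lower-branch derivative — several contributions with mixed powers of $\xi$, $\eps$, and $(\xi+\eps)$ have to be marshalled before the clean factor $(s-2)(\xi^2+\eps^2)$ emerges; everything else is either a direct analogue of the preceding lemma or an elementary evaluation.
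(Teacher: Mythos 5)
Your proof is correct and follows essentially the same route as the paper: compute both branches of $\wR'$ directly, reduce the upper branch to $\AAA$ (equivalently, to $e^{2\xi/\eps}\wL'$ and Lemma~\ref{lemmaSignW_r}), read off the sign from the factored lower branch, and confirm that the two one-sided values of $\wR'$ agree at $\xi=\eps$. Your explicit closed forms for both derivatives and the common value $\wR'(\eps)=e(s-2)2^{s-2}\eps^{s-1}$ all match the paper's formula~\eqref{ww_dif}.
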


\begin{proof}
By the direct differentiation of 
$$
\wR=
\begin{cases}
\eps e^\frac\xi\eps\Big(k_s(\xi+\eps)-2\xi(\xi+\eps)^{s-2}\Big),\quad&\text{ if \ }0\le\xi\le\eps,
\\
e^\frac\xi\eps\Big(\half\big((\xi-\eps)^s-(\xi+\eps)^s\big)+\eps k_s(\xi+\eps)\Big),\quad&\text{ if \ }\xi\ge\eps
\rule{0pt}{15pt}
\end{cases}
$$
and the usage of~\eqref{m-diff}, we get
\eqlb{ww_dif}{
\wR'=
\begin{cases}
e^\frac\xi\eps(s-2)(\xi^2+\eps^2)(\xi+\eps)^{s-3},\quad&\text{ if \ }0\le\xi\le\eps,
\\
\tfrac1{2\eps} e^\frac\xi\eps\Big(\!s\eps(\xi-\eps)^{s-1}+s\eps(\xi+\eps)^{s-1}+(\xi-\eps)^s-(\xi+\eps)^s\Big),
\quad&\text{ if \ }\xi\ge\eps.
\rule{0pt}{15pt}
\end{cases}
}

Continuity of $\wR'$ immediately follows from this formula, and $\wR'(\eps;s,\eps)=e(s-2)2^{s-2}\eps^{s-1}$.
The statement $\sign\wR'=\sign(s-2)$ is clear directly from the first line of~\eqref{ww_dif} if $0\le\xi\le\eps$.
If $\xi\ge\eps$, then comparing~\eqref{ww_dif} with~\eqref{dwu} we note that
\eqlb{w_ww_relation}{
\wR'(\xi)=e^{\frac{2\xi}\eps}\wL'(\xi),\quad\xi\ge\eps.
}
Applying Lemma~\ref{lemmaSignW_r}, we finish the proof.
\end{proof}

\subsection{Definitions. Summary}
\label{Definitions}

In this subsection we collect all formulas defining our Bellman function (we call it $B_2$)
together with the formulas for the Bellman function (we call it $B_1$) constructed in~\cite{SVZ}.

First we recall notation used in the description of foliations.
\begin{itemize}
\item $U(v)$ is the point at the skeleton with the first coordinate $v$, i.\,e., $U(v)=(v,v^2,|v|^p)$.

\item $W_\pm(v)$ are the following points
\eqlb{061002}{
\begin{split} 
W_+(v)=&\Big(v+\eps,(v+\eps)^2+\eps^2,v^p+\eps m_p(v)\Big),\quad v\ge0,
\\
W_-(v)=&\Big(v-\eps,(v-\eps)^2+\eps^2,v^p-\eps k_p(v)\Big),\quad v\ge\eps,
\end{split}
}
and
\begin{align*}
W_+(v)=&\Big(v+\eps,(v+\eps)^2+\eps^2,|v|^p-\eps k_p(|v|)\Big),\quad v\le-\eps,
\\
W_-(v)=&\Big(v-\eps,(v-\eps)^2+\eps^2,|v|^p+\eps m_p(|v|)\Big),\quad v\le0.
\end{align*}

\item The two-dimensional linearity domain $T(v)$ is the induced (in $\Omega^3_\eps$) convex hull of $U(v)$, $W_\pm(v)$, i.\,e., it is the curvilinear triangle being the intersection 
of the triangle whose vertices are $U(v)$, $W_\pm(v)$ with the domain $\Omega^3_\eps$.

\item The two-dimensional linearity domain $\tilde T(v)$, $0<v<\eps$, is the induced 
(in $\Omega^3_\eps$) convex hull of the points $U(\pm v)$, $W_\pm(\pm v)$, i.\,e., it is the curvilinear trapezoid being the intersection 
of the usual trapezoid whose vertices are $U(\pm v)$, $W_\pm(\pm v)$ with the domain $\Omega^3_\eps$.

\item The two-dimensional linearity domain $R(v)$ is the induced 
(in $\Omega^3_\eps$) convex hull of the points $U(0)$ and $U(\pm v)$.

\item $\HL(u)=\big(u,u^2+\eps^2,\half\big((u+\eps)^p+(u-\eps)^p)\big)$,\quad for $u\ge\eps$ (see~\eqref{210901}).

\item $\HR(u)=\begin{cases}
\left(u,u^2+\eps^2,\half\big((u+\eps)^p+(u-\eps)^p)\big)\right),&\text{ for \ }u\ge\eps, 
\\
\left(u,u^2+\eps^2,(u+\eps)^{p-2}(u^2+\eps^2)\right),&\text{ for \ }0\le u\le\eps
\rule{0pt}{15pt}
\end{cases}$ (see~\eqref{210905}).

\item The two-dimensional domain $\FL(u)$ ($u\ge\eps$) is the triangle with the vertices $U(v)$, $W_+(v)$, 
and $\HL(u)$ foliated by a fan of extremals from the point $U(v)$, $v=u-\eps$.

\item The two-dimensional domain $\FR(u)$ ($u\ge0$) is the triangle with the vertices $U(v)$, $W_-(v)$,  
and $\HR(u)$ foliated by a fan of extremals from the point $U(v)$, $v=u+\eps$.

\item $\Delta_\pm=\eps\pm d$, $d=\sqrt{x_1^2+\eps^2-x_2}$\,.
\end{itemize}

\subsubsection{Definition of $B_1$}
\label{B1_def}

\paragraph{Domain $\Xi\cii0.$} The domain
\eqlb{061001}{
\Xi\cii0=\Set{x\in\Omega_\eps^3}{|x_1|\leq 2\eps,\ 
x_2\geq4\eps|x_1|-3\eps^2\!,\ (p-2)\big(x_3-\eps^p-\frac{x_2-\eps^2}{4\eps}m_p(\eps)\big)\geq 0},
}
is foliated by the two-dimensional linearity domains $\tilde T(v)$, $0\le v\le\eps$. The curvilinear trapezoid
$\tilde T(v)$ with three straight line sides $[U(-v),U(v)]$, 
$[U(v),W_+(v)]$, and $[U(-v),W_-(-v)]$ belongs to the plane
\eqlb{091001}{
x_3=v^p+\frac{m_p(v)}{2(v+\eps)}(x_2-v^2).
}
The function $B_1$ on $\tilde T(v)$ is defined by the formula 
\eqlb{071001}{
B_1(x)=v^r+\frac{m_r(v)}{2(v+\eps)}(x_2-v^2).
}

\paragraph{Domain $\Xi\cii+.$} The domain
\eqlb{071002}{
\Xi\cii+=\Set{x\in\Omega_\eps^3\setminus\Xi\cii0}{x_1\ge0}, 
}
is foliated by the two-dimensional extremals $T(v)$, $v\ge\eps$. The curvilinear triangle
$T(v)$ is the intersection of the domain $\Omega_\eps^3$ with the plane
\eqlb{091002}{
x_3=v^p+\frac{m_p(v)-k_p(v)}{4\eps}(x_2-2vx_1+v^2)+\frac{m_p(v)+k_p(v)}{2}(x_1-v).
}
The function $B_1$ on $T(v)$ is defined by the formula 
\eqlb{071003}{
B_1(x)=v^r+\frac{m_r(v)-k_r(v)}{4\eps}(x_2-2vx_1+v^2)+\frac{m_r(v)+k_r(v)}{2}(x_1-v).
}

\paragraph{Domain $\Xi\cii-.$} The domain
\eqlb{071004}{
\Xi\cii-=\Set{x\in\Omega_\eps^3\setminus\Xi\cii0}{x_1\le0}, 
}
is foliated by the two-dimensional extremals $T(v)$, $v\le-\eps$. The function $B_1$ on
$\Xi\cii-$ is defined by the formula 
\eqlb{071005}{
B_1(x_1,x_2,x_3)=B_1(-x_1,x_2,x_3).
}

It is possible to join all the formulas above in the following way. 
Let us introduce a function $\xi\mapsto w_1(\xi;s,\eps,x_1,x_2)$ by the formula
\eqlb{071006}{
w_1(\xi;s,\eps,x_1,x_2)=
\begin{cases}
\displaystyle\xi^s+\frac{m_s(\xi)}{2(\xi+\eps)}(x_2-\xi^2),\quad&\text{ if \ }0\le\xi\le\eps,
\\
\displaystyle\xi^s+\frac{m_s(\xi)-k_s(\xi)}{4\eps}(x_2-2\xi x_1+\xi^2)+\frac{m_s(\xi)+k_s(\xi)}2(x_1-\xi),
\quad&\text{ if \ }\xi\ge\eps. \rule{0pt}{20pt}
\end{cases}
}
The symbols $s$, $\eps$, $x_1$, and $x_2$ are considered here as fixed parameters. Then we can rewrite 
relations~\eqref{071001}, \eqref{071003}, and~\eqref{071005} in terms of $w_1$:
\eqlb{071007}{
B_1(x_1,x_2,w_1(v;p,\eps,|x_1|,x_2))=w_1(v;r,\eps,|x_1|,x_2),
}
in other words 
\eqlb{071008}{
B_1(x_1,x_2,x_3)=w_1(w_1^{-1}(x_3;p,\eps,|x_1|,x_2);r,\eps,|x_1|,x_2).
}

\subsubsection{Definition of $B_2$}
\label{B2_def}

\paragraph{Domain $\Xi\cii{\mathrm L+}.$} The domain
\eqlb{071009}{
\begin{aligned}
\Xi\cii{\mathrm L+} =\; &\Big\{x\in\Omega_\eps^3\colon x_1\geq\Delta_-,
\\
&(2-p)\Am{p}(x_1,x_2)\leq(2-p)x_3
\leq(2-p)\frac{\Delta_-(x_1+\Delta_+)^p+\Delta_+(x_1-\Delta_-)^p}{2\eps}\Big\}\,.
\end{aligned}
}
is foliated by the fans $\FL(u)$, $\eps\le u<\infty$ (see Lemma~\ref{080504} below). 

The function $B_2$ on $\FL(u)$ is defined by~\eqref{291003l}: 
\eqlb{eq080505}{
B_2(x)=v^r+\Big[m_r(v)+\frac1\eps e^{\frac u\eps}\wL(\xi;r,\eps)\Big]\Delta_-,
}
where, $v=x_1-\Delta_-$, $u=v+\eps$, the function $\xi\mapsto\wL(\xi;s,\eps)$ was defined by~\eqref{w_L_def}:
$$
\wL(\xi;s,\eps)=e^{-\frac\xi\eps}\big[\half\big((\xi+\eps)^s-(\xi-\eps)^s\big)-\eps m_s(\xi-\eps)\big],
\qquad \xi \in [\eps,+\infty].
$$
We would like to note that $\xi=+\infty$ is included in the domain of $\wL$ and 
$\wL(+\infty)=0$. The value of the variable $\xi=\xi(x;p,\eps)$ in~\eqref{eq080505} is obtained as the solution of the equation
\eqlb{141001}{
x_3=v^p+\Big[m_p(v)+\frac1\eps e^{\frac u\eps}\wL(\xi;p,\eps)\Big]\Delta_-
}
running from $u=x_1+d$ to $+\infty$.

We are ready to write down the function $B_2$ in the form~\eqref{071008}, as it was made for the function $B_1$.
From~\eqref{eq080505} and~\eqref{141001} we see that we should introduce the following function $w_2$:
\eqlb{141002}{
w_2(\xi;s,\eps,x_1,x_2)=v^s+\Big[m_s(v)+\frac1\eps e^{\frac u\eps}\wL(\xi;s,\eps)\Big]\Delta_-,
\quad u\le\xi\le\infty,\ u=v+\eps=x_1+d.
}
Then, for $x \in \Xi\cii{\mathrm L+}$ we have
\eqlb{051001}{
B_2(x_1,x_2,x_3)=w_2(w_2^{-1}(x_3;p,\eps,x_1,x_2);r,\eps,x_1,x_2).
}

\paragraph{Domain $\Xi\cii{\mathrm R+}.$} The domain
\eqlb{121001}{
\begin{aligned}
\Xi\cii{\mathrm R+} =\; &\Big\{x\in\Omega_\eps^3\colon x_1\geq0,\ x_2\geq\eps^2,\ 
(p-2)\Ak{p}(x_1,x_2)\leq(p-2)x_3
\\
&\quad\leq(p-2)
\begin{cases}
(x_1+\Delta_-)^{p-2}x_2,&\quad\text{if }\ d\le x_1\le\Delta_+,\rule[-10pt]{0pt}{10pt}
\\
\displaystyle
\frac{\Delta_-(x_1-\Delta_+)^p+\Delta_+(x_1+\Delta_-)^p}{2\eps},
&\quad\text{if }\ x_1\ge\Delta_+;
\end{cases}
\end{aligned}
}
is foliated by the fans $\FR(u)$, $0\le u<\infty$ (see Lemma~\ref{080506} below). 

The function $B_2$ on $\FR(u)$ is defined by the formula 
\eqlb{eq080507}{
B_2(x)=v^r-\Big[k_r(v)-\frac1\eps e^{-\frac u\eps}\wR(\xi;r,\eps)\Big]\Delta_-,
}
where $v=x_1+\Delta_-$, $u=v-\eps$, the function $\xi\mapsto\wR(\xi;s,\eps)$ was defined by~\eqref{220904}:
\eqlb{eq060901}{
\wR(\xi; s,\eps)=
\begin{cases}
\eps e^{\frac{\xi}{\eps}}\big[k_s(\xi+\eps)-2\xi(\xi+\eps)^{s-2}\big],\quad
&\text{ if \ }0\le\xi\le\eps,
\\
e^{\frac\xi\eps}\big[\half\big((\xi-\eps)^s-(\xi+\eps)^s\big)+\eps k_s(\xi+\eps)\big],\quad
&\text{ if \ }\xi\ge\eps.
\rule{0pt}{15pt}
\end{cases}
}
The value of the variable $\xi=\xi(x;p,\eps)$ in~\eqref{eq080507} is the solution of the equation
\eqlb{141003}{
x_3=v^p-\Big[k_p(v)-\frac1\eps e^{-\frac u\eps}\wR(\xi;p,\eps)\Big]\Delta_-
}
running from $0$ till $u = x_1-d$.

Comparing~\eqref{eq080507} and~\eqref{141003} we see that we should introduce the following function $w_2$:
\eqlb{141004}{
w_2(\xi;s,\eps,x_1,x_2)=v^s-\Big[k_s(v)-\frac1\eps e^{-\frac u\eps}\wR(\xi;s,\eps)\Big]\Delta_-,
\quad 0\le\xi\le u,\ u=v-\eps=x_1-d.
}
Recall that here $v=x_1+\Delta_-$, not as in~\eqref{141002}, where $v=x_1-\Delta_-$. Again, for $x \in \Xi\cii{\mathrm R+}$ we have~\eqref{051001}.

\paragraph{Domain $F(0).$} The domain
\eqlb{071010}{
F(0)=\Big\{x\in\Omega_\eps^3\colon|x_1|\leq\eps,\ \eps|x_1|\leq x_2\leq x_1^2+\eps^2,
\quad (2-p)x_3\leq(2-p)(2\eps)^{p-2}x_2\Big\}
}
is foliated by the two-dimensional domains of linearity. The function $B_2$ does not depend on $x_1$:
\eqlb{eq051002}{
B_2(x_1,x_2,x_3) = B_2\Big(\frac{x_2}{2\eps},x_2,x_3\Big),
}
and the former point lies in $\FL(\eps)$. We can use~\eqref{eq10}, \eqref{161001}, and~\eqref{eq3} for $v=0$ to rewrite the right-hand side of~\eqref{eq051002} to obtain
\eqlb{071011}{
B_2(x)=\Big[e\PsiL\Big(\frac{2\eps^2 x_3}{ex_2}-\frac{\eps^p}e\Gamma(p+1)\Big)+
\eps^{r-1}\Gamma(r+1)\Big]\frac{x_2}{2\eps},\qquad x \in F(0).
}

However, we would like to write down this formula in another form. We prefer to have a description on each leaf of the foliation of~$F(0)$ separately. 
Now we have 
appropriate tools to describe the foliation of~$F(0)$. The right boundary of $F(0)$ (that is the 
boundary between $F(0)$ and $\Xi\cii{\mathrm L+}$) consists of the fan~$\FL(\eps)$, and the symmetrical 
boundary (that is the boundary between $F(0)$ and $\Xi\cii{\mathrm R-}$) consists of the symmetrical fan 
$\FR(-\eps)$. The extremals of a fan $\FL(u)$ are segments $[U(u-\eps),H(u,h)]$ that are parametrized 
either by the parameter $h$, or by the corresponding parameter $\xi$ (see~\eqref{021104}). 
The parameter~$h$ is running from zero (when the endpoint $H(u,h)$ is on the boundary) till 
$\half\big((u+\eps)^p-(u-\eps)^p\big)-\eps m_p(u-\eps)$ (when $H(u,h)$ is at the middle of the chord 
$[U(u-\eps),U(u+\eps)]$). Therefore, the extremals of the fan $\FL(\eps)$ are 
parametrized by the parameter $h$ running from zero till $\eps^p\big(2^{p-1}-\Gamma(p+1)\big)$. 
Each such extremal is a boundary of the two-dimensional domain of linearity $F_0(h)$ being the induced convex hull of the points 
$(0,0,0)$ and $(\pm\eps,2\eps^2,\eps m_p(0)+h)$. The whole domain $F(0)$ is foliated by $F_0(h)$. 
The triangle $F_0(0)$ is on the boundary of $\Omega^3_\eps$ and the triangle 
$F_0\big((2^{p-1}-\Gamma(p+1))\eps^p\big)$ separates $F(0)$ from $R$.

Let us compare now formula~\eqref{161001} with~\eqref{141001} taking into account that 
$\Delta_-=x_1=\frac{x_2}{2\eps}$ for $\FL(\eps)$ (see~\eqref{190901}). We see that $h=e\cdot \wL(\xi;p,\eps)$,
and~\eqref{eq080505} supplies us with the formula for $B_2$ on $F_0(h)$:
$$
B_2(x)=\left[m_r(0)+\frac{e}{\eps}\wL\Big(\wL^{-1}\Big(\frac{h}{e};p,\eps\Big);r,\eps\Big)\right]\frac{x_2}{2\eps}.
$$
This formula coincides with~\eqref{071011}, since $m_s(0)=\eps^{s-1}\Gamma(s+1)$.

Instead of parameter $h$ we can parametrize the leaves of $F(0)$ by the parameter $\xi$ from~\eqref{021104}
running from $\eps$ to $+\infty$. Then
\eqlb{220601}{
x_3=\Big[m_p(0)+\frac e\eps\wL(\xi;p,\eps)\Big]\frac{x_2}{2\eps}
}
and
\eqlb{220602}{
B_2(x)=\Big[m_r(0)+\frac e\eps\wL(\xi;r,\eps)\Big]\frac{x_2}{2\eps}.
}
Therefore, for this domain it is natural to introduce the function $w_2$ by the formula
\eqlb{181001}{
w_2(\xi;s,\eps,x_1,x_2)=\Big[\eps^s\Gamma(s+1)+e \wL(\xi;s,\eps)\Big]\frac{x_2}{2\eps^2},
\quad \eps\le\xi\le+\infty,
}
and then, as before,
\eqlb{181002}{
B_2(x_1,x_2,x_3)=w_2(w_2^{-1}(x_3;p,\eps,|x_1|,x_2);r,\eps,|x_1|,x_2).
}

\paragraph{Domain $R$.} The function $B_2$ has a very simple description in the domain $R$ (see~\eqref{eq070502}): 
\eqlb{191001}{
B_2(x_1,x_2,x_3)=x_3^{\frac{r-2}{p-2}}x_2^{\frac{p-r}{p-2}}.
}
However, if we would like to present $B_2$ in  a form such as~\eqref{181002}, we should use
another representation from~\eqref{eq070502}:
\eqlb{191002}{
B_2(x_1,x_2,x_3)=v^{r-2}x_2,\qquad x\in R(v),\quad 0\le v\le2\eps,
}
together with formula for the third coordinate
\eqlb{191003}{
x_3=v^{p-2}x_2,\qquad x\in R(v),\quad 0\le v\le2\eps.
}
This gives us the same formula~\eqref{181002}, if we take
$$
w_2(\xi;s,\eps,x_1,x_2)=v(\xi)^{s-2}x_2,
$$
with some monotone parametrization $v(\xi)$. In what follows we will use not the definition of the 
form~\eqref{181002}, but the direct definition~\eqref{191001}. It is the reason why we will not specify the function
$v(\xi)$, this may be done in many different ways.

\paragraph{Domain $\Xi\cii{\mathrm{ch}+}$.}  Finally, on the domain $\Xi\cii{\mathrm{ch}+}$ we will use formula~\eqref{eq080509} for our Bellman 
candidate $B_2$ on a chord $[U(a),U(b)]$:
\eqlb{090601}{
B_2(x)=\frac{x_1-a}{b-a}b^r+\frac{b-x_1}{b-a}a^r,
}
where the parameters $a$ and $b$ are considered as functions of $x$ determined by the pair of equations
\eqlb{231001}{
x_2=(a+b)x_1-ab
}
and
\eqlb{231002}{
x_3=\frac{x_1-a}{b-a}b^p+\frac{b-x_1}{b-a}a^p.
}
We have used the fact that $x\in[U(a),U(b)]$.

Comparing~\eqref{090601} with~\eqref{231002}, it is easy to represent the function $B_2$ in 
the same form~\eqref{181002}. To do this it suffices to parametrize all chords passing through a 
point $x$ with fixed $x_1$ and $x_2$ by some parameter $\xi$ and consider some functions 
$\xi\mapsto a(\xi;x_1,x_2)$ and $\xi\mapsto b(\xi;x_1,x_2)$ rather than functions $a$ and $b$ dependent
on $x$. After such a choice we can put
$$
w_2(\xi;s,\eps,x_1,x_2)=\frac{x_1-a}{b-a}b^s+\frac{b-x_1}{b-a}a^s
$$
to obtain the representation~\eqref{181002}.
This also may be done in many different ways. We will not use such a representation,
since we have not found the choice of the parameter $\xi$ that essentially simplifies all calculations.

\section{Foliation. Proofs}
\label{FoliationProofs}

In this section we formally prove that the foliation corresponding to the candidate~$B_2$ is precisely the foliation announced in Section~\ref{SectFoliation}.

\begin{Le} 
\label{le300401}
Fix a point $(x_1,x_2)\in\omega_2 \cup \omega_3 \cup \omega_4$ \textup(see~\eqref{041101}\textup). Consider all the pairs $a,b >0$ such that the chord $[U(a),U(b)]$ contains the point of the form $(x_1,x_2, \;\cdot\;)$. Then\textup, the third coordinate of this point\textup, considered as a function of $a,$ 
is strictly increasing for $p>2$ and strictly decreasing for $p<2$.
\end{Le}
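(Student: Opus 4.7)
The plan is to parametrize the admissible chords by their smaller endpoint $a$, obtain a one-variable expression for the third coordinate, and compute $dx_3/da$ directly. First I fix $(x_1,x_2)\in\omega_2\cup\omega_3\cup\omega_4$; in particular $x_1>0$ and $x_2>x_1^2$. For $a\in[0,x_1)$ the constraint \eqref{231001} determines a unique second endpoint $b=b(a)=(x_2-ax_1)/(x_1-a)>x_1$, and implicit differentiation of \eqref{231001} yields $b'(a)=(b-x_1)/(x_1-a)>0$. Substituting into \eqref{231002} turns $x_3$ into the one-variable function
$$
x_3(a)=\frac{(b-x_1)\,a^p+(x_1-a)\,b^p}{b-a}.
$$

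The central step will be to differentiate this expression in $a$, using the chain rule in $b(a)$, and to gather all the terms into the product form
$$
\frac{dx_3}{da}=\frac{b-x_1}{(b-a)^2}\Big[p(b-a)(a^{p-1}+b^{p-1})-2(b^p-a^p)\Big].
$$
Once this identity is in hand, the sign is easy to read off: the bracketed quantity is precisely $\AAA(\alpha,\beta,p)$ from \eqref{051003} with $\alpha=(a+b)/2$ and $\beta=(b-a)/2$, so the integral representation \eqref{240403} gives
$$
p(b-a)(a^{p-1}+b^{p-1})-2(b^p-a^p)=p(p-1)(p-2)\int_{-\beta}^{\beta}(\beta^2-\lambda^2)(\lambda+\alpha)^{p-3}\,d\lambda.
$$
For $0<a<b$ one has $\lambda+\alpha\geq a>0$ throughout the integration interval, so the integrand is strictly positive, and hence the bracket has the sign of $p(p-1)(p-2)$, which for $p>1$ coincides with $\sign(p-2)$. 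Combined with $b-x_1>0$ and $(b-a)^2>0$, this yields $\sign(dx_3/da)=\sign(p-2)$, giving the strict monotonicity as stated.

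The hard part will be the algebraic reduction of $dx_3/da$ to the displayed product form: collecting the terms coming from the direct $a$-dependence together with those arising from $b'(a)=(b-x_1)/(x_1-a)$ is routine but requires careful bookkeeping. Everything else --- identifying the bracket with $\AAA(\alpha,\beta,p)$, invoking \eqref{240403}, and checking positivity of the integrand --- is essentially immediate.
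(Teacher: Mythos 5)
Your argument is correct and follows essentially the same route as the paper: both reduce everything to the sign of $\AAA\big(\tfrac{a+b}{2},\tfrac{b-a}{2},p\big)$ via the integral representation \eqref{240403}, the only difference being that you differentiate $x_3$ with respect to $a$ (eliminating $b$ through \eqref{231001}), while the paper differentiates $a,b$ with respect to $x_3$ and reads off $\sign a_{x_3}=\sign(p-2)$. Your displayed formula for $dx_3/da$ is indeed correct, so the remaining bookkeeping is routine as you say.
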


\begin{proof}
First we differentiate relation~\eqref{231001} with respect to $x_3$. As a result, we get
\eqlb{eq300403}{
b_{x_3}(x_1-a)=a_{x_3}(b-x_1)\,.
}
Therefore, both endpoints of the chord $[U(a),U(b)]$ move in one and the same direction. Without lost of generality, we may
assume $a<x_1<b$. 

Now, we calculate the derivative of the function
\eqlb{230601}{
w(a,b;s)=\frac{(b-x_1)a^s+(x_1-a)b^s}{b-a}.
}
We have
\eqlb{261211}{
\begin{aligned}
w_{x_3}\buildrel\phantom{\eqref{eq300403}}\over=
&\frac{\big[a^sb_{x_3}\!\!+\!(b\!-\!x_1)sa^{s-1}a_{x_3}\!\!-\!
b^sa_{x_3}\!\!+\!(x_1\!-\!a)sb^{s-1}b_{x_3}\big](b\!-\!a)\!-\!
(b_{x_3}\!\!-\!a_{x_3})\big[(b\!-\!x_1)a^s\!+\!(x_1\!-\!a)b^s\big]}{(b-a)^2}
\\
\buildrel\phantom{\eqref{eq300403}}\over=
&\frac{a_{x_3}(b-x_1)\big[sa^{s-1}(b-a)+a^s-b^s\big]+b_{x_3}(x_1-a)\big[sb^{s-1}(b-a)+a^s-b^s\big]}{(b-a)^2}
\\
\buildrel\eqref{eq300403}\over=
&\frac{a_{x_3}(b-x_1)}{(b-a)^2}\big[s(b-a)(a^{s-1}+b^{s-1})+2a^s-2b^s\big]=
\frac{a_{x_3}(b-x_1)}{(b-a)^2}\AAA\big(\half(b+a),\half(b-a),s\big)\,,
\end{aligned}
}
where the function $\AAA$ was defined in~\eqref{051003}. We see that
$\sign\AAA(\alpha,\beta,s)=\sign(s-2)$ from the integral representation~\eqref{240403}. Since $x_3=w(a,b;p)$ (see~\eqref{231002}) and $x_1<b$, we immediately conclude that 
$\sign a_{x_3}=\sign(p-2)$. This gives us the conclusion of the lemma.
\end{proof}

\begin{Le} 
\label{070500}
The domain $R$ is foliated by leaves $R(v)$\textup, $0\le v\le2\eps$. If we keep the
first two coordinates of a point $x\in R(v)$ fixed\textup, we get a function $v\mapsto x_3$. 
This function $x_3(v)$ is strictly increasing for $p>2$ and strictly decreasing for $p<2$.
\end{Le}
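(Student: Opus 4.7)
The key observation is that each leaf $R(v)$ is planar, lying in the plane $\Pi_v\colon x_3=v^{p-2}x_2$. The three defining points $U(0)=(0,0,0)$ and $U(\pm v)=(\pm v,v^2,v^p)$ obviously satisfy this equation. For $0\le v\le\eps$, $R(v)$ is just the triangle they span, so there is nothing more to check. For $\eps\le v\le 2\eps$, the additional pieces of the induced convex hull---the tangent segments from $U(\pm v)$ to the parabolic boundary and the parabolic arc between the tangency points at $x_1=\pm(v-\eps)$---lie in $\Pi_v$ as well: the third coordinate of each tangency point is forced to be $v^{p-2}\big((v-\eps)^2+\eps^2\big)$ in order to make the tangent segment coplanar with $U(\pm v)$ and $U(0)$, after which the parabolic arc, whose third coordinate may be chosen freely on the cylinder $\{x_2=x_1^2+\eps^2\}$, is taken to satisfy $x_3=v^{p-2}x_2$ as well.

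Once this is established, the monotonicity statement is immediate: on each leaf $x_3=v^{p-2}x_2$, so for fixed $x_2>0$ we have $\partial x_3/\partial v=(p-2)v^{p-3}x_2$, whose sign equals $\sign(p-2)$.

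It remains to verify the foliation statement itself, i.e.\ that every $x\in R$ with $x_2>0$ lies in a unique $R(v)$ with $v\in[0,2\eps]$. The plane equation dictates the candidate $v=(x_3/x_2)^{1/(p-2)}$, so uniqueness is automatic; existence reduces to a case-by-case check through $R_0,R_{\pm1},R_{\pm2},R_{\pm3}$ that the analytic bounds on $x_3$ in the definitions~\eqref{111201}--\eqref{111204} translate, via the monotonicity of $v\mapsto v^{p-2}$ whose direction is compensated by the $(2-p)$ prefactor, into the bound $v\in[0,2\eps]$, and that the constraint $(x_1,x_2)\in\omega_i$ agrees with membership in the projection $\pi(R(v))$. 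This projection is explicit: a triangle with vertices $(0,0),(\pm v,v^2)$ for $v\le\eps$, and for $v\ge\eps$ the analogous region whose sides are the two chords from the origin, the two projected tangent segments $S_-(v)$ and $S_+(-v)$, and the parabolic arc $x_2=x_1^2+\eps^2$ with $|x_1|\le v-\eps$.

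The main obstacle is the bookkeeping in this last step, particularly the transition between the two regimes at $v=\eps$ along the curve $x_2=\eps^2$ and the verification that the upper bound $x_3=(|x_1|+\Delta_-)^{p-2}x_2$ appearing in $R_{\pm 1}$ and $R_{\pm 3}$ corresponds precisely to the leaf whose projected tangent segment $S_-(v)$ passes through $(x_1,x_2)$. The latter reduces to the algebraic identity $v=|x_1|+\Delta_-$ along $S_-(v)$, which follows directly from $\Delta_\pm=\eps\pm\sqrt{x_1^2+\eps^2-x_2}$ and the parametrization of the tangent line from $U(v)$ to the upper parabola. The analogous identities on the chord $x_2=v|x_1|$ and on the parabolic arc handle the remaining boundary cases.
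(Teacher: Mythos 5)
Your proposal is correct and takes essentially the same route as the paper: the monotonicity is read off the plane equation $x_3=v^{p-2}x_2$ of each leaf, and the foliation claim is reduced to the same case-by-case matching of the bounds in \eqref{111201}--\eqref{111204} with the range of $v$ for which $(x_1,x_2)$ lies in the projection of $R(v)$. The paper phrases this by sweeping $v$ from $0$ to $2\eps$ and recording the entry/exit values $v=\sqrt{x_2}$, $v=|x_1|+\Delta_-$, $v=x_2/|x_1|$, $v=2\eps$, which are exactly the boundary identities you invoke.
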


\begin{proof} Since by definition $R(v)$ is the induced (in $\Omega^3_\eps$) convex hull of the points 
$U(0)$ and $U(\pm v)$, it is a part of the two-dimensional plane $x_3=v^{p-2}x_2$. From this formula the second statement of the Lemma is clear: the function $x_3(v)$ is strictly increasing for 
$p>2$ and strictly decreasing for $p<2$. 

To prove the first statement we need to consider four subdomains $R_i$ separately (see~\eqref{R}--\eqref{111204}). The projection on the $x_1x_2$-plane of
the points from $R_0$ (see~\eqref{111201}) lie in $\omega_0$ (see~\eqref{041101}), see Fig.~\ref{omega_domains}. Now, we will look at the domains $R(v)$ when $v$ increases from zero
to $2\eps$. For a fixed point $(x_1,x_2)\in\omega_0$ the first moment when the projection of $R(v)$ to the 
first two coordinates contains the point $(x_1,x_2)$ is $v=\sqrt{x_2}$, and the point $(x_1,x_2,x_3(v))$ 
lies in $R(v)$ till $v=2\eps$. That means that $x_3(v)$ continuously varies 
from $x_2^{p/2}$ till $(2\eps)^{p-2}x_2$. This is just what is written in formula~\eqref{111201}, 
where the factor $(2-p)$ reflects the fact that the function $x_3(v)$ is increasing for $p>2$ 
and decreasing for $p<2$.

Now, we consider in a similar way the case when $(x_1,x_2) \in \omega_1$. The first moment when the projection of 
$R(v)$ contains this point is $v=x_1+\Delta_-$, i.\,e., when the point 
$(x_1,x_2)$ is on the tangent line passing through $(v,v^2)$. The point $(x_1,x_2,x_3(v))$ lies 
in $R(v)$ till $v=2\eps$.  That means that $x_3(v)$ continuously varies from 
$(x_1+\Delta_-)^{p-2}x_2$ till $(2\eps)^{p-2}x_2$, as it is written in formula~\eqref{111202}.

For the case when $(x_1,x_2) \in \omega_2$, the first moment when the projection of $R(v)$ contains this point
is the same as for $\omega_0$, i.\,e., $v=\sqrt{x_2}$. But now, the point $(x_1,x_2,x_3(v))$ lies 
in $R(v)$ only till the moment when $x\in[U(0),U(v)]$, i.\,e., till $v=x_2/x_1$.  That means that $x_3(v)$ 
continuously varies from $x_2^{p/2}$ till~$x_1^{2-p}x_2^{p-1}$, as it is written in formula~\eqref{111203}.

Finally, for the case when $(x_1,x_2) \in \omega_3$, the first moment when projection of $R(v)$ contains 
this point is the same as for $\omega_1$, i.\,e., $v=x_1+\Delta_-$. As in the preceding case, the point 
$(x_1,x_2,x_3(v))$ lies in $R(v)$ till the moment when $x\in[U(0),U(v)]$, i.\,e., till $v=x_2/x_1$. 
That means that $x_3(v)$ continuously varies from $(x_1+\Delta_-)^{p-2}x_2$ till $x_1^{2-p}x_2^{p-1}$, 
as it is written in formula~\eqref{111204}.
\end{proof}

\begin{Le} 
\label{080504}
The domain $\Xi\cii{\mathrm L+}$ is foliated by the fans $\FL(u)$\textup, $\eps\le u<\infty$.
\end{Le}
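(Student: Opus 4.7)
The plan is to exhibit, for each $x\in\Xi\cii{\mathrm L+}$, a unique pair $(u,h)$ with $u\ge\eps$ and $h$ in the admissible range for $\FL(u)$ such that $x$ lies on the extremal $[U(u-\eps),H(u,h)]$, and then to verify the converse inclusion that every such extremal lies in $\Xi\cii{\mathrm L+}$.

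First I would determine $u$ from the first two coordinates of $x$. The plane $P(u)$ has equation $x_2=2ux_1+\eps^2-u^2$, so the left-tangent root (the one with $u\ge x_1$) is $u=x_1+d$, where $d=\sqrt{x_1^2+\eps^2-x_2}$. The requirement $u\ge\eps$ reads $x_1\ge\eps-d=\Delta_-$, which is precisely one of the defining inequalities of $\Xi\cii{\mathrm L+}$. The projection $(x_1,x_2)$ automatically lies on $S_+(v)$ with $v=u-\eps$, since the conditions $v\le x_1\le v+\eps$ amount to $0\le d\le\eps$, a consequence of $(x_1,x_2)\in\Omega^2_\eps$.

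Next I would recover $h$ from $x_3$. Linear interpolation along the segment $[U(v),H(u,h)]$ at first coordinate $x_1$ gives $x_3=v^p+\Delta_-\bigl(m_p(v)+\tfrac{h}{\eps}\bigr)$, which is strictly monotone in $h$ whenever $\Delta_->0$. The endpoint values are: at $h=0$ one gets $x_3=\Am{p}(x_1,x_2)$ by the formula for $\Am{p}$ on $S_+(v)$; and at $h=h_{\max}(u):=\tfrac12\bigl((u+\eps)^p-(u-\eps)^p\bigr)-\eps m_p(u-\eps)$ one gets $x_3=\frac{\Delta_-(x_1+\Delta_+)^p+\Delta_+(x_1-\Delta_-)^p}{2\eps}$, the chord value at $x_1$ along $[U(u-\eps),U(u+\eps)]$, obtained by a short rearrangement using $\Delta_++\Delta_-=2\eps$ together with $u-\eps=x_1-\Delta_-$ and $u+\eps=x_1+\Delta_+$. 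These are exactly the two expressions appearing in the definition of $\Xi\cii{\mathrm L+}$; the factor $(2-p)$ in that definition correctly orders them for both $p<2$ and $p>2$, since $\sign h_{\max}=\sign(2-p)$. Hence $h$ is uniquely determined, and the defining inequalities of $\Xi\cii{\mathrm L+}$ are equivalent to $h$ lying in the admissible parameter range of $\FL(u)$.

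The converse inclusion follows by reading the same identifications in the reverse direction. Pairwise disjointness of different fans is automatic because they lie in different planes $P(u)$, and any two such planes meet in a vertical line $(x_1,x_2)=\mathrm{const}$ which intersects each $S_+(v)$ in at most one point. I do not expect any serious technical obstacle; the lemma is essentially a bookkeeping verification that the definition of $\Xi\cii{\mathrm L+}$ was crafted to match the geometric sweep of the family $\FL(u)$, with the only delicate points being the degenerate configurations $\Delta_-=0$ (where $x=U(v)$ is the common apex of the fan and lies on every extremal) and $d=0$ (where $x$ lies on the upper parabolic boundary of $\Omega^2_\eps$), both of which are resolved by continuity.
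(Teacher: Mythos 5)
Your proposal is correct and follows essentially the same route as the paper: identify $u=x_1+d$ from the left tangent through $(x_1,x_2)$ (so $u\ge\eps$ exactly when $x_1\ge\Delta_-$), note the fans project onto disjoint segments $S_+(u-\eps)$, and check that as $h$ sweeps $[0,h_{\max}(u)]$ the coordinate $x_3$ sweeps exactly from $\Am{p}(x_1,x_2)$ to the chord value $\frac{\Delta_-(x_1+\Delta_+)^p+\Delta_+(x_1-\Delta_-)^p}{2\eps}$, which are precisely the bounds in~\eqref{eq080501}. The only differences are cosmetic (explicit monotonicity in $h$ and the remarks on the degenerate cases $\Delta_-=0$, $d=0$), so no further changes are needed.
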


\begin{proof}
First, we note that the fans $\FL(u)$ do not intersect. Indeed, the projection of $\FL(u)$ onto the \hbox{$x_1x_2$-plane}
is the tangent $S_+(u-\eps)$ and these lines do not intersect (see the beginning of Subsection~\ref{2dimBell} and 
Fig.~\ref{fig161101} there). 

The fact that $\FL(u)$ foliate the whole domain $\Xi\cii{\mathrm L+}$ is almost evident.
Indeed, for every point $x\in\Xi\cii{\mathrm L+}$ (see~\eqref{eq080501}) we have $x_1\ge\Delta_-$.
This is equivalent to the assertion that a left tangent line $S_+(v)$ passes through $(x_1,x_2)$
with $v = u_+(x_1,x_2)\ge 0$, see Fig.~\ref{fig161101}. Thus, we have to consider the fan $\FL(u)$ with $u=v+\eps$. Recall that the extremal
lines in this fan are parametrized by $h$ (see~\eqref{210902}), which runs from zero till
$\half\big((u+\eps)^p-(u-\eps)^p\big)-\eps m_p(u-\eps)$. This means that $x_3$ (see~\eqref{291001}) 
runs from $\Am{p}(x_1,x_2)$ till 
$$
(u-\eps)^p+\frac{(u+\eps)^p-(u-\eps)^p}{2\eps}\Delta_-=\frac{(x_1+\Delta_+)^p\Delta_-+(x_1-\Delta_-)^p\Delta_+}{2\eps}\,,
$$
because $u+\eps=x_1+\Delta_+$ and $u-\eps=x_1-\Delta_-$. These are the boundary values for $x_3$ described 
in~\eqref{eq080501}.
\end{proof}

\begin{Le} 
\label{080506}
The domain $\Xi\cii{\mathrm R+}$ is foliated by the fans $\FR(u)$\textup, $0\le u<\infty$.
\end{Le}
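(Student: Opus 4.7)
The plan is to mirror the argument in Lemma~\ref{080504}, with the extra bookkeeping needed to handle the two regimes $0\le u\le\eps$ and $u\ge\eps$ that produce the two-branch description of the upper envelope of $\Xi\cii{\mathrm R+}$ in~\eqref{eq080502}. There are three things to verify: that different fans are disjoint, that every $x\in\Xi\cii{\mathrm R+}$ belongs to one, and that the full range of $x_3$ prescribed in~\eqref{eq080502} is swept out by the extremal fiber of the chosen fan.

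First, I would project $\FR(u)$ to the $x_1x_2$-plane: the projection is the tangent segment $S_-(u+\eps)$, and by the uniqueness of $u_-(x_1,x_2)$ recalled in Subsection~\ref{2dimBell}, distinct values of $u$ give disjoint projections and hence disjoint fans in $\R^3$. Second, for $x\in\Xi\cii{\mathrm R+}$ the conditions $x_1\ge0$ and $x_2\ge\eps^2$ force $d=\sqrt{x_1^2+\eps^2-x_2}\le x_1$, so setting $u:=x_1-d$ produces a non-negative $u$ with $(x_1,x_2)\in S_-(u+\eps)$, i.e.\ $x$ lies over the fan $\FR(u)$.

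Third, at a fixed $(x_1,x_2)$ on this projection, formula~\eqref{011102} shows that $x_3$ is affine in the parameter $h$ with positive slope $\Delta_-/\eps$, so the extremal fiber realizes exactly the interval $[x_3(0),x_3(h_{\max})]$. At $h=0$ the endpoint is $W_-(v)$, which lies on the graph of $\Ak{p}$ and yields the lower bound $\Ak{p}(x_1,x_2)$ of~\eqref{eq080502}. At $h=h_{\max}$ I would plug in the two possible expressions and simplify. For $u\ge\eps$ (equivalently $x_1\ge\Delta_+$), the substitutions $u\pm\eps=x_1+\Delta_-$ or $x_1-\Delta_+$ together with $\Delta_++\Delta_-=2\eps$ should convert $v^p-\tfrac{\Delta_-}{2\eps}\big((u+\eps)^p-(u-\eps)^p\big)$ into $\frac{\Delta_+(x_1+\Delta_-)^p+\Delta_-(x_1-\Delta_+)^p}{2\eps}$. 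For $0\le u\le\eps$ (equivalently $x_1\le\Delta_+$) the upper value reduces to $v^{p-2}(v^2-2u\Delta_-)$, and the algebraic identity $v^2-2u\Delta_-=x_2$, which in turn follows from $\Delta_+\Delta_-=\eps^2-d^2=x_2-x_1^2$, produces $(x_1+\Delta_-)^{p-2}x_2$. Both outcomes match~\eqref{eq080502} in the appropriate half of the domain.

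The main obstacle is not conceptual but notational: one must shuttle between the parameters $u$, $h$, $v=u+\eps=x_1+\Delta_-$, and $\Delta_\pm$ carefully enough to check the key identity $v^2-2u\Delta_-=x_2$, which is precisely what makes the common boundary of the two branches in~\eqref{eq080502} meet consistently at $x_1=\Delta_+$. Once this identity is verified, the proof reduces to the same affine-sweep scheme used in Lemma~\ref{080504}.
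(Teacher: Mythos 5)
Your proposal is correct and follows essentially the same route as the paper: disjointness of the fans via their projections onto the tangent segments $S_-(u+\eps)$, existence of the fan through a given point from $x_1\ge 0$, $x_2\ge\eps^2$ (so $u=x_1-d\ge0$), and the affine sweep of $x_3$ in $h$ from $\Ak{p}(x_1,x_2)$ to the two-branch upper expression, using $u\pm\eps=x_1+\Delta_-,\,x_1-\Delta_+$ and the identity $v^2-2u\Delta_-=x_2$. These are exactly the computations in the paper's proof of Lemma~\ref{080506}.
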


\begin{proof}
The reasoning here is the same as in the preceding lemma. The fans $\FR(u)$ do not intersect because 
their projections onto the $x_1x_2$-plane are disjoint tangent lines $S_-(u+\eps)$. Thus, we have to check
that for any $x$, $x\in\Xi\cii{\mathrm R+}$ (see~\eqref{eq080502}), there exists $u\ge0$ such that $x\in\FR(u)$.

First, we note that $x_2\ge\eps^2$ for $x\in\Xi\cii{\mathrm R+}$. This is equivalent to the assertion that some right 
tangent line $S_-(v)$ passes through $(x_1,x_2)$ with $v\ge\eps$. Thus, we have to consider the fan $\FR(u)$ 
with $u=v-\eps$. Recall that the extremal lines in this fan are parametrized by $h$ (see~\eqref{210903}) 
running from zero till $\half\big((u-\eps)^p-(u+\eps)^p\big)+\eps k_p(u+\eps)$ if $u\ge\eps$, and till 
$\eps k_p(u+\eps)-2\eps u(u+\eps)^{p-2}$ if $0\le u\le\eps$. This means that in the case $u\ge\eps$
(i.\,e., $x_1\ge\Delta_+$) $x_3$ runs (see~\eqref{011102}) from $\Ak{p}(x_1,x_2)$ till 
\eqlb{181201}{
(u+\eps)^p+\frac{(u-\eps)^p-(u+\eps)^p}{2\eps}\Delta_-=\frac{(x_1-\Delta_+)^p\Delta_-+(x_1+\Delta_-)^p\Delta_+}{2\eps}\,,
}
because $u+\eps=x_1+\Delta_-$ and $u-\eps=x_1-\Delta_+$. In the case $0\le u\le\eps$ the coordinate
$x_3$ runs from $\Ak{p}(x_1,x_2)$ till 
\eqlb{181202}{
(x_1+\Delta_-)^{p-2}x_2\,,
}
because $v=x_1+\Delta_-=u+\eps$ and $v^2-2u\Delta_-=x_2$.
The boundary values for $x_3$ described in~\eqref{181201} and~\eqref{181202} are given in~\eqref{eq080502}.
\end{proof}

\begin{Le} 
\label{080508}
The domain $\Xi\cii{\mathrm{ch}+}$ is foliated by the chords $[U(a),U(b)]$\textup, $0\le a<\infty$\textup, 
$a\le b\leq a+2\eps$.
\end{Le}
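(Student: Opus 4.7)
My plan is to fix $(x_1,x_2)$ in the projection of $\Xi\cii{\mathrm{ch}+}$ onto the first two coordinates and to parametrize the one-parameter family of chords $[U(a),U(b)]$ passing through the vertical fiber above $(x_1,x_2)$. From the definitions~\eqref{041101} one verifies that this projection coincides with $\omega_2\cup\omega_3\cup\omega_4$, so Lemma~\ref{le300401} is applicable. The relation $x_2=(a+b)x_1-ab$ expresses $b$ as a function of $a\in[0,x_1)$, and a short calculation gives
\[
b-a \;=\; (x_1-a)+\frac{x_2-x_1^2}{x_1-a}.
\]
Therefore the constraint $b-a\leq 2\eps$ amounts to $x_1-a\in[\Delta_-,\Delta_+]$; combined with $a\geq 0$ this singles out a closed interval of admissible values of $a$.

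I would next identify the endpoints of this interval. If $x_1\geq\Delta_+$, i.e., $(x_1,x_2)\in\omega_4$, then $a$ ranges over $[x_1-\Delta_+,\,x_1-\Delta_-]$, and both endpoints yield the two long chords with $b-a=2\eps$. If $\Delta_-\leq x_1\leq\Delta_+$, i.e., $(x_1,x_2)\in\omega_2\cup\omega_3$, then $a$ ranges over $[0,\,x_1-\Delta_-]$: the right endpoint is again the long chord $[U(x_1-\Delta_-),U(x_1+\Delta_+)]$, whereas the left endpoint $a=0$ gives the chord $[U(0),U(x_2/x_1)]$. Plugging each endpoint into~\eqref{eq080509} recovers exactly the boundary expressions in~\eqref{eq080503}: the two long chords produce $\frac{\Delta_-(x_1+\Delta_+)^p+\Delta_+(x_1-\Delta_-)^p}{2\eps}$ and $\frac{\Delta_-(x_1-\Delta_+)^p+\Delta_+(x_1+\Delta_-)^p}{2\eps}$ respectively, while the chord through $U(0)$ produces $x_1^{2-p}x_2^{p-1}$.

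To conclude, I invoke Lemma~\ref{le300401}: as $a$ runs continuously through the admissible interval, $x_3(a)$ is strictly monotone with $\sign x_3'(a)=\sign(p-2)$ (the endpoint $a=0$ is included by continuity). Hence $x_3$ maps the parameter interval bijectively onto the interval of $x_3$-values prescribed by~\eqref{eq080503}, which is precisely the foliation statement. The main obstacle is the case distinction: the ``inner'' endpoint of the $a$-interval switches from a long chord (in $\omega_4$) to the chord through $U(0)$ (in $\omega_2\cup\omega_3$), and one must verify that the two analytic pieces of the description~\eqref{eq080503} join continuously along the surface $x_2=2\eps x_1$ with $x_1\geq\eps$ (where $x_1=\Delta_+$); a direct substitution shows that both formulas collapse to $\Delta_+(2\eps)^{p-1}$ there.
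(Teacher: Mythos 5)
Your argument is correct and follows essentially the same route as the paper: fix $(x_1,x_2)$, show the admissible parameter $a$ ranges over an interval whose endpoints are $\max(0,x_1-\Delta_+)$ and $x_1-\Delta_-$, evaluate $x_3$ at those endpoints to recover the bounds in~\eqref{eq080503}, and invoke Lemma~\ref{le300401} for strict monotonicity of $x_3(a)$. The only additions you make beyond the paper's terse proof are the explicit derivation of the $a$-interval from $b-a\le 2\eps$ and $a\ge0$, and the continuity check at $x_1=\Delta_+$; both are sound.
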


\begin{proof}
The fact that the chords $[U(a),U(b)]$ are disjoint was proved in Lemma~\ref{le300401}. Thus, we only have to verify that for any $x$, $x\in\Xi\cii{\mathrm{ch}+}$ (see~\eqref{eq080503}), there exist $a\ge0$ and 
$b\ge a$ such that $x\in[U(a),U(b)]$.

If $(x_1,x_2)\in\omega_4$ (i.\,e., if $x_1\ge\Delta_+$), then $x\in[U(a),U(b)]$ when $a$ runs from $x_1-\Delta_+$
till $x_1-\Delta_-$, and therefore, $b$ runs from $x_1+\Delta_-$ till $x_1+\Delta_+$. Due to formula~\eqref{231002},
we see that $x_3$ runs from $\big[\Delta_-(x_1-\Delta_+)^p+\Delta_+(x_1+\Delta_-)^p\big]/(2\eps)$ till
$\big[\Delta_-(x_1+\Delta_+)^p+\Delta_+(x_1-\Delta_-)^p\big]/(2\eps)$. These are exactly the bounds for $x_3$ as
stated in~\eqref{eq080503}.

If $(x_1,x_2)\in\omega_2\cup\omega_3$ (i.\,e., if $\Delta_-\le x_1\le\Delta_+$), then $x\in[U(a),U(b)]$ when 
$a$ runs from zero till $x_1-\Delta_-$, and therefore, $b$ runs from $x_2/x_1$ till $x_1+\Delta_+$. 
Again formula~\eqref{231002} yields that $x_3$ runs from $x_2^{p-1}x_1^{2-p}$ till
$\big[\Delta_-(x_1+\Delta_+)^p+\Delta_+(x_1-\Delta_-)^p\big]/(2\eps)$. These are the bounds for $x_3$
given by another part of~\eqref{eq080503}.
\end{proof}

\medskip

Completing this section, we collect some information concerning the order of subdomains with different types 
of foliation over an arbitrary point $(x_1,x_2)\in\Omega^2_\eps$ and the formulas for the boundaries between
different layers. This will be especially important when we glue the solutions in different subdomains and
verify that the obtained candidate is $C^1$-smooth.

For a fixed point $(x_1,x_2) \in \omega_j$ we list the domains and their boundaries over this point separated by~$|$ symbol. These objects are listed from up to down in the case~$p<2$ and in the reverse order in the case~$p>2$.

\eqlb{201202}{
(x_1,x_2)\in\omega_0\colon\quad x_3=\Ak{p}(x_1,x_2)\ \Big|\ R\ \Big|\ x_3=(2\eps)^{p-2}x_2\ \Big|\ F(0)\ 
\Big|\ x_3=\Am{p}(x_1,x_2)\,;
}

\eqlb{201203}{
\begin{aligned}
(x_1,x_2)\in\omega_1\colon\quad x_3=\Ak{p}(x_1,x_2)\ &\Big|\ \Xi\cii{\mathrm R+}
\Big|\ x_3=(x_1+\Delta_-)^{p-2}x_2\ \Big|\ R\ \Big|\ x_3=(2\eps)^{p-2}x_2\ \Big|
\\
&\Big|\ F(0)\ \Big|\ x_3=\Am{p}(x_1,x_2)\,;
\end{aligned}
}

\eqlb{201204}{
\begin{aligned}
(x_1,x_2)\in\omega_2\colon\quad x_3=&\Ak{p}(x_1,x_2)\ \Big|\ R\ \Big|\ x_3=x_2^{p-1}x_1^{2-p}\ \Big|
\ \Xi\cii{\mathrm {ch}+}\ \Big|
\\
&\Big|\ x_3=\frac{\Delta_-(x_1+\Delta_+)^p+\Delta_+(x_1-\Delta_-)^p}{2\eps}\ 
\Big|\ \Xi\cii{\mathrm L+} \Big|\ x_3=\Am{p}(x_1,x_2)\,;
\end{aligned}
}

\eqlb{201205}{
\begin{aligned}
(x_1,x_2)\in\omega_3\colon\quad x_3=&\Ak{p}(x_1,x_2)\ \Big|\ \Xi\cii{\mathrm R+}\Big|\ x_3=
(x_1+\Delta_-)^{p-2}x_2\ \Big|\ R\ \Big|\ x_3=x_2^{p-1}x_1^{2-p}\ \Big|
\ \Xi\cii{\mathrm {ch}+}\ \Big|
\\
&\Big|\ x_3=\frac{\Delta_-(x_1+\Delta_+)^p+\Delta_+(x_1-\Delta_-)^p}{2\eps}\ 
\Big|\ \Xi\cii{\mathrm L+}\Big|\ x_3=\Am{p}(x_1,x_2)\,;
\end{aligned}
}

\eqlb{201206}{
\begin{aligned}
(x_1,x_2)\in\omega_4\colon\quad x_3=&\Ak{p}(x_1,x_2)\ \Big|\ \Xi\cii{\mathrm R+}\Big|
\ x_3=\frac{\Delta_-(x_1-\Delta_+)^p+\Delta_+(x_1+\Delta_-)^p}{2\eps}\ \Big|
\ \Xi\cii{\mathrm {ch}+}\Big|
\\
&\Big|\ x_3=\frac{\Delta_-(x_1+\Delta_+)^p+\Delta_+(x_1-\Delta_-)^p}{2\eps}\ 
\Big|\ \Xi\cii{\mathrm L+}\Big|\ x_3=\Am{p}(x_1,x_2)\,.
\end{aligned}
}

\section{Properties of the Bellman candidate $B_2$}
\label{Bellman_candidate_proofs}

In this section we formulate local concavity/convexity property of our candidate $B_2$ defined 
in Subsection~\ref{B2_def}.
Our aim is to prove the following result.
\begin{Th} 
\label{080510}
Let $B_2$ be the function defined in Subsection~\textup{\ref{B2_def}}. It is $C^1$-smooth and locally concave 
if \hbox{$(r-2)(r-p)<0$} and locally convex if $(r-2)(r-p)>0$.
\end{Th}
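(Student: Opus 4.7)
The plan is to verify the claimed concavity/convexity separately on each piece of the foliation built in Section~\ref{SectFoliation}, and then to check $C^1$-matching at every internal interface listed in \eqref{201202}--\eqref{201206}. On every two-dimensional linearity leaf ($T$, $\tilde T$, $R(v)$, the triangular leaves of $F(0)$) the function $B_2$ is linear by construction, so concavity/convexity is a one-dimensional statement in the transverse direction. On every one-dimensional leaf (extremals of $\FL(u)$, $\FR(u)$, chords in $\Xi\cii{\mathrm{ch}\pm}$) $B_2$ is again linear; combined with the way the slopes $\KL$, $\KR$ were chosen via the ODEs \eqref{eq9}, \eqref{eq2508202} and the boundary values at the endpoints, the full gradient of $B_2$ is constant along each such leaf, so the leaf direction is a null direction of the Hessian. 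Since the leaf direction always has nonzero $x_1$-component, the $3\times 3$ Hessian is determined by its $2\times 2$ restriction to $(x_2, x_3)$, and concavity/convexity is equivalent to negative/positive semidefiniteness of this $2\times 2$ matrix.

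For $\Xi\cii{\mathrm L+}$ I would start from the ready expressions \eqref{050801} and \eqref{eq15}: since $v<x_1<u$ on extremals of $\FL(u)$, the sign of $B_{x_3 x_3}$ equals the sign of $\PsiL''$, while the sign of the $2\times 2$ determinant equals the sign of $-\PsiL''(e^{u/\eps}\PsiL-(h+\eps^3 m_p'')\PsiL'+\eps^2 m_r'')$. From $\PsiL(\wL(\xi;p))=\wL(\xi;r)/\eps$ (see \eqref{190904}) together with $h=e^{u/\eps}\wL(\xi;p,\eps)$, one can express $\PsiL'$ and $\PsiL''$ as rational combinations of $\wL'(\xi;p)$, $\wL'(\xi;r)$ and $\wL''(\xi;p)$, $\wL''(\xi;r)$. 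The required signs then follow from the integral representation \eqref{240403} of the auxiliary function $\AAA$ from \eqref{051003}, combined with \eqref{mpp+} for $m_p''$ and $m_r''$. The argument for $\Xi\cii{\mathrm R+}$ is parallel via \eqref{220906}, \eqref{290902}, \eqref{220905}, and \eqref{ww_dif}, with the extra care that \eqref{eq060901} changes definition at $\xi=\eps$, so the subcases $u\leq\eps$ and $u\geq\eps$ need to be handled separately.

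On the domain $R$ the formula \eqref{191001} writes $B_2=x_3^{(r-2)/(p-2)}x_2^{(p-r)/(p-2)}$ as a Cobb--Douglas product in $(x_2,x_3)$; its $(x_2,x_3)$-Hessian has determinant zero and trace of sign $\sign((r-2)(r-p))$, giving concavity when $(r-2)(r-p)<0$ and convexity when $(r-2)(r-p)>0$. On $F(0)$ the formula \eqref{071011} realizes $B_2$ as a multiple of $x_2$ composed with $\PsiL$ evaluated at an affine function of $x_3/x_2$, so the transverse second derivative reduces to $\sign(\PsiL'')$ already handled for $\Xi\cii{\mathrm L+}$. On $\Xi\cii{\mathrm{ch}+}$ I would use $B_2=w(a,b;r)$ with $x_3=w(a,b;p)$ in the notation of \eqref{230601}; formula \eqref{261211} then yields $(B_2)_{x_3}=\AAA(\tfrac{a+b}{2},\tfrac{b-a}{2},r)/\AAA(\tfrac{a+b}{2},\tfrac{b-a}{2},p)$, and a completely analogous computation produces the full $2\times 2$ Hessian in $(x_2,x_3)$, whose signs again follow from the integral identity \eqref{240403}.

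The $C^1$-matching at the interfaces of \eqref{201202}--\eqref{201206} is largely built into the construction: the ODEs \eqref{eq9}, \eqref{eq2508202} force $\partial_{x_2}B_2$ to be constant along each extremal; the boundary conditions $\KL(u,0)=m_r(v)$ and $\KR(u,0)=k_r(v)$ together with \eqref{KLH_and_HLH}, \eqref{220902}, \eqref{220903} match the values and slopes at both ends of every fan with the neighbouring pieces ($\Am{p}$, $\Ak{p}$, the ridge $\HL$, the interior of $R$, the cap of $F(0)$); and the formulas on $R$ and $F(0)$ were themselves obtained by prolongation along the corresponding boundary fans. What remains is to check gradient continuity across each horizontal interface, which is a short computation using \eqref{eq12}, \eqref{050801}, and their $\PsiR$-analogues. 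I expect the main technical obstacle to be the sign analysis of $\PsiL''$, $\PsiR''$ and of the bracketed factors in \eqref{eq15} and \eqref{290902}, since it is at this step that the dichotomy $(r-2)(r-p)\gtrless 0$ enters and where the integral representation \eqref{240403} of $\AAA$ does the real work.
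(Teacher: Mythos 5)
Your outline correctly captures the paper's strategy: reduce local concavity/convexity to the sign of $B_{x_3x_3}$ (as in Lemma~\ref{200401}) on the two-dimensionally foliated pieces $R$ and $F(0)$, and to the additional nonnegativity of the $(x_2,x_3)$-minor (via \eqref{eq15}, \eqref{290902}, and the $\Xi\cii{\mathrm{ch}\pm}$ analogue) on the one-dimensionally foliated pieces, with every sign determination ultimately resting on the integral representation \eqref{240403} of $\AAA$ and the split at $\xi=\eps$ in \eqref{eq060901}, supplemented by a direct $C^1$-matching check at each interface of \eqref{201202}--\eqref{201206}. One small correction: $T$ and $\tilde T$ are leaves of the foliation for $B_1$, not $B_2$, so the two-dimensional leaves relevant to this theorem are only $R(v)$ and the triangles $F_0(h)$ inside $F(0)$.
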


The required calculations are rather long. By this reason we decided to move the proof of the theorem to Appendix. The $C^1$-smoothness of $B_2$ is proved in Appendix~\ref{Ap1}, and the concavity/convexity of $B_2$ is proved in Appendix~\ref{Ap2}. 

\section{Optimizers}\label{SecOptim}
In this section we provide optimizers for $B_2$. Namely, for any point $x \in \Omega_\eps^3$ we find a function $\vf_x$ (called optimizer for $B_2$ at $x$) that satisfies~\eqref{eqphi_x}. The construction of optimizers depends on the foliation, therefore, we consequently investigate different subdomains of $\Omega_\eps^3$.

\subsection{Optimizers for  points in $\Xi\cii{\mathrm L+}$}
We start with integration by parts and use the change of variable to prove the following identity:
$$
\eps m_s(v) \buildrel\eqref{mp+}\over =
s\! \int\limits_{v}^{+\infty}\!\! e^{\frac{v-t}\eps} t^{s-1} dt=
- v^s + e^{\frac{v}{\eps}}\!\!\!\int\limits_0^{\ e^{-\frac{v}{\eps}}}\! (-\eps\ln\tau)^s d \tau\,,
$$
and use this identity to rewrite the function~$\wL$ from~\eqref{w_L_def}:
$$
\wL(\xi;s,\eps) = e^{-\frac\xi\eps}\cdot\frac{(\xi+\eps)^s+(\xi-\eps)^s}{2}- e^{-1}\!\!\!\!\!\int\limits_0^{e^{-\frac{\xi-\eps}{\eps}}}\!\!\! (-\eps\ln\tau)^s d \tau\,,
\qquad \xi\ge\eps\,.
$$

Now, using $u=v+\eps$, the function $w_2$ defined in~\eqref{141002} can be rewritten in the following form: 
\begin{align*}
w_2(\xi;s,\eps,x_1,x_2) 
&= v^s + \frac{\Delta_-}{\eps}\Big[\eps m_s(v) + e^{\frac{u}{\eps}} \wL(\xi;s,\eps)\Big]
\\
&=\frac{\eps - \Delta_-}{\eps} v^s + \frac{\Delta_-}{\eps}\bigg[e^{\frac{u-\xi}{\eps}}\cdot\frac{(\xi+\eps)^s+(\xi-\eps)^s}{2} 
+e^{\frac{v}{\eps}}\!\!\!\!\! \int\limits_{e^{-\frac{\xi-\eps}{\eps}}}^{\  e^{-\frac{v}{\eps}}}\!\!\! (-\eps \ln \tau)^s d\tau\bigg]
\\
&=\frac{1}{l} \int\limits_{0\;}^{\;l}\vf_x(\tau)^s d\tau\,,
\end{align*}
where $l = \frac{\eps}{\Delta_-}e^{\frac{v}{\eps}}$ and 
$$
\vf_x(\tau) = 
\begin{cases}
\xi+\eps,  &0\leq \tau < \tau_1 \df \frac{1}{2}e^{-\frac{\xi-\eps}{\eps}},
\\
\xi-\eps, &\tau_1\leq \tau<  \tau_2\df e^{-\frac{\xi-\eps}{\eps}},
\\
-\eps \ln \tau, & \tau_2\leq \tau < \tau_3\df e^{-\frac{v}{\eps}},
\\
v, & \tau_3\leq \tau\leq \frac{\eps}{\Delta_-}e^{-\frac{v}{\eps}} = l\,.
\end{cases}
$$

Take $s = 1$. Then, we trivially have $m_1(v) =1$, $\wL(\xi;1,\eps) =0$, and 
$$
\av{\vf_x}{[0,l]} = w_2(\xi;1,\eps,x_1,x_2) = v+\Delta_- = x_1\,.
$$

Take $s=2$. We have $m_2(v) =2(v+\eps)$, $\wL(\xi;2,\eps) =0$, and 
$$
\av{\vf_x^2}{[0,l]} = w_2(\xi;2,\eps,x_1,x_2) = v^2+2(v+\eps)\Delta_- = x_2\,.
$$

According to~\eqref{141001} and~\eqref{eq080505} we have 
$$
\av{\vf_x^p}{[0,l]}  = x_3, \qquad \av{\vf_x^r}{[0,l]}  = B_2(x_1,x_2,x_3)\,. 
$$

Let us verify that the $\BMO$-norm of $\vf_x$ does not exceed $\eps$. To do that we use the techniques of the so-called delivery curves, see Chapter~5 in~\cite{ISVZ}. Consider the two-dimensional curve $\gamma$:
\eqlb{eqdelcurve}{
\gamma(\tau) = \Big(\av{\vf_x}{[0,\tau]},\av{\vf_x^2}{[0,\tau]}\Big), \qquad 0 <\tau<l\,. 
}
This curve is called the delivery curve generated by $\vf_x$. We see that this curve starts at the point $\big(\xi+\eps, (\xi+\eps)^2\big)$ 
on the lower boundary of $\Omega_\eps^2$ (when $\tau \in (0,\tau_1)$), 
then goes along the tangent line~$S_-(\xi+\eps)$ and arrives at 
$\big(\xi, \xi^2+\eps^2\big)$ when $\tau=\tau_2$ (see Fig.~\ref{fig161101} for the notation $S_\pm$). The point $\gamma(\tau)$
goes along the upper boundary of $\Omega_\eps^2$ when $\tau \in (\tau_2,\tau_3)$ 
and arrives at $\big(v+\eps, (v+\eps)^2+\eps^2\big)$ when $\tau = \tau_3$.
Then it goes along the tangent line $S_+(v)$ till the point $(x_1,x_2)$. 
We see that this curve is a graph of a convex function and that any 
tangent line to this curve does not cross the upper boundary of $\Omega_\eps^2$, only touches it. Therefore, by Corollary~5.1.6
from~\cite{ISVZ} we have  
$$
\|\vf_x\|_{\BMO} \leq \eps\,.
$$

We summarize: for any $x = (x_1,x_2,x_3) \in \Xi\cii{\mathrm L+}$ we have constructed the desired optimizer $\vf_x$ for $B_2$ at $x$ (see~\eqref{eqphi_x}). 

\subsection{Optimizers for points in $\Xi\cii{\mathrm R+}$}
\label{OptXiR}

We again integrate by parts and use the change of variable to prove the following identity:
\eqlb{eq270801}{
\eps k_s(v) \buildrel{\eqref{mp-}}\over= 
s\int\limits_\eps^u e^{(t-u)/\eps} t^{s-1} dt =
v^s -\eps^s e^{\frac{\eps-v}{\eps}} - 
e^{-\frac{v}{\eps}}\!\!\!\int\limits_e^{\ e^{\frac{v}{\eps}}}\!(\eps\ln\tau)^s d\tau,
}
and use this relation to rewrite the function~$\wR$ from~\eqref{220904}. Consider first the case $\xi \ge \eps$. We obtain
\begin{align*}
\wR(\xi;s,\eps)&= e^{\frac\xi\eps}\big[\half\big((\xi-\eps)^s-(\xi+\eps)^s\big)+\eps k_s(\xi+\eps)\big]
\\
&=e^{\frac\xi\eps}\cdot \frac{(\xi-\eps)^s+(\xi+\eps)^s}{2} - \eps^s - e^{-1}\!\!\!\!\int\limits_e^{\ e^{\frac{\xi+\eps}{\eps}}}\!\! (\eps\ln\tau)^s d \tau.
\end{align*}

By the two preceding formulas, we may represent the function~$w_2$ defined in~\eqref{141004} in the following form:
\begin{align*}
w_2(\xi;s,\eps,x_1,x_2) 
&=  v^s - \frac{\Delta_-}{\eps}\Big[\eps k_s(v) - e^{-\frac{u}{\eps}} \wR(\xi;s,\eps)\Big]
\\
&=\frac{\eps - \Delta_-}{\eps} v^s + \frac{\Delta_-}{\eps}\bigg[e^{\frac{\xi-u}{\eps}}\cdot\frac{(\xi+\eps)^s+(\xi-\eps)^s}{2}
+e^{-\frac{v}{\eps}}\!\!\!\! \int\limits_{\ e^{\frac{\xi+\eps}{\eps}}}^{e^{\frac{v}{\eps}}}\!\! (\eps \ln \tau)^s d\tau\bigg]
\\
&=\frac{1}{l} \int\limits_{0\;}^{\;l}\vf_x(\tau)^s d\tau\,,
\end{align*}
where $l = \frac{\eps}{\Delta_-}e^{\frac{v}{\eps}}$ and 
$$
\vf_x(\tau) = 
\begin{cases}
\xi-\eps,  &0\leq \tau < \tau_1 = \frac{1}{2}e^{\frac{\xi+\eps}{\eps}},
\\
\xi+\eps, &\tau_1\leq \tau< \tau_2 =  e^{\frac{\xi+\eps}{\eps}},
\\
\eps \ln \tau, & \tau_2\leq \tau < \tau_3 =  e^{\frac{v}{\eps}},
\\
v, & \tau_3\leq \tau\leq \frac{\eps}{\Delta_-}e^{\frac{v}{\eps}} = l\,.
\end{cases}
$$
Let us verify that the function $\vf_x$ is an optimizer for $B_2$ at $x$.
For $s=1$ we have 
$$
k_1(v) = 1 - e^{1-\frac{v}{\eps}}; \qquad \wR(\xi;1,\eps) =e^{\frac\xi\eps} \bigl(-\eps + \eps (1 - e^{1-\frac{\xi+\eps}{\eps}})\bigr) = -\eps\,.
$$
Therefore,
$$
\av{\vf_x}{[0,l]} = w_2(\xi;1,\eps,x_1,x_2) = v-\Delta_- (1 -  e^{1-\frac{v}{\eps}} +  e^{-\frac{u}{\eps}}) = v-\Delta_- = x_1\,.
$$
For $s=2$ we have 
$$
k_2(v) = 2(v-\eps); \qquad \wR(\xi;2,\eps)  =e^{\frac\xi\eps} \big(-2\xi\eps + \eps k_2(\xi+\eps)\big) = 0\,, 
$$
therefore
$$
\av{\vf_x^2}{[0,l]} = w_2(\xi;2,\eps,x_1,x_2) = v^2-2(v-\eps)\Delta_-  = x_2\,.
$$
Applying~\eqref{141003} and~\eqref{eq080507}, we obtain
\eqlb{eq270802}{
\av{\vf_x^p}{[0,l]}  = w_2(\xi;p,\eps,x_1,x_2)  = x_3, \qquad \av{\vf_x^r}{[0,l]}  = w_2(\xi;r,\eps,x_1,x_2) = B_2(x_1,x_2,x_3)\,.
}
Thus, to prove that $\vf_x$ is an optimizer for $B_2$ at $x$ it suffices to verify   
$
\|\vf_x\|_{\BMO} \leq \eps\,.
$
We will again argue using delivery curves. Consider the two-dimensional curve $\gamma$ defined by~\eqref{eqdelcurve}. In this case it starts at the point $\big(\xi-\eps, (\xi-\eps)^2\big)$ on the lower boundary of $\Omega_\eps^2$ (when $\tau \in (0,\tau_1)$), then goes along the tangent line~$S_+(\xi-\eps)$ and arrives at $\big(\xi, \xi^2+\eps^2\big)$ when $\tau=\tau_2$. The point $\gamma(\tau)$ goes along the upper boundary of~$\Omega_\eps^2$ when $\tau \in (\tau_2,\tau_3)$ and arrives at $\big(v-\eps, (v-\eps)^2+\eps^2\big)$ when $\tau = \tau_3$. Then it follows the tangent line~$S_-(v)$ till the point $(x_1,x_2)$. We see that this curve is a graph of a convex function and that any tangent line to this curve does not cross the upper boundary of $\Omega_\eps^2$ transversally. Therefore, by Corollary~5.1.6 from~\cite{ISVZ} we have  
$$
\|\vf_x\|_{\BMO} \leq \eps\,.
$$

We now turn to the case $0 \le \xi \le \eps$. By~\eqref{eq270801} we have
\begin{align*}
\wR(\xi;s,\eps)& \buildrel{\eqref{220904}}\over{=} e^{\frac\xi\eps}\big(\eps k_s(\xi+\eps) -2\xi \eps (\xi+\eps)^{s-2}\big)
\\
&=e^{\frac\xi\eps} \big((\xi+\eps)^s -2\xi \eps (\xi+\eps)^{s-2}\big) - \eps^s - e^{-1}\!\!\!\!\int\limits_{e\;}^{\ e^{\frac{\xi+\eps}{\eps}}}\!\! (\eps\ln\tau)^s d \tau\,.
\end{align*}

Thus, the function~$w_2$ defined in~\eqref{141004} has the following representation (recall that here $u = v-\eps$):
\begin{align*}
w_2(\xi;s,\eps,x_1,x_2) 
&= v^s - \frac{\Delta_-}{\eps}\Big[\eps k_s(v) - e^{-\frac{u}{\eps}} \wR(\xi;s,\eps)\Big]
\\
&\buildrel{\eqref{eq270801}}\over=\frac{\eps - \Delta_-}{\eps} v^s + \frac{\Delta_-}{\eps}\bigg[e^{\frac{\xi-u}{\eps}}\big((\xi+\eps)^s - 2 \xi \eps (\xi+\eps)^{s-2}\big)
+e^{-\frac{v}{\eps}}\!\!\!\! \int\limits_{e^{\frac{\xi+\eps}{\eps}}}^{\  e^{\frac{v}{\eps}}}\!\! (\eps \ln \tau)^s d\tau\bigg]
\\
&=\frac{1}{l} \int\limits_{0\;}^{\;l} |\vf_x(\tau)|^s d\tau\,,
\end{align*}
where $l = \frac{\eps}{\Delta_-}e^{\frac{v}{\eps}}$ and 
\eqlb{eq161001}{
\vf_x(\tau) = 
\begin{cases}
 - (\xi+\eps),  &0\leq \tau <\tau_1\df \alpha_- e^{\frac{\xi+\eps}{\eps}},
 \\
0, & \tau_1\leq \tau <\tau_2\df (1-\alpha_+)e^{\frac{\xi+\eps}{\eps}},
\\
\xi+\eps,  &\tau_2\leq \tau<\tau_3\df e^{\frac{\xi+\eps}{\eps}},
\\
\eps \ln \tau, & \tau_3\leq \tau <\tau_4\df e^{\frac{v}{\eps}},
\\
v, & \tau_4\leq \tau\leq \frac{\eps}{\Delta_-}e^{\frac{v}{\eps}} = l\,,
\end{cases}
}
here $\alpha_-$ and $\alpha_+$ are any non-negative numbers that satisfy $\alpha_-+\alpha_+ = \frac{\xi^2+\eps^2}{(\xi+\eps)^2}$. As before, one can easily verify 
that for $s=2$ we have $\wR(\xi;2,\eps) = 0$ and
$$
\av{\vf_x^2}{[0,l]} = w_2(\xi;2,\eps,x_1,x_2) = v^2-2(v-\eps)\Delta_-  = x_2\,.
$$
We also have~\eqref{eq270802}. The function $\vf_x$ is no longer non-negative, therefore, $\av{\vf_x}{[0,l]} \ne \av{|\vf_x|}{[0,l]}$, thus we cannot argue as before to calculate the average $\av{\vf_x}{[0,l]}$. On the other hand, we can choose $\alpha_-$ and $\alpha_+$ in such a way that $\av{\vf_x}{[0,l]} = x_1$. Let us verify that we may take
$$
\alpha_- = \frac{\eps^2-\eps\xi}{2(\xi+\eps)^2},\qquad \alpha_+ = \frac{\eps^2+\eps\xi+2\xi^2}{2(\xi+\eps)^2}\,.
$$
Indeed,
\begin{align*}
\int\limits_{0\;}^{\;l} \vf(\tau)\,d\tau &= (\alpha_+ - \alpha_-)(\xi+\eps) e^{\frac{\xi+\eps}{\eps}} + 
\eps\!\!\! \int\limits_{e^{\frac{\xi+\eps}{\eps}}}^{\ e^{\frac{v}{\eps}}}\!\! \ln \tau\,d \tau +
ve^{\frac{v}{\eps}} \big(\frac{\eps}{\Delta_-} -1\big)
\\
&=\xi e^{\frac{\xi+\eps}{\eps}} +\eps \Big(\frac{v}{\eps} e^{\frac{v}{\eps}} - e^{\frac{v}{\eps}} -  \frac{\xi+\eps}{\eps} e^{\frac{\xi+\eps}{\eps}} +  e^{\frac{\xi+\eps}{\eps}}\Big) +ve^{\frac{v}{\eps}} \big(\frac{\eps}{\Delta_-} -1\big)
\\
&=e^{\frac{v}{\eps}} \big(v-\eps +\frac{v\eps}{\Delta_-}-v \big) = e^{\frac{v}{\eps}} \frac{\eps}{\Delta_-} (v-\Delta_-)= e^{\frac{v}{\eps}} \frac{\eps}{\Delta_-} x_1 = l x_1\,.
\end{align*}

It remains to prove that 
$
\|\vf_x\|_{\BMO} \leq \eps.
$
Consider the two-dimensional curve $\gamma$ defined by~\eqref{eqdelcurve}. In this case it starts at the point $\big(-(\xi+\eps), (\xi+\eps)^2\big)$ on the lower boundary of $\Omega_\eps^2$ (when $\tau \in (0,\tau_1)$), then goes in the direction of origin and arrives at 
$$
\gamma(\tau_2)  = \frac{\alpha_-}{1-\alpha_+}\big(-(\xi+\eps), (\xi+\eps)^2\big) = \Big(-\frac{\eps^2-\xi^2}{\eps+3\xi}, \frac{(\eps-\xi)(\eps+\xi)^2}{\eps+3\xi} \Big).
$$
After that the point $\gamma(\tau)$ goes along the line segment in the direction of the point $(\xi+\eps,(\xi+\eps)^2)$ and arrives at $\gamma(\tau_3)=(\xi,\xi^2+\eps^2)$. Then it follows the upper boundary of $\Omega_\eps^2$ when $\tau \in (\tau_3,\tau_4)$ and arrives at $\big(v-\eps, (v-\eps)^2+\eps^2\big)$ when $\tau = \tau_4$. Finally, for $\tau \in [\tau_4,l]$ the point $\gamma(\tau)$ goes along the tangent line~$S_-(v)$ till the point $(x_1,x_2)$. Figure~\ref{figDelCurve} illustrates the curve $\gamma$. 
\begin{figure}[h]
    \centering
    \includegraphics[scale = 0.3]{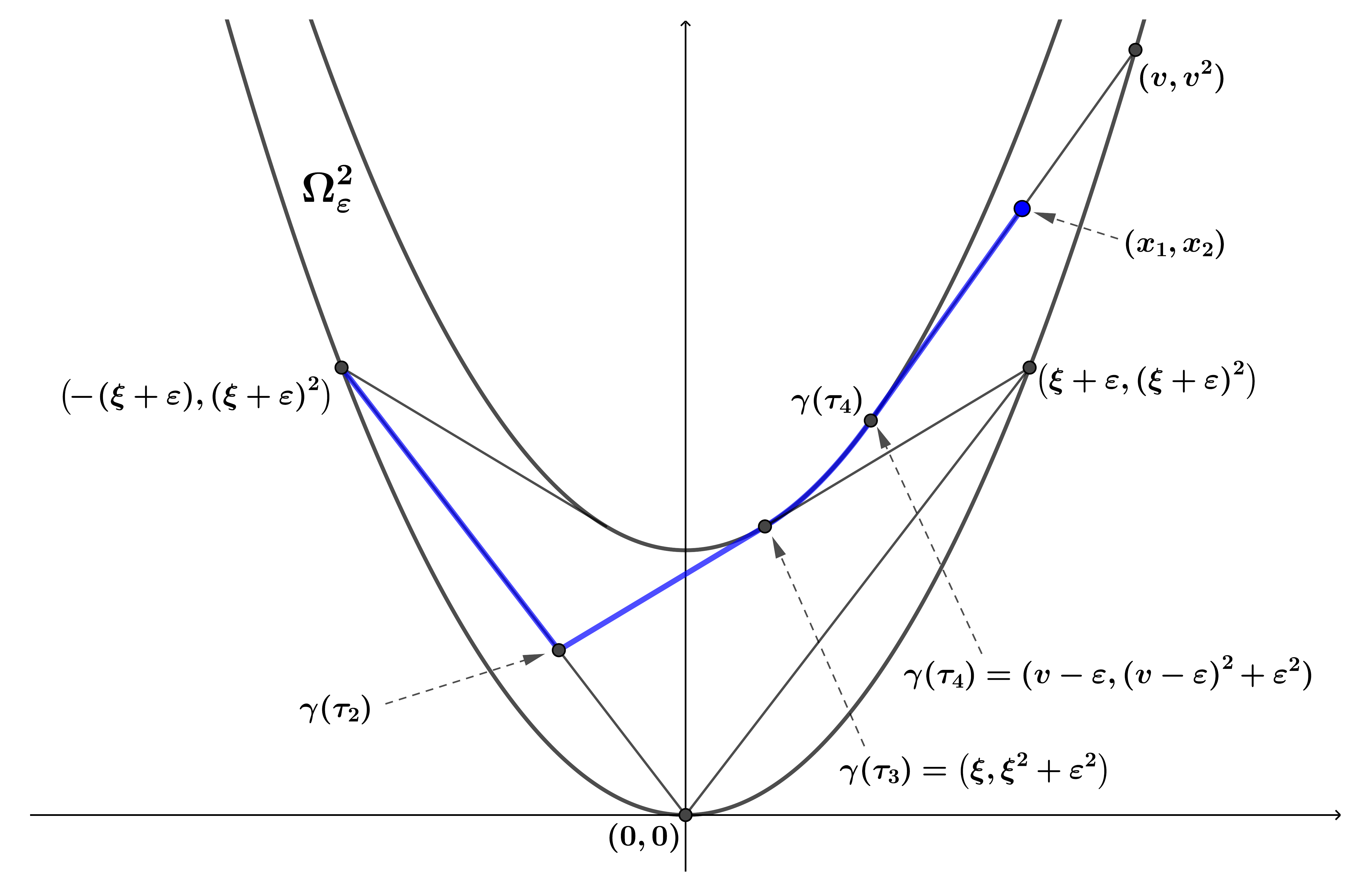}
    \caption{Graph of a delivery curve}
    \label{figDelCurve}
\end{figure}
This curve is a graph of a convex function and any tangent line to this curve does not cross the upper boundary of $\Omega_\eps^2$ transversally. Therefore, by Corollary~5.1.6 from~\cite{ISVZ} we obtain  
$$
\|\vf_x\|_{\BMO} \leq \eps.
$$

\subsection{Description of optimizers for points in other domains}
In this subsection we propose the optimizers at the points of the domains $R$, $F(0)$, and $\Xi\cii{\mathrm{ch}+}$. We omit some details of the proof because  we do not need these optimizers to prove that $B_2$ coincides with either~$\Bell_{p,r;\eps}^+$ or $\Bell_{p,r;\eps}^-$ (depending on $p$ and $r$).

For any $x \in \Xi\cii{\mathrm{ch}+}$ there is a  chord of the form $[U(a), U(b)]$, $0 \leq a \leq b \leq a +2 \eps$, that passes through $x$ such that the function $B_2$ is linear on this chord. The function
$$
\vf_x(\tau) =
\begin{cases}
b, & a \leq \tau < x_1,\\
a, & x_1 \leq \tau \leq b,
\end{cases}
$$
defined on an interval $[a, b]$ is an optimizer for $B_2$ at $x$. Moreover, any function with the same distribution function will also be an optimizer at $x$.

For any $x \in R$ there is $v \in [0,2\eps]$ such that $x \in R(v)$, which is the induced convex hull of three points on the skeleton: $U(v), U(-v),$ and $U(0)=0$, see Subsection~\ref{FolNearZero}. The function $B_2$ is linear on $R(v)$, therefore, it is natural to find an optimizer that attains only three values: $-v, 0,$ and $v$. 
As $x \in R(v)$ lies inside the triangle with the vertices $U(v), U(-v),$ and $U(0)=0$, there are coefficients $\alpha_-, \alpha_0, \alpha_+ \in [0,1]$ such that
$$
x = \alpha_- U(-v)  + \alpha_0 U(0)+ \alpha_+ U(v), \qquad \alpha_- + \alpha_0 + \alpha_+ = 1,
$$ namely, they are
$$
\alpha_- = \frac{x_2 - vx_1}{2v^2}, \qquad \alpha_+ =  \frac{x_2 + vx_1}{2v^2}, \qquad \alpha_0 = \frac{v^2-x_2}{v^2}.
$$
Define the function
$$
\vf_x(\tau) =
\begin{cases}
-v, & 0 \leq \tau < \alpha_-,\\
0, &  \alpha_- \leq \tau < \alpha_-+\alpha_0 = 1 - \alpha_+,\\
v, & 1 - \alpha_+ \leq \tau \leq 1.
\end{cases}
$$
It is obvious that $\av{\vf_x}{[0,1]} = x_1$,  $\av{\vf_x^2}{[0,1]} = x_2$, and $\av{|\vf_x|^p}{[0,1]}=x_3$. The identity $\av{|\vf_x|^r}{[0,1]}=B_2(x)$ follows from the linearity of $B_2$ on  $R(v)$. It is easy  to verify that $\vf_x$ has $\BMO$-norm not greater than $\eps$, therefore, the function  $\vf_x$ constructed above is an optimizer for $B_2$ at~$x$.


The domain $F(0)$ is foliated by the two-dimensional domains of linearity $F_0(h)$, each such domain is the curvilinear triangle with the vertices 
$U(0) = (0,0,0)$ and $F_\pm(h) = (\pm\eps,2\eps^2,\eps m_p(0)+h)$. The last two vertices, $F_\pm(h)$, lie on the boundary of the domains $\Xi\cii{\mathrm L+}$ and $\Xi\cii{\mathrm R-}$, 
we already know the optimizers for $B_2$ at these points. The optimizer at $x$ can be constructed by gluing those two optimizers together with the zero function (optimizer at the origin) in appropriate proportions. Namely, let  $\alpha_0, \alpha_\pm \in [0,1]$ be coefficients such that
$$
x = \alpha_- F_-(h) + \alpha_0 U(0) + \alpha_+ F_+(h), \qquad \alpha_- + \alpha_0 + \alpha_+ = 1.
$$
Let $\vf_\pm$ be optimizers for $B_2$ at the points $F_\pm(h)$ such that $\vf_+$ is non-decreasing and 
non-negative, $\vf_-$ is non-decreasing and non-positive, both defined on $[0,1]$. Then the function
\eqlb{eq280801}{
\vf_x(\tau) =
\begin{cases}
\vf_-(\frac{\tau}{\alpha_-}), & 0 \leq \tau < \alpha_-,\\
0, & \alpha_- \leq \tau < \alpha_- + \alpha_0,\\
\vf_{+}(\frac{\tau - \alpha_- - \alpha_0}{\alpha_+}), & \alpha_- + \alpha_0 \leq \tau \leq 1
\end{cases}
}
is an optimizer for $B_2$ at $x$. The proof of the fact that $\BMO$-norm of $\vf_x$ does not exceed $\eps$ is more cumbersome in this case. The arguments repeat the proof of~Proposition~5.2.9 in~\cite{ISVZ}.

\section{Proof of Theorem~\ref{BellmanTheorem}}\label{sec_proofthm21}
To prove Theorem~\ref{BellmanTheorem}, we only need to deal with the function $B_2$, because the same theorem concerning~$B_1$ was proved in~\cite{SVZ}. 

Consider the case $(r-2)(r-p)<0$. By Proposition~\ref{BasicProperties}, the function $\Bell_{p,r;\eps}^+$ is the pointwise minimal locally concave function on~$\Omega_\eps^3$ satisfying boundary condition~\eqref{eqBCsceleton} on the skeleton, whereas, by Theorem~\ref{080510}, $B_2$ is locally concave on $\Omega_\eps^3$. Thus, 
$$
B_2 \geq \Bell_{p,r;\eps}^+ \qquad \text{on}~\Omega_\eps^3.
$$ 
Let us now prove the reverse inequality. The whole domain~$\Omega_\eps^3$ is the union of $\Xi\cii{\mathrm R \pm}$, $\Xi\cii{\mathrm L \pm}$, $\Xi\cii{\mathrm{ch}\pm}$, $F(0)$, and~$R$. In Section~\ref{SecOptim}  we have constructed the optimizer $\vf_x$ for each point $x \in \Xi\cii{\mathrm R \pm} \cup \Xi\cii{\mathrm L \pm}\cup \Xi\cii{\mathrm{ch}\pm}$, therefore, by the definition of $\Bell_{p,r;\eps}^+$, we have
$$
\Bell_{p,r;\eps}^+(x) \geq \av{\vf_x}{} = B_2(x).
$$

The domain $R$ is foliated by $R(v)$, $v \in (0,2\eps)$. Each leaf $R(v)$ is a curvilinear triangle, its vertices lie on the skeleton, therefore, $B_2$ and $\Bell_{p,r;\eps}^+$ coincide at the vertices. The function $B_2$ is linear on $R(v)$  whereas $\Bell_{p,r;\eps}^+$ is locally concave there, thus $\Bell_{p,r;\eps}^+ \geq B_2$ on $R(v)$.

The same argument works for the domain $F(0)$. It is foliated by $F_0(h)$. Each $F_0(h)$ is a curvilinear triangle. Two of its vertices lie in $\Xi\cii{\mathrm R -} \cup \Xi\cii{\mathrm L +}$, and the third one is the origin. Therefore, $B_2$ and $\Bell_{p,r;\eps}^+$ coincides at the vertices. The function $B_2$ is linear on $F_0(h)$  whereas $\Bell_{p,r;\eps}^+$ is locally concave there, thus $\Bell_{p,r;\eps}^+ \geq B_2$ on $F_0(h)$.

The case $(r-2)(r-p)>0$ is completely  symmetric, all the inequalities change to the opposite, and concavity changes to convexity. Theorem~\ref{BellmanTheorem} is proved.

\section{Computation of the constant}
\label{const}

Recall that we investigate the optimal constant $C = C(p,r)$ in the inequality
\eqlb{081001}{
\|\vf\|_{L^r(I)}^{\phantom{\frac{p}{r}}}\leq
C(p,r)\|\vf\|_{L^p(I)}^{\frac{p}{r}}\|\vf\|_{\BMO(I)}^{1-\frac{p}{r}},
\qquad\vf\in\BMO,\quad\av{\vf}{I}=0.
}
In this section, we do not provide the explicit formula for the constant $C(r,p)$. However, we show that for given values $r$ and $p$ it can be computed as a unique maximum point of a known function of a single variable. 

Without loss of generality we may assume that $\|\vf\|_{\BMO} = 1$, i.\,e., set $\eps = 1$ throughout this section, and rewrite the inequality above in the form
\eqlb{290701}{
\int\limits_{I}|\varphi|^r \le C^r(p,r) \int\limits_{I}|\varphi|^p.
}

From the definition of the Bellman function (see \eqref{041102}), we get
\eqlb{240801}{
 C^r(p,r) = \sup_{x_2, x_3} \frac{\Bell_{p,r;1}^+(0,x_2,x_3)}{x_3}.
}

In this section we assume that $p > 1$. We postpone the investigation of the case $p=1$ till Section~\ref{p=1}. Till the end of this section we skip indexes $p,r,1$ and symbol $+$ in the notation $\Bell^+_{p,r;1}$, i.\,e., we set $\Bell = \Bell^+_{p,r;1}$.
We have to consider the points $x$ from the domains $F(0)$ and $R(v)$.
We will show that supremum in~\eqref{240801} is attained at the points $(0,x_2,x_3) \in F(0)$.

First, we consider $x \in R$ and show that the maximum of $\Bell / x_3$ is attained at the boundary with $F(0)$, i.\,e, at $x_3 = 2^{p-2} x_2$, see~\eqref{111201}. Indeed, considering  $x_3 \ge 2^{p-2} x_2$, from \eqref{191001} we have that $$ \frac{\Bell}{x_3} = x_3^{\frac{r-2}{p-2} - 1}x_2^{\frac{p-r}{p-2}} =
\left( \frac{x_2}{x_3} \right)^{\frac{r-p}{2-p}} \!\! \le \; 2^{r-p}.$$

Second, we consider $x \in F(0)$.
From~\eqref{220601} and~\eqref{220602} we see that the ratio $\Bell / x_3$ does not depend on the variable $x_2$ and therefore, we may set $x_2 = 1$. 
Recall that the function $\Bell(0,1,\,\cdot\,)$ is concave on the segment 
$$
J \buildrel{\eqref{201202}}\over= [2^{p-2}, \Am{p}(0, 1)]\buildrel{\eqref{eqAmang}}\over=\left[2^{p-2}, \half\Gamma(p+1) \right].
$$ 
Consider the value
\eqlb{150901}{
x_3^2 \left( \frac{\Bell}{x_3}\right)_{x_3} = x_3 \Bell_{x_3} - \Bell.
}
Clearly, this function is decreasing, since its derivative is equal to $x_3 \Bell_{x_3 x_3} < 0$.
Now, we show that the function from~\eqref{150901} attains the values of different signs at the endpoints of $J$, and thereby, it has the unique zero value inside $J$.

It is convenient to use the parameter $\xi$ instead of parameter $x_3$. Recall that $\xi$ varies from $1$ to $+\infty$, see~\eqref{220601} and~\eqref{220602}.
In the domain $F(0)$, using~\eqref{110702} from Appendix~\ref{Ap2} and~\eqref{240403}, we get
\eqlb{130901}{
\Bell_{x_3} =\frac{r(r-1)(r-2)\int\limits_{-1}^1 (1-\lambda^2)(\lambda + \xi)^{r-3} d\lambda}{p(p-1)(p-2)\int\limits_{-1}^1 (1-\lambda^2)(\lambda + \xi)^{p-3} d\lambda}.
}
Therefore, since $1 < p < r < 2$,
for $\xi = +\infty$, we deduce that $\Bell_{x_3} = +\infty$.
From \eqref{220601} and \eqref{220602}, using $\wL(+\infty)=0$, for $\xi = + \infty$ we obtain
\eqlb{130902}{
\frac{\Bell}{x_3} = \frac{\Gamma(r+1)}{\Gamma(p+1)},
}
whence, we obtain $\Bell/x_3 < \Bell_{x_3}$ at the endpoint of $J$ corresponding to $\xi = + \infty$.

At the other endpoint $\xi = 1$, the required quantities are  computed in Appendix~\ref{Ap1}
$$
\Bell_{x_3} \buildrel\eqref{140902}\over= 2^{r-p} \frac{2-r}{2-p}\, \quad\quad \text{ and } \quad\quad \frac{\Bell}{x_3} \buildrel\eqref{140903}\over= 2^{r-p}.
$$
Therefore, we have the opposite inequality $\Bell_{x_3}  < \frac{\Bell}{x_3} $.

Hence, we finished the proof that supremum of $\Bell / x_3$ is attained at the unique interior point of the interval~$J$, where the derivative of $\Bell / x_3$ with respect to $x_3$ vanishes. 

Thus, the sharp constant $C(p,r)$ can be calculated as follows: $C(p,r)^r = \frac{B^{+}_{p,r;1}(0,1,x_3)}{x_3}$, where
$$x_3 \buildrel{\eqref{220601}}\over= \frac{1}{2}\Big[\Gamma(p+1)+e\wL(\xi;p,1)\Big]$$ 
and $\xi$ is the solution of the equation $x_3 \Bell_{x_3} = \Bell$, which can be rewritten as
\eqlb{200901}
{
\frac{r(r-1)(r-2)\int\limits_{-1}^1 (1-\lambda^2)(\lambda + \xi)^{r-3} d\lambda}{p(p-1)(p-2)\int\limits_{-1}^1 (1-\lambda^2)(\lambda + \xi)^{p-3} d\lambda}
\; = \; \frac{\phantom{\int\limits_{-1}^1}\hskip-10pt\Gamma(r+1)+ e\wL(\xi;r,1)}{\phantom{\int\limits_{-1}^1}\hskip-10pt\Gamma(p+1)+ e\wL(\xi;p,1)}\,.
} 

\section{On the circle and on the line: proofs of Theorems~\ref{Ctheorem} and~\ref{Rtheorem}}
\label{sec_circle_line}
It is easy to prove inequalities~\eqref{MultCircle} and~\eqref{eq141201} having Theorem~\ref{MultiplicativeTheoremInterval} at hand, see Subsection~6.1 of~\cite{SVZ} for details. It is much harder to prove the sharpness of these inequalities. Fortunately, we can directly apply lemmas from Subsection~6.2 of~\cite{SVZ} to our problem. Namely, let $\phi_0$ be an optimizer for $B_2$ at the point $(0,1,x_3)$, where supremum in~\eqref{240801} is attained (this function $\phi_0$ can be found explicitly, see~\eqref{eq280801}). We use Lemma~6.3 of~\cite{SVZ} and for any $\delta>0$ find a 1-periodic function $\psi_0$ on $\mathbb{R}$ (i.\,e., a function on $\mathbb{T}$) such that
\begin{gather*}
\av{\psi_0}{[0,1]} = 0, \qquad \av{\psi_0^2}{[0,1]} = 1,
\\
\av{|\psi_0|^p}{[0,1]} = \av{|\phi_0|^p}{[0,1]}  + O(\delta), \qquad \delta \to 0+,
\\
\av{|\psi_0|^r}{[0,1]} = \av{|\phi_0|^r}{[0,1]}  + O(\delta), \qquad \delta \to 0+,
\\
\|\psi_0\|_{\BMO(\mathbb{T})} \leq 1 + \delta.
\end{gather*}
This means that the constant in~\eqref{MultCircle} is attained on a sequence of functions $\psi_0$ when $\delta$ tends to zero. This proves Theorem~\ref{Ctheorem}.

We use Lemma~6.4 of~\cite{SVZ} to prove the sharpness of~\eqref{eq141201} (and thereby Theorem~\ref{Rtheorem}). For any $\delta>0$ we find a function $\psi$ on $\mathbb{R}$ such that 
\begin{gather*}
\psi = 0 \quad\text{on} \quad \mathbb{R}\setminus[0,1],
\\
\av{|\psi|^p}{[0,1]} = \av{|\psi_0|^p}{[0,1]} = \av{|\phi_0|^p}{[0,1]}  + O(\delta), \qquad \delta \to 0+,
\\
\av{|\psi|^r}{[0,1]} = \av{|\psi_0|^r}{[0,1]} = \av{|\phi_0|^r}{[0,1]}  + O(\delta), \qquad \delta \to 0+,
\\
\|\psi_0\|_{\BMO(\mathbb{R})} \leq 1 + \delta.
\end{gather*}
We conclude that the constant in~\eqref{eq141201} is attained on a sequence of functions $\psi$ when $\delta$ goes to zero. This proves Theorem~\ref{Rtheorem}.

\section{Special case: $p=1$}
\label{p=1}
We will prove that the Bellman function $\Bell_{1,r;\eps}^+$ is the limit of the functions $\Bell_{p,r;\eps}^+$  when $p \to 1+$. The limit function  coincides with $B_2$ described in Subsection~\ref{B2_def}. 

First, we note that the domain $\Omega_\eps^3 = \Omega_{\eps;p}^3$ for $p=1$ is the limit of the corresponding domains in a natural sense: for any point $(x_1,x_2) \in \Omega^2_\eps$ the values $\Bell_{1;\eps}^\pm(x_1,x_2)$ that define the upper and the lower boundaries of $\Omega_{\eps;1}^3$ are the limits of $\Bell_{p;\eps}^\pm(x_1,x_2)$, when $p \to 1+$, see Subsection~\ref{2dimBell}. For $p=1$ we have $m_1(u)=1$ for $u \geq 0$, therefore, the function $\Am{1}(x_1,x_2)$ has a very simple description:
\begin{equation}
    \Am{1}(x_1,x_2)
    =
    \begin{cases}
     |x_1|, &     (x_1,x_2) \in \Omega_\eps^2 \setminus \big(\omega_0 \cup \omega_{\pm 1}\big);\\
     \frac{x_2}{2\eps}, & (x_1,x_2) \in \omega_0 \cup \omega_{\pm 1}.
    \end{cases}
\end{equation}

Second, the domains $\Xi\cii{\mathrm L+}$, $\Xi\cii{\mathrm{ch}+}$, and $F(0)$ do not exist for $p=1$ (see~\eqref{eq080501},~\eqref{eq080503}, and~\eqref{F0}). This happens because all the chords of the form $[U(a), U(b)]$, $0\leq a \leq b \leq a+2\eps$ lie on the lower boundary of~$\Omega_{\eps;1}^3$. We conclude that the whole domain $\Omega_{\eps;1}^3$ is the union of $\Xi\cii{\mathrm R+}$, its symmetric domain $\Xi\cii{\mathrm L-}$, and the domain $R$ foliated by the two-dimensional leaves $R(v)$, $v \in [0,2\eps]$.
We state that the function $B_2$ defined on these domains in Subsection~\ref{B2_def} for $p=1$ coincides with~$\Bell_{1,r;\eps}^+$.

Note that $w_2(\,\cdot \,, p, \eps, x_1, x_2)$ defined in~\eqref{141004} converges to $w_2(\,\cdot \,, 1, \eps, x_1, x_2)$ uniformly in $\xi \in [0, u]$ as~$p \to 1+$ when $\eps$ and $(x_1,x_2)$ are fixed. The limit function $w_2(\,\cdot \,, 1, \eps, x_1, x_2)$ is continuous and strictly increasing. Therefore, for every $x_3 \in \big( w_2(0, 1, \eps, x_1, x_2), w_2(u, 1, \eps, x_1, x_2))$ we have the convergence
$$
w_2\big(w_2^{-1}(x_3; p,\eps, x_1,x_2); r;\eps,x_1,x_2\big) \to w_2\big(w_2^{-1}(x_3; 1,\eps, x_1,x_2); r;\eps,x_1,x_2\big), \qquad p \to 1+.
$$
Thus, in the interior of $\Xi\cii{\mathrm R+}$ we have the pointwise convergence of the functions $B_2$ to the limit function when $p \to 1+$. Convergence on $R$ is obvious. As a result, we get convergence on the interior of $\Omega_{\eps;1}^3$. All these functions $B_2$ are locally concave for $p>1$, therefore, the limit function is also locally concave on the interior of $\Omega_{\eps;1}^3$. Since it is also continuous on $\Omega_{\eps;1}^3$, it is locally concave on the entire $\Omega_{\eps;1}^3$, therefore, due to item~3) of Proposition~\ref{BasicProperties}
$$
B_2(x) \geq \Bell_{1,r;\eps}^+(x), \qquad x \in \Omega_{\eps;1}^3.
$$

For each point $x \in \Xi\cii{\mathrm R+}$ one can construct an optimizer in the same way as it was done in~\eqref{OptXiR}. The only difference is that for $p=1$ the variable $\xi$ runs from $0$ to $\min(u,\eps)$, and by this reason we only use optimizers given by~\eqref{eq161001}. Therefore,
$$
B_2(x) \leq \Bell_{1,r;\eps}^+(x),  \qquad x \in \Xi\cii{\mathrm R+}.
$$
In the domain $R$ we use the same reasoning as before: the function $B_2$ is linear on $R(v)$ and coincides with the locally concave function $\Bell^+_{1,r;\eps}$ at the vertices of $R(v)$, therefore, $B_2 \leq \Bell_{1,r;\eps}^+$ on $R(v)$.

Thus, we have proved that $B_2 = \Bell_{1,r;\eps}^+$ everywhere on $\Omega_{\eps;1}^3$. As a consequence, we have that the statements of Theorems~\ref{MultiplicativeTheoremInterval},~\ref{Ctheorem},~\ref{Rtheorem},~\ref{Cubedtheorem}, and~\ref{DimHeatTh} and Corollaries~\ref{Rdcor},~\ref{DimCor}  hold for $p=1$.

Finally, we compute the sharp constant in the inequality \eqref{FirstMultiplicative} for $p=1$.
Repeating the argument from Section~\ref{const}, we arrive at
\eqlb{151001}{
 C^r(1,r) = \sup_{x_2, x_3} \frac{\Bell_{1,r;1}^+(0,x_2,x_3)}{x_3}\,.
}
Since the domain $F(0)$ does not exist for the case $p=1$, it suffices to find the maximum of the function $\frac{\Bell^{+}_{1,r;1}}{x_3}$ in the domain $R$. Recall that in this domain our function is defined by a simple formula ~\eqref{191001}, whence
\eqlb{151002}{
 \frac{\Bell_{1,r;1}^+(0,x_2,x_3)}{x_3} = \Big( \frac{x_2}{x_3} \Big)^{r-1}\!\!.
}
From~\eqref{111201} we have $x_3 \ge \half x_2$ and the maximum value is attained on the lower boundary, i.\,e.\,, $x_3 = \half x_2$, which implies
$$
C(1,r) = 2^{1-\frac1r}.
$$
Thus, we have proved the formula stated in Remark~\ref{Rem1} for the case $1 < r \leq 2$. For $r\geq 2$ it was proved in~\cite{SV2}, see~\eqref{eq091101}.

\section{Comments on multidimensional case}
The proof of Theorem~\ref{DimHeatTh} follows the reasoning from~\cite{SlavinZ}. Following the arguments of Lemma~5.2 there it is possible to prove the following assertion.  If $G$ is a non-negative locally concave function on~$\Omega_\eps^3$ and $\phi \in \BMO(\mathbb{R}^d)$, $\|\phi \|_{\scriptscriptstyle \Kerr}<\eps$, then 
$$
G\Big(\phi_{\scriptscriptstyle \Kerr}(y,t), \big(\phi^2\big)_{\scriptscriptstyle \Kerr}(y,t), \big(|\phi|^p\big)_{\scriptscriptstyle \Kerr}(y,t)\Big) \geq 
\big(G(\phi, \phi^2, |\phi|^p)\big)_{\scriptscriptstyle \Kerr}(y,t), \qquad y \in \mathbb{R}^d, \quad t>0.
$$
Fix a point $(y,t) \in \mathbb{R}^d\times\mathbb{R}_+$ and use this inequality for the function $G = \Bell_{p,r;\eps}^+$ and for $\phi = \vf - \vf_{\scriptscriptstyle \Kerr}(y,t)$. Then we obtain
\eqlb{eq150901}{
\Bell_{p,r;\eps}^+\Big(0, \big((\vf - \vf_{\scriptscriptstyle \Kerr}(y,t))^2\big)_{\scriptscriptstyle \Kerr}(y,t), \big(|\vf-\vf_{\scriptscriptstyle \Kerr}(y,t)|^p\big)_{\scriptscriptstyle \Kerr}(y,t)\Big) \geq 
\big(|\vf-\vf_{\scriptscriptstyle \Kerr}(y,t)|^r\big)_{\scriptscriptstyle \Kerr}(y,t).
}
Now, we use the definitions of the Bellman function $\Bell_{p,r;\eps}^+$ and the constant $C(p,r)$ in the corresponding inequality. We estimate the left-hand side of~\eqref{eq150901}:
$$
\Bell_{p,r;\eps}^+\Big(0, \big((\vf - \vf_{\scriptscriptstyle \Kerr}(y,t))^2\big)_{\scriptscriptstyle \Kerr}(y,t), \big(|\vf-\vf_{\scriptscriptstyle \Kerr}(y,t)|^p\big)_{\scriptscriptstyle \Kerr}(y,t)\Big) \leq C(p,r)^r  \eps^{r-p} \big(|\vf-\vf_{\scriptscriptstyle \Kerr}(y,t)|^p\big)_{\scriptscriptstyle \Kerr}(y,t).
$$
Therefore, we obtain
$$
\eps^{r-p} C(p,r)^r  \big(|\vf-\vf_{\scriptscriptstyle \Kerr}(y,t)|^p\big)_{\scriptscriptstyle \Kerr}(y,t)  \geq 
\big(|\vf-\vf_{\scriptscriptstyle \Kerr}(y,t)|^r\big)_{\scriptscriptstyle \Kerr}(y,t).
$$
Rewrite the last inequality in the form
\eqlb{eq150902}{
\eps^{r-p} C(p,r)^r  \int\limits_{\mathbb{R}^d} |\vf(\tilde y)-\vf_{\scriptscriptstyle \Kerr}(y,t)|^p \Kerr_t(y-\tilde y) d \tilde y  \geq 
\int\limits_{\mathbb{R}^d} |\vf(\tilde y)-\vf_{\scriptscriptstyle \Kerr}(y,t)|^r \Kerr_t(y-\tilde y) d \tilde y.
}
At this moment we recall that $\Kerr_t$ coincides either with $\KerP$ or $\KerH$. For each of these kernels there exists the function 
$$
Q(t) =
\begin{cases}
 \frac{\pi^{\frac{d+1}{2}}}{\Gamma(\frac{d+1}{2})} t^d,&  \Kerr=\KerP,\\
  (4\pi t)^{\frac{d}{2}}, &  \Kerr=\KerH,
\end{cases}
$$
such that $Q(t)K_t(y) \to 1$ for any $y \in \mathbb{R}^d$  when $t \to +\infty$. Moreover, this convergence is monotone and uniform on compact subsets of $\mathbb{R}^d$.
We also note that since the function $\vf$ lies in $L^p(\mathbb{R}^d)$, we have $\vf_{\scriptscriptstyle \Kerr}(y,t) \to 0$ for any $y \in \mathbb{R}^d$ when~$t \to +\infty$. Multiplying both sides of~\eqref{eq150902} by $Q(t)$ and passing~$t \to +\infty$, we finally get
\eqlb{eq150903}{
\eps^{r-p} C(p,r)^r  \int_{\mathbb{R}^d} |\vf(\tilde y)|^p  d \tilde y  \geq 
\int_{\mathbb{R}^d} |\vf(\tilde y)|^r d \tilde y.
}
Theorem~\ref{DimHeatTh} is proved.

\begin{appendices}

\section{Smoothness of the Bellman function $B_2$}\label{Ap1}
In this appendix we prove $C^1$-smoothness of $B_2$. Since we will deal only with this candidate we omit index $2$. 
Then it will be convenient to denote the partial derivatives of $B$ placing the corresponding 
variable in index: $B_{x_i}=\diff{B}{x_i}$.

\begin{Le}
\label{130601}
The function $B$ is $C^1$-smooth.
\end{Le}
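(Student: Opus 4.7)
The plan is to combine smoothness within each leaf of the foliation with matching of values and first partial derivatives across every internal interface enumerated in~\eqref{201202}--\eqref{201206}, together with the skeleton, the parabolic upper boundary, and the symmetry plane $x_1=0$.

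Inside each of the open subdomains $\Xi\cii{\mathrm L\pm}$, $\Xi\cii{\mathrm R\pm}$, $\Xi\cii{\mathrm{ch}\pm}$, $F(0)$, and $R$, the candidate $B_2$ is either given by an explicit smooth formula (as in~\eqref{191001} on $R$ or~\eqref{090601} on $\Xi\cii{\mathrm{ch}+}$) or by an implicit parametrization where the parameter depends smoothly on $x$. In the latter case one must verify that the implicit-function theorem applies: for the chord domain this is Lemma~\ref{le300401} (the map $a\mapsto x_3$ is strictly monotone); for the left and right fans this follows from the strict monotonicity of $\wL$ and $\wR$ established in Lemma~\ref{lemmaSignW_r} and Lemma~\ref{ww_increase}, combined with smoothness of $m_s$, $k_s$, $\wL$, $\wR$ in their arguments. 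This yields $C^1$-regularity in the interior of each piece, and the partial derivatives are given by~\eqref{050801}, \eqref{eq12} on $\Xi\cii{\mathrm L+}$ and by~\eqref{250601} on $\Xi\cii{\mathrm R+}$.

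The bulk of the work is checking matching across the internal interfaces. For each such interface I would compute the limiting value of the foliation parameter (for instance $\xi\to u$ at the boundary where a fan meets the chord domain, $\xi\to\infty$ at the boundary between $\FL(\eps)$ and the upper cap of $F(0)$, $v\to 0$ at the origin, etc.) from both adjacent formulas, and verify coincidence of $B_2$ and of $(B_{x_1},B_{x_2},B_{x_3})$. The natural order is the one suggested by the lists~\eqref{201202}--\eqref{201206}: match $\Xi\cii{\mathrm L+}$ with $\Xi\cii{\mathrm{ch}+}$ across $x_3=\frac{\Delta_-(x_1+\Delta_+)^p+\Delta_+(x_1-\Delta_-)^p}{2\eps}$; match $\Xi\cii{\mathrm{ch}+}$ with $\Xi\cii{\mathrm R+}$ (for $x_1\ge\Delta_+$) across the analogous surface; match $\Xi\cii{\mathrm{ch}+}$ with $R$ across $x_3=x_2^{p-1}x_1^{2-p}$; match $\Xi\cii{\mathrm R+}$ with $R$ across $x_3=(x_1+\Delta_-)^{p-2}x_2$; match $R$ with $F(0)$ across $x_3=(2\eps)^{p-2}x_2$. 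Smoothness across $x_1=0$ inside $F(0)$ is automatic from~\eqref{eq051002}, and smoothness of the pair $\Xi\cii{\mathrm L+}\cup\Xi\cii{\mathrm R-}$ through $F(0)$ will follow because the limiting values of $\PsiL$ and its derivatives at $\xi=\eps$ match the corresponding values on the boundary fan $\FL(\eps)$.

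The main obstacles are the fan-to-chord transitions, where one side of the interface carries a function implicitly defined through $\wL$ (or $\wR$) and the other side a rational expression in $a,b$ obtained from~\eqref{231001}--\eqref{231002}. The computation reduces, via~\eqref{m-diff} and the identity $\PsiL\bigl(\wL(u;p,\eps)\bigr)=\frac1\eps\wL(u;r,\eps)$ from~\eqref{190904} (and its $\mathrm R$-analogue~\eqref{220905}), to checking that the slope $\KL$ (respectively $\KR$) computed from~\eqref{eq10} at the extreme value of $\xi$ coincides with the slope $\frac{b^r-a^r}{b-a}$ of the long chord, and similarly for the $x_2$-derivative via~\eqref{eq12}. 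The other delicate step is the match between $\Xi\cii{\mathrm R+}$ and $R$: on $R$ the function is linear on the two-dimensional leaf, and one must verify that the three partial derivatives of~\eqref{191001} agree with the expressions~\eqref{250601} in the limit $\xi\to 0$ (where $\PsiR(0)=0$, $\HR$ degenerates to the intersection with $R(v)$, and the relation~\eqref{220902} encodes precisely the identification $x_3=v^{p-2}x_2 \leftrightarrow B=v^{r-2}x_2$). Once these verifications are carried out, the lemma follows.
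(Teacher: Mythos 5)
Your overall strategy is the paper's: $C^2$-smoothness inside each piece (justified exactly by the monotonicity Lemmas~\ref{le300401}, \ref{lemmaSignW_r}, \ref{ww_increase}), then matching of the candidate and its first derivatives across the interfaces listed in \eqref{201202}--\eqref{201206}, with the fan-to-chord transitions handled through $\AAA$-type identities. However, two of your concrete identifications of where the interfaces sit in the parameter $\xi$ are wrong. The interface between $\Xi\cii{\mathrm R+}$ and $R$ is \emph{not} reached as $\xi\to0$: in the fan $\FR(u)$ the value $\xi=0$ (equivalently $h=0$) corresponds to the endpoint $W_-(v)$, i.e.\ to the graph of $\Ak{p}$, which is part of $\partial\Omega^3_\eps$ and needs no matching; the interface with $R$ is at the other end of the parameter range, $\xi=u$ with $0\le u\le\eps$, where the first line of \eqref{220904} and the boundary condition \eqref{220902} apply. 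Likewise, $\xi\to\infty$ in $\Xi\cii{\mathrm L+}$ or in $F(0)$ corresponds to $h=0$, i.e.\ to the graph of $\Am{p}$ on $\partial\Omega^3_\eps$, not to an internal interface; the interface between $F(0)$ and $R$ is at $\xi=\eps$, where \eqref{140903} and \eqref{140902} give the match. If you ran the verifications at the surfaces you named, you would be matching along pieces of the fixed boundary rather than along the actual internal interfaces.

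The second, more substantive issue is the interface between $\Xi\cii{\mathrm L+}$ and $F(0)$, i.e.\ the fan $\FL(\eps)$ lying in the plane $x_2=2\eps x_1$. Since on $F(0)$ the candidate is defined by \eqref{eq051002} to be independent of $x_1$, continuity of $\PsiL$ and its derivatives only gives matching of the values; the real content is that $B_{x_1}$ computed from the $\Xi\cii{\mathrm L+}$ side vanishes on $x_1=\Delta_-$, and this is not automatic: it requires the differential equation \eqref{eq9} for $\KL$ (this is the computation \eqref{eq061001} in the paper), after which continuity of $B_{x_2}$ across this interface also follows. Your plan does not identify this step, and ``limiting values of $\PsiL$ at $\xi=\eps$'' is not the relevant mechanism ($\xi=\eps$ parametrizes the $F(0)$--$R$ interface). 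A further simplification you could borrow from the paper: every internal interface is ruled by extremal segments on which $B$ is linear with prescribed endpoint values, so values match automatically and it suffices to check two transversal first derivatives (e.g.\ $B_{x_2},B_{x_3}$, or $B_{x_1},B_{x_2}$ for the interface above) rather than all three. With these corrections your outline becomes the paper's proof.
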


\begin{proof}
By the definition, it is clear that $B$ is $C^2$-smooth on each subdomain. Therefore, we need to check
that this function is $C^1$-smooth on the borders between subdomains. Since all borders consists of 
extremals, $B$ is linear in corresponding direction. Therefore, it suffices to verify continuity
of the derivatives in any two transversal directions. We will check continuity of $B_{x_3}$ and $B_{x_2}$.
The only exception is the boundary between $\Xi\cii{\mathrm L+}$ and $F(0)$ that goes along $x_3$ (it belongs
to the plane $x_2=2\eps x_1$). In this case we check continuity of $B_{x_1}$ instead of $B_{x_3}$.
 
Let us start with the domain $\Xi\cii{\mathrm L+}$. On the boundary $x_2=2\eps x_1$ (or more exactly $x_1=\Delta_-$)
the function $B$ is continuous by definition: we have defined $B$ on $F(0)$ using this continuity.

{\bf Boundary $\mathbf{\Xi\cii{\mathrm L+}\Big| F(0)}$}. To check $C^1$-smoothness, we calculate the derivative $B_{x_1}$ and show 
that it is zero on the boundary
$x_1=\Delta_-$. It is what we need because the function $B$ does not depend 
on $x_1$ on the domain $F(0)$.
We use the representation of $B$ in it initial form~\eqref{eq3}:
\eqlb{190601}{
B_{x_1}=rv^{r-1}\diff v{x_1}+\Big(\diff\KL{u}\cdot\diff u{x_1}+\diff\KL{h}
\cdot\diff h{x_1}\Big)(x_1-v)+
\KL\Big(1-\diff v{x_1}\Big),
}
where $v=u-\eps=x_1-\Delta_-$, therefore,
$$
\diff v{x_1}=\diff u{x_1}=\frac{d+x_1}d\,,
$$
and $v=0$ on the boundary. To find the derivative $\diff h{x_1}$ we use formula~\eqref{161001} and 
the relation $\eps m_p'(v)=m_p(v)-pv^{p-1}$ (see~\eqref{m-diff}). As a result, we get
$$
\diff h{x_1}\Big|_{x_1=\Delta_-}\!\!\!=\ \frac hd\,.
$$ 
From~\eqref{eq9} we see that 
$$
h\diff\KL{h}\big|_{x_1=\Delta_-} = \KL-\eps\diff\KL{u}.
$$

Gathering all these formulas, we obtain
\eqlb{eq061001}{
B_{x_1}\big|_{x_1=\Delta_-}=\diff\KL{u}\cdot\frac{d+x_1}d\cdot x_1+\Big(\KL-\eps\diff\KL{u}\Big)\cdot\frac{x_1}d
-\frac{x_1}d\KL=\diff\KL{u}\cdot\frac{x_1}d\cdot(x_1+d-\eps)=0\,,
}
since $\eps-d=\Delta_-=x_1$ on this boundary.

To prove continuity of $B_{x_2}$, note that from~\eqref{eq051002} for the points in the intersection $F(0) \cap \Xi\cii{\mathrm L+} = F_L(\eps)$ we have 
$$
B_{x_2}\Big|_{_{F(0)}} \!\!\! =\  \frac{1}{2\eps}B_{x_1}\Big|_{_{\Xi\cii{\mathrm L+}}}\!\!\! + \ B_{x_2}\Big|_{_{\Xi\cii{\mathrm L+}}},
$$
and the first summand at the right-hand vanishes due to~\eqref{eq061001}.

{\bf Boundary $\mathbf{F(0)| R}$}. Now, we check $C^1$-smoothness of $B$ on the boundary $x_3=(2\eps)^{p-2}x_2$ between~$F(0)$ and 
$R$ (see~\eqref{201202}). According to~\eqref{191001} the value of $B$ from $R$ is $(2\eps)^{r-2}x_2$.
The value from~$F(0)$ looks much more difficult (see~\eqref{071011}):
$$
\Big[e\PsiL\Big(\frac{(2\eps)^p}{2e}-\frac{\eps^p}e\Gamma(p+1)\Big)+\eps^{r-1}\Gamma(r+1)\Big]\frac{x_2}{2\eps}\,.
$$
However, the fact that these two expressions are equal is known: this is just formula~\eqref{220901}
for $u=\eps$.

The calculations will be simpler if we use representation~\eqref{220601}--\eqref{220602} for the function
on $F(0)$. Then the boundary $x_3=(2\eps)^{p-2}x_2$ corresponds to $\xi=\eps$ and we have
\eqlb{140903}{
\wL(\eps;s,\eps)=\frac{(2\eps)^s}{2e}-\frac{\eps}e m_s(0)\qquad\text{and}\qquad B(x)=(2\eps)^{r-2}x_2\,.
}

Since this boundary is a piece of two-dimensional plane where $B$ is linear, for continuity of the
gradient it is sufficient to check continuity of $B_{x_3}$ only. On $R$ we have (see~\eqref{191001}):
$$
B_{x_3}=\frac{r-2}{p-2}\Big(\frac{x_3}{x_2}\Big)^{\frac{r-p}{p-2}}\qquad\text{and}\qquad
B_{x_3}\big|_{x_3=(2\eps)^{p-2}x_2}=\frac{r-2}{p-2}(2\eps)^{r-p}\,.
$$
On $F(0)$ we use the same representation~\eqref{220601}--\eqref{220602} and get
\eqlb{110701}{
B_{x_3}=\frac{\wL'(\xi;r,\eps)}{\wL'(\xi;p,\eps)}\,.
}
Formula~\eqref{dwu} yields $\wL'(\eps;s,\eps)=\tfrac1{2e}(s-2)(2\eps)^{s-1}$, whence
\eqlb{140902}
{
B_{x_3}\big|_{\xi=\eps}=\frac{r-2}{p-2}(2\eps)^{r-p}\,.
}

 We have considered all boundaries of $F(0)$ and now we consider boundaries of $\Xi\cii{\mathrm L+}$.
It has two neighbours: already considered $F(0)$ and $\Xi\cii{\mathrm{ch}+}$, which we have to check.

{\bf Boundary $\mathbf{\Xi\cii{\mathrm L+}\Big| \Xi\cii{\mathrm{ch}+}}$}. This
is the boundary
$$
x_3=\frac{\Delta_-(x_1+\Delta_+)^p+\Delta_+(x_1-\Delta_-)^p}{2\eps}
$$
(see~\eqref{201204}--\eqref{201206}). This boundary is foliated by the left halves of the chords 
$[U(u-\eps),U(u+\eps)]$, where~$B$ is linear with the prescribed values at the ends. Therefore, it
takes the same values not depending on what side of the boundary we consider our function. However, we
can verify this formally plugging the values $\xi=u=v+\eps=x_1+d$ into~\eqref{eq080505}:
$$
B(x)=(u-\eps)^r+\Big[m_r(u-\eps)+\frac1{2\eps}\big((u+\eps)^r-(u-\eps)^r\big)-m_r(u-\eps)\Big]\Delta_-=
\frac{\Delta_-(x_1+\Delta_+)^r+\Delta_+(x_1-\Delta_-)^r}{2\eps}\,.
$$
This expression coincides with~\eqref{090601}, since $a=u-\eps=x_1-\Delta_-$ and $b=u+\eps=x_1+\Delta_+$.

Now, we have to check continuity of $B_{x_3}$ and $B_{x_2}$ on this boundary. In $\Xi\cii{\mathrm L+}$ we have:
\eqlb{230602}{
B_{x_3}{\buildrel\eqref{050801}\over=}\eps\PsiL'(\xi)
{\buildrel\eqref{Psi_L_def}\over=}\frac{\wL'(\xi;r,\eps)}{\wL'(\xi;p,\eps)}{\buildrel\eqref{280401}\over=}
\frac{\AAA(\xi,\eps,r)}{\AAA(\xi,\eps,p)}\,.
}
On $\Xi\cii{\mathrm{ch}+}$ formula~\eqref{261211} yields
\begin{align}
\label{130701}
1{\buildrel\eqref{231002}\over=}
w_{x_3}(a,b;p) &= \frac{a_{x_3}(b-x_1)}{(b-a)^2}\AAA(\half(b+a),\half(b-a),p),
\\
\label{130702}
B_{x_3}{\buildrel\eqref{090601}\over=}
w_{x_3}(a,b;r) &= \frac{a_{x_3}(b-x_1)}{(b-a)^2}\AAA(\half(b+a),\half(b-a),r),
\end{align}
whence
\eqlb{230603}{
B_{x_3}=\frac{\AAA(\half(b+a),\half(b-a),r)}{\AAA(\half(b+a),\half(b-a),p)}\,.
}
Therefore, on the line $\xi=u$, $b=u+\eps$, $a=u-\eps$, the expressions from~\eqref{230602} and~\eqref{230603}
are equal to
$$
\frac{\AAA(u,\eps,r)}{\AAA(u,\eps,p)}\,.
$$

On this boundary, it remains to check continuity of $B_{x_2}$. In $\Xi\cii{\mathrm L+}$ we use 
formula~\eqref{eq12} rewritten in terms of $\wL$ with the help of~\eqref{eq081002} and~\eqref{Psi_L_def}:
$$
B_{x_2}(x)=\frac1{2\eps}\Big[\frac1\eps e^{\frac u\eps}\wL(\xi;r,\eps)-
\frac1\eps e^{\frac u\eps}\wL(\xi;p,\eps)\frac{\wL'(\xi;r,\eps)}{\wL'(\xi;p,\eps)}+\eps m_r'(v)-
\eps m_p'(v)\frac{\wL'(\xi;r,\eps)}{\wL'(\xi;p,\eps)}\Big]\,.
$$
On the boundary $\xi=u=v+\eps=x_1+d$ we use relations~\eqref{w_L_def}, \eqref{280401}, \eqref{m-diff} and 
rewrite this formula as follows:
\eqlb{250602}{
B_{x_2}(x)\big|_{\xi=u}=\frac1{2\eps}\Big[\Big(\frac{(u+\eps)^r-(u-\eps)^r}{2\eps}-rv^{r-1}\Big)-
\Big(\frac{(u+\eps)^p-(u-\eps)^p}{2\eps}-pv^{p-1}\Big)\frac{\AAA(u,\eps,r)}{\AAA(u,\eps,p)}\Big]\,.
}

In $\Xi\cii{\mathrm{ch}+}$ we differentiate formula~\eqref{230601} with respect to $x_2$ and as in~\eqref{261211}
we get
\eqlb{260601}{
w_{x_2}
=\frac{a_{x_2}(b-x_1)\big[sa^{s-1}(b-a)+a^s-b^s\big]+b_{x_2}(x_1-a)\big[sb^{s-1}(b-a)+a^s-b^s\big]}{(b-a)^2}\,.
}
To obtain relation between $a_{x_2}$ and $b_{x_2}$ we differentiate~\eqref{231001}:
\eqlb{260602}{
1=b_{x_2}(x_1-a)-a_{x_2}(b-x_1)\,.
}
Now we replace the expression $a_{x_2}(b-x_1)$ in~\eqref{260601} using~\eqref{260602} and get
$$
\begin{aligned}
w_{x_2}
=&\frac{b_{x_2}(x_1-a)\big[s(b-a)(a^{s-1}+b^{s-1})+2a^s-2b^s\big]-\big[sa^{s-1}(b-a)+a^s-b^s\big]}{(b-a)^2}
\\
=&\frac{b_{x_2}(x_1-a)}{(b-a)^2}\AAA\big(\half(b+a),\half(b-a),s\big)-\frac{sa^{s-1}(b-a)+a^s-b^s}{(b-a)^2}\,.
\end{aligned}
$$
Whence,
\begin{align}
\label{260603}
B_{x_2}{\buildrel\eqref{090601}\over=}
w_{x_2}(a,b;r) &=\frac{b_{x_2}(x_1-a)}{(b-a)^2}\AAA\big(\half(b+a),\half(b-a),r\big)-\frac{ra^{r-1}(b-a)+a^r-b^r}{(b-a)^2},
\\
\label{260604}
0{\buildrel\eqref{231002}\over=}
w_{x_2}(a,b;p) &= \frac{b_{x_2}(x_1-a)}{(b-a)^2}\AAA\big(\half(b+a),\half(b-a),p\big)-\frac{pa^{p-1}(b-a)+a^p-b^p}{(b-a)^2}.
\end{align}
Therefore,
\eqlb{260605}{
B_{x_2}=\frac{b^r-a^r-ra^{r-1}(b-a)}{(b-a)^2}-\frac{b^p-a^p-pa^{p-1}(b-a)}{(b-a)^2}\cdot
\frac{\AAA\big(\half(b+a),\half(b-a),r\big)}{\AAA\big(\half(b+a),\half(b-a),p\big)}\,.
}
Since on the boundary we have $b=u+\eps$, $a=u-\eps=v$, we see that the expressions in~\eqref{250602}
and in~\eqref{260605} coincide.

Now we consider two remaining boundaries of $\Xi\cii{\mathrm{ch}+}$. This is the boundary $x_3=x_2^{p-1}x_1^{2-p}$ with
$R$ (if $(x_1,x_2)\in\omega_2\cup\omega_3$, see~\eqref{201204} and~\eqref{201205}) and the boundary
$x_3=\frac{\Delta_-(x_1-\Delta_+)^p+\Delta_+(x_1+\Delta_-)^p}{2\eps}$ with $\Xi\cii{\mathrm R+}$
(if $(x_1,x_2)\in\omega_4$, see~\eqref{201206}).

{\bf Boundary $\mathbf{\Xi\cii{\mathrm{ch}+}\Big| R}$}. This is the boundary $x_3=x_2^{p-1}x_1^{2-p}$. It consists of the chords with $[U(0),U(b)]$, $b\in[0,2\eps]$: $x_2=bx_1$, 
$x_3=b^{p-1}x_1$. The value of our function on this chord is $B=b^{r-1}x_1$ in both domains.

The derivatives on $R$ (see~\eqref{191001}) are:
\begin{align}
\label{270601}
B_{x_3}=\frac{r-2}{p-2}\Big(\frac{x_3}{x_2}\Big)^{\frac{r-p}{p-2}}=\frac{r-2}{p-2}\cdot b^{r-p},
\\
\label{270602}
B_{x_2}=\frac{p-r}{p-2}\Big(\frac{x_3}{x_2}\Big)^{\frac{r-2}{p-2}}=\frac{p-r}{p-2}\cdot b^{r-2}.
\end{align}
Since $\AAA(\half b,\half b,s){\buildrel\eqref{051003}\over=}(s-2)b^s$, on this boundary the value of~\eqref{230603} coincides with~\eqref{270601}
and the value of~\eqref{260605} coincides with~\eqref{270602}.

{\bf Boundary $\mathbf{\Xi\cii{\mathrm{ch}+}\Big| \Xi\cii{\mathrm R+}}$}. This boundary is foliated by the right half of 
the chords $[U(u-\eps),U(u+\eps)]$ ($u\ge\eps$), and $B$ is linear on such chord with the prescribed 
values at the ends. Therefore, the boundary values from both sides of the boundary are the same. 

Now, we check continuity of $B_{x_3}$. On $\Xi\cii{\mathrm{ch}+}$ these derivatives are already calculated, 
we need only to plug the boundary values $b=u+\eps$ and $a=u-\eps$ into~\eqref{230603}:
\eqlb{290601}{
B_{x_3}=\frac{\AAA(u,\eps,r)}{\AAA(u,\eps,p)}\,.
}
On $\Xi\cii{\mathrm R+}$ we get the same expression if we substitute $\xi = u$ to the following formula:
\eqlb{eq091001}{
B_{x_3}\Big|_{\Xi\cii{\mathrm R+}} 
{\buildrel\eqref{250601}\over=} -\eps \PsiR' 
{\buildrel\eqref{Psi_R_def}\over=}  \frac{\wR'(\xi; r,\eps)}{\wR'(\xi; p,\eps)} 
{\buildrel\eqref{w_ww_relation}\over=} \frac{\wL'(\xi; r,\eps)}{\wL'(\xi; p,\eps)}
{\buildrel\eqref{280401}\over=} \frac{\AAA(\xi,\eps,r)}{\AAA(\xi,\eps,p)}, \qquad \xi \geq \eps.
}

Verifying continuity of $B_{x_2}$, we use~\eqref{eq081003} and~\eqref{Psi_R_def} to rewrite the second formula of~\eqref{250601} in terms of $\wR$:
\eqlb{040701}{
B_{x_2}(x)=\frac1{2\eps}\Big[\Big(\eps k_r'(v)+\frac1\eps e^{-\frac u\eps}\wR(\xi;r,\eps)\Big)-
\Big(\eps k_p'(v)+\frac1\eps e^{-\frac u\eps}\wR(\xi;p,\eps)\Big)\frac{\wR'(\xi;r,\eps)}{\wR'(\xi;p,\eps)}\Big]\,.
}
On the boundary $\xi=u=v-\eps=x_1-d$ we apply already used relations~\eqref{220904}, \eqref{w_ww_relation}, \eqref{280401}, 
\eqref{m-diff}, and rewrite this formula as follows:
\eqlb{290602}{
B_{x_2}(x)\big|_{\xi=u}=\frac1{2\eps}\Big[\Big(\frac{(u-\eps)^r-(u+\eps)^r}{2\eps}+rv^{r-1}\Big)-
\Big(\frac{(u-\eps)^p-(u+\eps)^p}{2\eps}+pv^{p-1}\Big)\frac{\AAA(u,\eps,r)}{\AAA(u,\eps,p)}\Big]\,.
}
On the face of it, this expression differs from~\eqref{260605} with $b=u+\eps=v$, $a=u-\eps$. However, in fact
they are equal. It can be verified by the direct calculation using the definition of the function $\AAA$ 
(see~\eqref{051003}). But we check this in other way, deducing an alternative formula
for $B_{x_2}$ on $\Xi\cii{\mathrm{ch}+}$. Namely, in~\eqref{260601} we remove not $a_{x_2}$ but $b_{x_2}$
using the same relation~\eqref{260602}:
$$
\begin{aligned}
w_{x_2}
=&\frac{a_{x_2}(b-x_1)\big[s(b-a)(a^{s-1}+b^{s-1})+2a^s-2b^s\big]+\big[sb^{s-1}(b-a)+a^s-b^s\big]}{(b-a)^2}
\\
=&\frac{a_{x_2}(b-x_1)}{(b-a)^2}\AAA\big(\half(b+a),\half(b-a),s\big)+\frac{ba^{s-1}(b-a)+a^s-b^s}{(b-a)^2}\,.
\end{aligned}
$$
Then instead of~\eqref{260603} and~\eqref{260604} we get
\begin{align}
\label{020701}
B_{x_2}{\buildrel\eqref{090601}\over=}
w_{x_2}(a,b;r) &=\frac{a_{x_2}(b-x_1)}{(b-a)^2}\AAA\big(\half(b+a),\half(b-a),r\big)+\frac{rb^{r-1}(b-a)+a^r-b^r}{(b-a)^2},
\\
\label{020702}
0{\buildrel\eqref{231002}\over=}
w_{x_2}(a,b;p) &=\frac{a_{x_2}(b-x_1)}{(b-a)^2}\AAA\big(\half(b+a),\half(b-a),p\big)+\frac{pb^{p-1}(b-a)+a^p-b^p}{(b-a)^2}.
\end{align}
Therefore,
\eqlb{020703}{
B_{x_2}=-\frac{b^r-a^r-rb^{r-1}(b-a)}{(b-a)^2}+\frac{b^p-a^p-pb^{p-1}(b-a)}{(b-a)^2}\cdot
\frac{\AAA\big(\half(b+a),\half(b-a),r\big)}{\AAA\big(\half(b+a),\half(b-a),p\big)}\,,
}
and this expression evidently coincides with~\eqref{290602} on the boundary $b=u+\eps=v$, $a=u-\eps$.

It remains to check one boundary, namely, between $\Xi\cii{\mathrm R+}$ and $R$. 

{\bf Boundary $\mathbf{\Xi\cii{\mathrm R+}\Big| R}$}. This is the boundary
$x_3=(x_1+\Delta_-)^{p-2}x_2$ or $\xi=u=v-\eps=x_1-d$ and it is foliated by the extremal lines $x_3=v^{p-2}x_2$,
$x_2=2(v-\eps)x_1-v^2+2v\eps$, $\eps\le v\le2\eps$. On this boundary we have to use the first line of definition 
of $\wR$~\eqref{220904} for $\xi\le\eps$. Therefore, (see~\eqref{eq080507}), on any such extremal line we have
$$
B=v^r-\big(k_r(v)-k_r(v)+2uv^{r-2}\big)\Delta_-=v^{r-2}(v^2-2u\Delta_-)=v^{r-2}x_2\,,
$$
that coincides with the value on $R$ (see~\eqref{191002}).

Using as before~\eqref{Psi_R_def} and the first formula in~\eqref{250601} for $B_{x_3}$ on $\Xi\cii{\mathrm R+}$, we get
$$
B_{x_3}=\frac{\wR'(\xi;r,\eps)}{\wR'(\xi;p,\eps)}{\buildrel\eqref{ww_dif}\over=}\frac{r-2}{p-2}(\xi+\eps)^{r-p},
$$
and for $\xi=u=v-\eps$ we get the same expression as in~\eqref{270601} for $x_3=v^{p-2}x_2$.
The second formula in~\eqref{250601} for $B_{x_2}$ yields~\eqref{040701}, where we have use another
expression for $\wR$ (see the first line of~\eqref{220904}). As a result, we get not~\eqref{290602}, but
\begin{align*}
B_{x_2}\big|_{\xi=u}=&\frac1{2\eps}\Big[\big(rv^{r-1}-2uv^{r-2}\big)
-\big(pv^{p-1}-2uv^{p-2}\big)\frac{(r-2)v^{r-3}}{(p-2)v^{p-3}}\Big]
\\
=&\frac1{2\eps}\Big[\big((r-2)v^{r-1}+2\eps v^{r-2}\big)
-\big((p-2)v^{p-1}+2\eps v^{p-2}\big)\frac{r-2}{p-2}v^{r-p}\Big]=\frac{p-r}{p-2}v^{r-2}\,,
\end{align*}
therefore, we get the same expression as in~\eqref{270602} for $x_3=v^{p-2}x_2$.

The only boundary intersecting the plane $x_1=0$ is the boundary between $F(0)$ and $R$ considered above. $C^1$-smoothness of $B$ in the domain $x_1<0$ follows from the symmetry. This completes the proof of $C^1$-smoothness of the function $B$.
\end{proof}

\section{Convexity/concavity of the Bellman function $B_2$}\label{Ap2}
In this appendix we prove convexity/concavity of $B_2$. Since we will deal only with this candidate we omit index $2$, as it was done in Appendix~\ref{Ap1}.

\begin{Le} 
\label{200401}
The equality $\sign B_{x_3x_3}=\sign(r-2)(r-p)$ holds on subdomains $F(0),$ $R,$ $\Xi\cii{\mathrm L\pm},$ $\Xi\cii{\mathrm R\pm},$ $\Xi\cii{\mathrm ch\pm}$.
\end{Le}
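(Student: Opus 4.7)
The plan is to proceed subdomain by subdomain, using the explicit formulas for $B_2$ from Subsection~\ref{B2_def} and computing $B_{x_3x_3}$ directly in each case. Two cases are essentially immediate: on $R$, formula~\eqref{191001} gives
$$B_{x_3x_3}=\frac{(r-2)(r-p)}{(p-2)^2}\,x_3^{\frac{r-p}{p-2}-1}x_2^{\frac{p-r}{p-2}},$$
whose sign is $\sign(r-2)(r-p)$; on the part $0\le\xi\le\eps$ of $\Xi\cii{\mathrm R+}$, the first line of~\eqref{ww_dif} collapses the ratio $\wR'(\xi;r,\eps)/\wR'(\xi;p,\eps)$ to $\tfrac{r-2}{p-2}(\xi+\eps)^{r-p}$, and combining its $\xi$-derivative with~\eqref{220906} and the negativity of $x_1-v=-\Delta_-$ yields the right sign after short bookkeeping.

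For the remaining non-chord subdomains---$\Xi\cii{\mathrm L+}$, $F(0)$ and $\Xi\cii{\mathrm R+}$ for $\xi\ge\eps$---I would use~\eqref{050801},~\eqref{220906} and, on $F(0)$, the $\xi$-parametrization~\eqref{220601}--\eqref{220602}, each of which reduces $\sign B_{x_3x_3}$ to $\sign\PsiL''$ or $-\sign\PsiR''$ up to explicit positive factors. Differentiating $\PsiL'(y)=\wL'(\xi;r,\eps)/[\eps\wL'(\xi;p,\eps)]$ once more in $y$, using~\eqref{280401} and~\eqref{w_ww_relation} to rewrite both the $\wL'$- and $\wR'$-ratios as $\AAA(\xi,\eps,r)/\AAA(\xi,\eps,p)$, and invoking Lemmas~\ref{lemmaSignW_r} and~\ref{ww_increase} to fix the signs of $\wL'(\xi;p,\eps)$ and $\wR'(\xi;p,\eps)$, one finds that in each of these subdomains the question reduces to showing
$$\sign\frac{d}{d\xi}\frac{\AAA(\xi,\eps,r)}{\AAA(\xi,\eps,p)}=\sign\frac{(r-2)(r-p)}{p-2}\qquad\text{for }\xi\ge\eps,$$
after which the symmetric subdomains $\Xi\cii{\mathrm L-},\Xi\cii{\mathrm R-}$ follow from the reflection $x_1\mapsto-x_1$. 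On $\Xi\cii{\mathrm{ch}+}$ both $\alpha=\tfrac12(a+b)$ and $\beta=\tfrac12(b-a)$ vary with $x_3$ along the one-parameter family of chords through the fixed $(x_1,x_2)$; the constraint $x_2=2\alpha x_1-\alpha^2+\beta^2$ gives $\beta\beta'=(\alpha-x_1)\alpha'$, and differentiating~\eqref{231002} determines $\alpha'$ in closed form. Using the identities $\AAA_\alpha=s\AAA(\cdot,\cdot,s-1)$ and $\AAA_\beta=2s(s-1)\beta[(\alpha+\beta)^{s-2}-(\alpha-\beta)^{s-2}]$, I would expand $B_{x_3x_3}$ and, with the aid of $\beta^2-(\alpha-x_1)^2=x_2-x_1^2$, reduce its sign to an $\AAA$-monotonicity of the same flavour.

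The step I expect to be the main obstacle is this common $\AAA$-monotonicity claim. Using the integral representation~\eqref{240403} and the identity $\partial_\xi\AAA(\xi,\eps,s)=s\AAA(\xi,\eps,s-1)$ (an easy integration by parts), it is equivalent to $\sign\partial_\xi[I(r)/I(p)]=\sign(r-p)$ for $I(s):=\int_{-\eps}^{\eps}(\eps^2-\lambda^2)(\lambda+\xi)^{s-3}\,d\lambda$. My plan is to write $I'(r)I(p)-I(r)I'(p)$ as a symmetric double integral over $[-\eps,\eps]^2$ against the nonnegative weight $(\eps^2-\lambda^2)(\eps^2-\mu^2)$; after the substitution $a=\lambda+\xi$, $b=\mu+\xi$, $z=\tfrac12\ln(a/b)$, $c=r-p$, the integrand becomes a positive multiple of $\Phi(z):=(r-3)\cosh((c-1)z)-(p-3)\cosh((c+1)z)$, which at $z=0$ equals $r-p$. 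Since $\Phi$ need not be of constant sign for large $|z|$ when $p,r>3$, the rigorous proof will require either a careful estimate exploiting that the weight concentrates $|z|$ in a range of order $\eps/\xi$, or a monotone likelihood ratio argument for the family of densities proportional to $(1-\lambda^2/\eps^2)(\lambda+\xi)^{p-3}$; I expect the bulk of the appendix computation to be spent on precisely this sign analysis.
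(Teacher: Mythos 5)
Your proposal agrees with the paper's reduction for all subdomains except at the one place you yourself flag as the main obstacle, and that obstacle is a genuine gap: the direct approach to the sign of $\partial_\xi\big[\AAA(\xi,\eps,r)/\AAA(\xi,\eps,p)\big]$ does not produce a sign-definite integrand, and the escape routes you sketch are not sufficient as stated.

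Concretely, after writing $\AAA(\xi,\eps,s)=s(s-1)(s-2)I(s)$ with $I(s)=\int_{-\eps}^{\eps}(\eps^2-\lambda^2)(\lambda+\xi)^{s-3}\,d\lambda$ and differentiating under the integral, you compare a factor of the form $(s-3)(\eps^2-\lambda^2)(\lambda+\xi)^{s-4}$ with $(\eps^2-\lambda^2)(\lambda+\xi)^{s-3}$; the mismatched powers $s-4$ and $s-3$ are precisely what forces the $\cosh$ combination $\Phi(z)=(r-3)\cosh((c-1)z)-(p-3)\cosh((c+1)z)$, which is not of constant sign. The resolution the paper uses is a further integration by parts: $I'(s)=(s-3)\int(\eps^2-\lambda^2)(\lambda+\xi)^{s-4}d\lambda=\int 2\lambda(\lambda+\xi)^{s-3}d\lambda$, i.e.\ the representation~\eqref{070501}, $\partial_\xi\AAA(\xi,\eps,s)=s(s-1)(s-2)\int_{-\eps}^{\eps}2\lambda(\lambda+\xi)^{s-3}d\lambda$. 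This matches the power $(\lambda+\xi)^{s-3}$ appearing in $\AAA$ itself, so after forming $I'(r)I(p)-I(r)I'(p)$, writing $\lambda(\eps^2-\mu^2)-\mu(\eps^2-\lambda^2)=(\lambda-\mu)(\eps^2+\lambda\mu)$, and symmetrizing in $(\lambda,\mu)$ one arrives at the integrand
\[
(\eps^2+\lambda\mu)\,(\lambda+\xi)^{p-3}(\mu+\xi)^{p-3}\,(\lambda-\mu)\big[(\lambda+\xi)^{r-p}-(\mu+\xi)^{r-p}\big],
\]
which is \emph{pointwise} of sign $\sign(r-p)$ (since $\eps^2+\lambda\mu\ge 0$ on $[-\eps,\eps]^2$ and the bracketed product has sign $\sign(r-p)$ by monotonicity of $t\mapsto t^{r-p}$). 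Neither of your proposed rescue strategies covers this cleanly: the concentration estimate fails near $\xi=\eps$, where $|z|$ is unbounded; and the covariance/MLR decomposition of $\partial_\xi E_{\mu_\xi}[(\lambda+\xi)^{r-p}]$ produces two terms of opposite sign when $p>3$, which the lemma does not exclude. The identity you do invoke, $\partial_\xi\AAA(\xi,\eps,s)=s\AAA(\xi,\eps,s-1)$, is correct but is just differentiation under the integral and lowers the power by one, so it reproduces rather than removes the mismatch; the key is the \emph{additional} integration by parts that restores the exponent to $s-3$. On $\Xi\cii{\mathrm{ch}+}$ the situation is similar in spirit: the paper again works with a shifted integral form $\AAA(s)=s(s-1)(s-2)\int_a^b(b-\lambda)(\lambda-a)\lambda^{s-3}d\lambda$ and the chord constraints $(b-x_1)a_{x_3}=(x_1-a)b_{x_3}$ to get $\AAA_{x_3}(s)\propto\int_a^b(\lambda-x_1)\lambda^{s-3}d\lambda$ with the same exponent $s-3$, and then symmetrizes to a pointwise sign-definite weight $\lambda\mu-ab+(a+b-\lambda-\mu)x_1\ge 0$; your $(\alpha,\beta)$-chain-rule route with the $\AAA_\alpha$, $\AAA_\beta$ identities is not obviously wrong, but as sketched it has the same exponent-mismatch issue and would need the analogous trick to close.

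<br>

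The parts of your plan that do match the paper are the computation on $R$, the $\xi\le\eps$ part of $\Xi\cii{\mathrm R+}$ via the first branch of~\eqref{ww_dif}, the reductions of $B_{x_3x_3}$ on $F(0)$, $\Xi\cii{\mathrm L+}$, $\Xi\cii{\mathrm R+}$ ($\xi\ge\eps$) to the monotonicity of $\AAA(\xi,\eps,r)/\AAA(\xi,\eps,p)$ using Lemmas~\ref{lemmaSignW_r} and~\ref{ww_increase}, and the use of symmetry for the ``$-$'' subdomains. Once you replace the naive differentiation of $I$ by the integrated-by-parts form $I'(s)=\int 2\lambda(\lambda+\xi)^{s-3}d\lambda$, the rest of your outline for those subdomains goes through.
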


\begin{proof}
We start with $F(0)$, where by~\eqref{110701} we have
\eqlb{110702}{
B_{x_3}=\frac{\wL'(\xi;r,\eps)}{\wL'(\xi;p,\eps)}\buildrel\eqref{280401}\over=\frac{\AAA(\xi,\eps,r)}{\AAA(\xi,\eps,p)}.
}
The same expression we have in~$\Xi\cii{\mathrm L+}$, as we have seen in~\eqref{230602}. In~$\Xi\cii{\mathrm R+}$ 
we have the same formula for $\xi\ge\eps$ due to~\eqref{eq091001}.
Therefore, in all three cases we have
$$
B_{x_3x_3}=\left(\frac{\AAA(\xi,\eps,r)}{\AAA(\xi,\eps,p)}\right)'\!\cdot\diff{\xi}{x_3}\,.
$$

Recall that we use symbol $'$ to denote differentiation with respect to $\xi$. In the cases under consideration we have
\begin{align*}
F(0):\qquad x_3'&\buildrel\eqref{220601}\over=\frac{ex_2}{2\eps^2}\wL'(\xi;p,\eps)\,;
\\
\Xi\cii{\mathrm R+}:\qquad x_3'&\buildrel\eqref{141003}\over=\frac{\Delta_-}\eps e^{-\frac u\eps}\wR'(\xi;p,\eps)
\buildrel\eqref{w_ww_relation}\over=\frac{\Delta_-}\eps e^{\frac{2\xi-u}{\eps}}\wL'(\xi;p,\eps), \qquad \xi \geq \eps\,;
\\
\Xi\cii{\mathrm L+}:\qquad x_3'&\buildrel\eqref{141001}\over=\frac{\Delta_-}\eps e^{\frac u\eps}\wL'(\xi;p,\eps)\,.
\end{align*}
Therefore, in all cases Lemma~\ref{lemmaSignW_r} implies
$$
\sign\diff{\xi}{x_3}=\sign x_3'=\sign\wL'(\xi;p,\eps)=\sign(p-2)\,,
$$
and we need to check that
\eqlb{120701}{
\sign\left(\frac{\AAA(\xi,\eps,r)}{\AAA(\xi,\eps,p)}\right)'=
\sign\big(\AAA'(\xi,\eps,r)\AAA(\xi,\eps,p)-\AAA'(\xi,\eps,p)\AAA(\xi,\eps,r)\Big)=
\sign(p-2)(r-2)(r-p)\,.
}

Differentiating~\eqref{240403} with respect to $\alpha$ and then integrating by parts, we get
\eqlb{070501}{
\diff{}\alpha\AAA(\alpha,\beta,s)=
s(s-1)(s-2)\!\!\int\limits^{\;\beta}_{-\beta\ \ }\!\!2\lambda(\lambda+\alpha)^{s-3}d\lambda\,.
}
After dividing the expression in~\eqref{120701} over $r(r-1)(r-2)p(p-1)(p-2)$, we need to verify
that $\sign(r-p)$ is the sign of the following expression:
\begin{align*}
&\int\limits^{\;\beta}_{-\beta\ \ }\!\!2\lambda(\lambda+\alpha)^{r-3}\,d\lambda
\!\!\int\limits^{\;\beta}_{-\beta\ \ }\!\!(\beta^2-\lambda^2)(\lambda+\alpha)^{p-3}d\lambda-
\!\!\int\limits^{\;\beta}_{-\beta\ \ }\!\!2\lambda(\lambda+\alpha)^{p-3}d\lambda
\!\!\int\limits^{\;\beta}_{-\beta\ \ }\!\!(\beta^2-\lambda^2)(\lambda+\alpha)^{r-3}\,d\lambda
\\
&=2\!\!\int\limits^{\;\beta}_{-\beta\ \ }\!\!\!\!\int\limits^{\;\beta}_{-\beta\ \ }
\!\!\big[\lambda(\beta^2-\mu^2)-\mu(\beta^2-\lambda^2)\big]
(\mu+\alpha)^{p-3}(\lambda+\alpha)^{r-3}\,d\lambda\,d\mu
\\
&=2\!\!\int\limits^{\;\beta}_{-\beta\ \ }\!\!\!\!\int\limits^{\;\beta}_{-\beta\ \ }
\!\!(\lambda-\mu)(\beta^2+\lambda\mu)
(\mu+\alpha)^{p-3}(\lambda+\alpha)^{r-3}\,d\lambda\,d\mu\,.
\end{align*}
After symmetrization (interchanging $\mu$ and $\lambda$) we get:
$$
\int\limits^{\;\beta}_{-\beta\ \ }\!\!\!\!\int\limits^{\;\beta}_{-\beta\ \ }
\!\!(\beta^2+\lambda\mu)(\mu+\alpha)^{p-3}(\lambda+\alpha)^{p-3}
\Big[(\lambda-\mu)\big((\lambda+\alpha)^{r-p}-(\mu+\alpha)^{r-p}\big)\Big]\,d\lambda\,d\mu\,.
$$
Since the function $t\mapsto t^s$ is increasing for $s>0$ and decreasing for $s<0$, the sign of expression 
in the square brackets coincides with $\sign(r-p)$, and this is just what we need, because all other terms
are positive.

To complete the proof for these three domains, it remains to consider the subdomain of $\Xi\cii{\mathrm R+}$ with
$\xi<\eps$:
$$
B_{x_3}\Big|_{\Xi\cii{\mathrm R+}} 
{\buildrel\eqref{250601}\over=} -\eps \PsiR' 
{\buildrel\eqref{Psi_R_def}\over=}  \frac{\wR'(\xi; r,\eps)}{\wR'(\xi; p,\eps)} 
{\buildrel\eqref{ww_dif}\over=}\frac{r-2}{p-2}(\xi+\eps)^{r-p},\qquad \ \xi<\eps\,,
$$
whence $\sign B_{x_3\xi}=\sign(p-2)(r-2)(r-p)$. Since $\sign\diff\xi{x_3}=\sign\wR'(\xi; p,\eps) = \sign(p-2)$ by Lemma~\ref{ww_increase}, we obtain
the required property: $\sign B_{x_3x_3}=\sign(r-2)(r-p)$.

Now, we start to calculate the sign of $B_{x_3x_3}$ on the domain $\Xi\cii{\mathrm{ch}+}$. On this domain we will use notation $\alpha=\half(b+a)$ and $\beta=\half(b-a)$.
Due to 
formula~\eqref{230603} we have
$$
B_{x_3x_3}=\diff{}{x_3}\left(\frac{\AAA(\alpha,\beta,r)}{\AAA(\alpha,\beta,p)}\right)=
\frac{\AAA_{x_3}(r)\AAA(p)-\AAA(r)\AAA_{x_3}(p)}{\AAA(p)^2}\,.
$$
For brevity, we omit arguments $\alpha$ and $\beta$
if this does not lead to misunderstanding.
The principal step will be the same as before, but we need some auxiliary calculations here.
First of all, we will use another integral form of the function $\AAA$. Changing the variable of
integration in~\eqref{240403}, we get the following formula
\eqlb{270801}{
\AAA(s)=s(s-1)(s-2)\int\limits_a^b(b-\lambda)(\lambda-a)\lambda^{s-3}d\lambda\,,
}
whence
\eqlb{270802}{
\AAA_{x_3}(s)=s(s-1)(s-2)\int\limits_a^b\big[b_{x_3}(\lambda-a)-
(b-\lambda)a_{x_3}\big]\lambda^{s-3}d\lambda\,.
}
From~\eqref{130701} and~\eqref{eq300403} we have
\eqlb{121002}{
(x_1-a)b_{x_3}=(b-x_1)a_{x_3}=\frac{(b-a)^2}{\AAA(p)},
}
therefore,~\eqref{270802} can be rewritten as follows:
\eqlb{030906}{
\begin{aligned}
\AAA_{x_3}(s)&=\frac{s(s-1)(s-2)(b-a)^2}{(x_1-a)(b-x_1)\AAA(p)}
\int\limits_a^b\big[(b-x_1)(\lambda-a)-(b-\lambda)(x_1-a)\big]\lambda^{s-3}d\lambda
\\
&=\frac{s(s-1)(s-2)(b-a)^3}{(x_1-a)(b-x_1)\AAA(p)}
\int\limits_a^b(\lambda-x_1)\lambda^{s-3}d\lambda\,,
\end{aligned}
}
whence
\begin{align}
B_{x_3x_3}&=\frac{r(r-1)(r-2)p(p-1)(p-2)(b-a)^3}{(x_1-a)(b-x_1)\AAA(p)^3}\times \notag
\\
&\quad\times\Bigg\{
\int\limits_a^b(\lambda-x_1)\lambda^{r-3}d\lambda\int\limits_a^b(b\!-\!\mu)(\mu\!-\!a)\mu^{p-3}d\mu\!-\!
\int\limits_a^b(b\!-\!\lambda)(\lambda\!-\!a)\lambda^{r-3}d\lambda\int\limits_a^b(\mu\!-\!x_1)\mu^{p-3}d\mu\Bigg\} \notag
\\
&=\frac{r(r-1)(r-2)p(p-1)(p-2)(b-a)^3}{(x_1-a)(b-x_1)\AAA(p)^3}
\int\limits_a^b\!\!\int\limits_a^b\big[\lambda\mu\!-\!ab\!+\!(a\!+\!b\!-\!\lambda\!-\!\mu)x_1\big]
(\lambda\!-\!\mu)\lambda^{r-3}\mu^{p-3}d\lambda\,d\mu \notag
\\
&=\frac{r(r-1)(r-2)p(p-1)(p-2)(b-a)^3}{2(x_1-a)(b-x_1)\AAA(p)^3}\times \notag
\\
&\quad\times\int\limits_a^b\!\!\int\limits_a^b\big[\lambda\mu-ab+(a+b-\lambda-\mu)x_1\big]
(\lambda-\mu)(\lambda^{r-p}-\mu^{r-p})\lambda^{p-3}\mu^{p-3}d\lambda\,d\mu\,. \label{280801}
\end{align}

In the latter equality, we use symmetrization with respect $\lambda$ and $\mu$, as it has been made before. 
Now it is easy to determine the sign of this expression.
Since $a<x_1<b$ and $\sign\AAA(p)=\sign(p-2)$, the sign of the factor in front of the integral is $\sign(r-2)$. 
The expression in the square brackets is positive. To check this it suffices to note that this is a linear
function with respect to $x_1$ and it is positive at both endpoints of the interval $[a,b]$:
\begin{itemize}
\item at $x_1=a$ we have $(\lambda-a)(\mu-a)$;
\item at $x_1=b$ we have $(b-\lambda)(b-\mu)$.
\end{itemize}
Clearly,
$$
\sign(\lambda^{r-p}-\mu^{r-p})(\lambda-\mu)=\sign(r-p)\,.
$$
Therefore, we get what is needed: $\sign B_{x_3x_3}=\sign(r-2)(r-p)$.

It remains to calculate $\sign B_{x_3x_3}$ on $R$. Since the function $B$ has a simple explicit expression there
{\bf(}see~\eqref{191001}{\bf)}
$$
B=x_2^{\frac{p-r}{p-2}}x_3^{\frac{r-2}{p-2}},
$$
it is easy to calculate $B_{x_3x_3}$ directly:
$$
\sign B_{x_3x_3}=\sign\frac{r-2}{p-2}\Big(\frac{r-2}{p-2}-1\Big)x_2^{\frac{p-r}{p-2}}x_3^{\frac{r-2}{p-2}-2}
=\sign(r-2)(r-p)\,.
$$
The consideration of this last case completes the proof of the lemma.
\end{proof}

Since in the domains $F(0)$ and $R$ the function $B$ is linear on two-dimensional planes transversal to the
direction of $x_3$, just proved Lemma~\ref{200401} ensures the required in Theorem~\ref{080510} concavity/convexity
of $B$ inside these domains. The other domains are foliated by one-dimensional extremal lines transversal to $x_2x_3$-plane, therefore, to prove concavity/convexity there we need to check positivity of the minor 
$B_{x_2x_2}B_{x_3x_3}-B_{x_2x_3}^2$.

We start with domain $\Xi\cii{\mathrm{L}+}$.

\begin{Le}
\label{070502}
$\det \{B_{x_i,x_j}\}_{2 \le i,j \le 3} = B_{x_2x_2}B_{x_3x_3}-B_{x_2x_3}^2 \geq 0$ on $\Xi\cii{\mathrm{L}+}$.
\end{Le}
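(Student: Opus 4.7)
The plan is to apply formula~\eqref{eq15}, which on every leaf $[U(v),H(u,h)]$ of the fan $\FL(u)$ expresses the required minor as
$$
\det\!\begin{pmatrix}B_{x_2x_2}&B_{x_2x_3}\\B_{x_2x_3}&B_{x_3x_3}\end{pmatrix}
= \frac{e^{-u/\eps}\,\PsiL''}{4(x_1-u)(x_1-v)}\,Q,
\qquad
Q:=e^{u/\eps}\PsiL-(h+\eps^3 m_p'')\PsiL'+\eps^2 m_r''.
$$
On the interior of each extremal we have $v<x_1<u$, hence $(x_1-u)(x_1-v)<0$ (both factors vanish at the endpoints, where the minor is trivially zero). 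The lemma therefore reduces to verifying the sign identity $\PsiL''\cdot Q\le 0$ throughout $\Xi\cii{\mathrm{L}+}$.

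For $\PsiL''$ I would use the identity $\PsiL'(\wL(\xi;p,\eps))=\tfrac{1}{\eps}\AAA(\xi,\eps,r)/\AAA(\xi,\eps,p)$ obtained from~\eqref{Psi_L_def} and~\eqref{280401}. Differentiating in $\xi$ and dividing by $\wL'(\xi;p,\eps)$ gives
$$
\PsiL''(\wL(\xi;p,\eps))=\frac{1}{\eps\,\wL'(\xi;p,\eps)}\Big(\frac{\AAA(\xi,\eps,r)}{\AAA(\xi,\eps,p)}\Big)'.
$$
The symmetrization calculation carried out in the proof of Lemma~\ref{200401} yielded $\sign\big(\AAA(\xi,\eps,r)/\AAA(\xi,\eps,p)\big)'=\sign(p-2)(r-2)(r-p)$, while Lemma~\ref{lemmaSignW_r} gives $\sign\wL'(\xi;p,\eps)=\sign(p-2)$. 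Combining the two, $\sign\PsiL''=\sign(r-2)(r-p)$.

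The analysis of $Q$ is the technical heart of the proof. Substituting $\PsiL=\tfrac{1}{\eps}\wL(\xi;r,\eps)$, $\PsiL'=\tfrac{1}{\eps}\AAA(\xi,\eps,r)/\AAA(\xi,\eps,p)$, and $h=e^{u/\eps}\wL(\xi;p,\eps)$ into $Q$ and clearing the denominator $\eps\AAA(\xi,\eps,p)$, one obtains
$$
\eps\AAA(\xi,\eps,p)\cdot Q
= e^{u/\eps}\bigl[\wL(\xi;r)\AAA(\xi,\eps,p)-\wL(\xi;p)\AAA(\xi,\eps,r)\bigr]
+\eps^3\bigl[m_r''(v)\AAA(\xi,\eps,p)-m_p''(v)\AAA(\xi,\eps,r)\bigr],
$$
where $v=u-\eps$. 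Writing $\wL(\xi;s)=-\int_{\xi}^{\infty}\wL'(\eta;s)\,d\eta$ and invoking~\eqref{280401}, \eqref{mpp+}, and~\eqref{240403}, each Wronskian-like bracket expands into a triple integral whose integrand factors through the positive constant $r(r-1)(r-2)p(p-1)(p-2)$ and non-negative weights, leaving signed differences of the form $\alpha^{r-p}-\beta^{r-p}$. After symmetrizing the inner integration variables exactly as in the proof of Lemma~\ref{200401}, the right-hand side acquires sign $\sign(p-2)\cdot\bigl(-\sign(r-2)(r-p)\bigr)$, so that $\sign Q=-\sign(r-2)(r-p)$. Together with the sign of $\PsiL''$ computed above this yields $\PsiL''\cdot Q\le 0$ and hence the claimed non-negativity of the minor.

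The principal obstacle is precisely this joint symmetrization step: unlike in Lemma~\ref{200401}, where a single ratio $\AAA_r/\AAA_p$ of integrals of the same type was studied, here two Wronskian-type expressions of genuinely different structure must be compared---the $\wL$-bracket carries the extra weight $e^{u/\eps}$ and depends on the extremal parameter $u$, while the $m_s''$-bracket does not. The cleanest route is to unify both into a single triple integral over a common $(\mu,\nu,\sigma)$-domain via the substitution $\eta=\xi+\sigma$ inside $\wL$, after which the two contributions combine into one integrand admitting the desired $\mu\leftrightarrow\nu$ symmetrization.
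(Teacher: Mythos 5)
Your reduction is sound: formula~\eqref{eq15}, the observation $v<x_1<u$ so that $(x_1-u)(x_1-v)<0$, the identification $\sign\PsiL''=\sign(r-2)(r-p)$, and the target $\sign Q=\sign(p-r)(r-2)$ all agree with the paper (this is exactly~\eqref{230701}). (A small aside: the factors $x_1-u$, $x_1-v$ sit in the \emph{denominator} of~\eqref{eq15}, so at the endpoints of a leaf the formula degenerates rather than the minor being ``trivially zero''; this is harmless but worth stating correctly.) The genuine gap is that the step you yourself call the technical heart is asserted, not proved. Writing $\wL(\xi;s)=-\int_\xi^\infty\wL'(\eta;s)\,d\eta$ and expanding both brackets does produce factors $\alpha^{r-p}-\beta^{r-p}$, but the symmetrization of Lemma~\ref{200401} is not available here: in that lemma both entries of the Wronskian were integrals over the \emph{same} range against the \emph{same} kernel ($(\lambda+\alpha)^{s-3}$, resp.\ $\lambda^{s-3}$), differing only in the weight, which is precisely what lets the symmetrized weight pair with $(\lambda-\mu)\big(\lambda^{r-p}-\mu^{r-p}\big)$ and yield a pointwise sign. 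In your expression the kernels live on different, overlapping ranges: the $\wL$-bracket compares $(\mu+\eta)^{s-3}$ with $\eta\in[\xi,\infty)$, $|\mu|\le\eps$, against $(\lambda+\xi)^{s-3}$ with $|\lambda|\le\eps$, while the $m_s''$-bracket (via~\eqref{mpp+}) compares $t^{s-3}$ with $t\in[u-\eps,\infty)$ against the same $(\lambda+\xi)^{s-3}$. Since these kernel ranges overlap, the factor $\alpha^{r-p}-\beta^{r-p}$ changes sign on the domain of integration and carries no weight of fixed correlated sign; neither bracket has an evident sign on its own, and the ``unified triple integral'' you sketch via $\eta=\xi+\sigma$ is exactly the point you flag as the principal obstacle without resolving it. The asserted final bookkeeping $\sign(p-2)\cdot\big(-\sign(r-2)(r-p)\big)$ is therefore unsupported.

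The missing idea is a different regrouping \emph{before} any symmetrization, which is how the paper proceeds: add and subtract $e^{-\xi/\eps}\eps^3 m_s''(\xi-\eps)$ and split the quantity as in~\eqref{230704}. Then (i) by~\eqref{m-diff} the combination $\wL(\xi;s)+e^{-\xi/\eps}\eps^3 m_s''(\xi-\eps)$ collapses to the single integral $\tfrac14 s(s-1)(s-2)e^{-\xi/\eps}\int_{-\eps}^{\eps}(\eps-\mu)^2(\mu+\xi)^{s-3}\,d\mu$, which has the same kernel and the same range as $\AAA(\xi,\eps,s)$, so its ratio to $\wL'(\xi;s)$ is handled by the log-symmetrization argument; and (ii) the remainder $e^{-u/\eps}\eps^3 m_s''(v)-e^{-\xi/\eps}\eps^3 m_s''(\xi-\eps)$ becomes $\eps^2 s(s-1)(s-2)\int_u^\xi e^{-t/\eps}(t-\eps)^{s-3}\,dt$, where now $t-\eps\le\xi-\eps\le\lambda+\xi$ holds pointwise, so monotonicity in $s$ of the ratio follows from monotonicity of $\big(\tfrac{\lambda+\xi}{t-\eps}\big)^{s-3}$. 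Without this (or an equivalent) reorganization your symmetrization step is a claim rather than a proof, so as written the argument is incomplete at its decisive point.
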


\begin{proof}
To calculate the sign of this determinant we use formula~\eqref{eq15}:
$$
B_{x_2x_2}B_{x_3x_3}-B_{x_2x_3}^2 = 
\frac{e^{-\frac u{\eps}}\PsiL''(e^{-\frac u{\eps}}h)}{4(x_1-u)(x_1-v)}
\Big(e^{\frac u\eps}\PsiL(e^{-\frac u\eps}h)-\big(h+\eps^3m_p''(v)\big)
\PsiL'(e^{-\frac u\eps}h)+\eps^2m_r''(v)\Big).
$$
Since $v<x_1<u=v+\eps$ and $\sign\PsiL''=\sign(r-2)(r-p)$ (see~\eqref{050801} and Lemma~\ref{200401}) we deduce 
that $\sign\det \{B_{x_i,x_j}\}_{2 \le i,j \le 3}$ coincides with
$$
-\sign(r-2)(r-p)\times
\sign\Big(e^{\frac u\eps}\PsiL(e^{-\frac u\eps}h)-
\big(h+\eps^3m_p''(v)\big)\PsiL'(e^{-\frac u\eps}h)+\eps^2m_r''(v)\Big).
$$
So, we need to check that
\eqlb{230701}{
\sign\Big(e^{\frac u\eps}\PsiL(e^{-\frac u\eps}h)
-\big(h+\eps^3m_p''(v)\big)\PsiL'(e^{-\frac u\eps}h)+\eps^2m_r''(v)\Big)
\buildrel?\over=\sign(p-r)(r-2)\,.
}
We return to variable $\xi$ (see~\eqref{021104}) and use the definition of $\PsiL$ in terms of $\wL$ 
(see~\eqref{Psi_L_def}) and rewrite~\eqref{230701} as follows:
$$
\sign\left(\frac{e^{\frac u\eps}}\eps\wL(\xi;r,\eps)-\Big(e^{\frac u\eps}\wL(\xi;p,\eps)+\eps^3m_p''(v)\Big)
\frac1\eps\frac{\wL'(\xi;r,\eps)}{\wL'(\xi;p,\eps)}+\eps^2m_r''(v) \right)\buildrel?\over=\sign (p-r)(r-2),
$$
which is equivalent to
\eqlb{eq040403}{
\sign\left(\frac{\wL(\xi;r,\eps)+e^{-\frac u\eps}\eps^3m_r''(v)}{\wL'(\xi;r,\eps)}-
\frac{\wL(\xi; p,\eps)+e^{-\frac u\eps}\eps^3m_p''(v)}{\wL'(\xi;p,\eps)}\right)\buildrel?\over=\sign(p-r)\,,
}
because $\sign\wL'(\xi;r,\eps)=\sign(r-2)$ by Lemma~\ref{lemmaSignW_r}. In other words, we need to prove that
the function
$$
s\mapsto\frac{\wL(\xi;s,\eps)+e^{-\frac u\eps}\eps^3m_s''(v)}{\wL'(\xi;s,\eps)}
$$
decreases for all $\xi\ge u$.

We split this function into the sum of two
\eqlb{230704}{
\frac{\wL(\xi;s,\eps)+e^{-\frac u\eps}\eps^3m_s''(v)}{\wL'(\xi;s,\eps)}=
\frac{\wL(\xi;s,\eps)+e^{-\frac\xi\eps}\eps^3m_s''(\xi-\eps)}{\wL'(\xi;s,\eps)}+
\frac{e^{-\frac u\eps}\eps^3m_s''(v)-e^{-\frac\xi\eps}\eps^3m_s''(\xi-\eps)}{\wL'(\xi;s,\eps)}
}
and prove that both of them are decreasing.

We begin with a simpler second function rewriting the expression in the numerator. 
By~\eqref{mpp+} we have
$$
e^{-\frac u\eps}\eps^3m_s''(v)-e^{-\frac\xi\eps}\eps^3m_s''(\xi-\eps)=
\eps^2s(s-1)(s-2)\int\limits_u^\xi e^{-\frac t\eps}(t-\eps)^{s-3}dt\,,
$$
and due to~\eqref{280401} and~\eqref{240403} the question is reduced to verification that the function
\eqlb{230702}{
s\mapsto\frac{\int\limits_u^\xi e^{-\frac t\eps}(t-\eps)^{s-3}dt}
{\int\limits_{-\eps\ }^{\;\eps}(\eps^2-\lambda^2)(\lambda+\xi)^{s-3}d\lambda}
}
is decreasing. Indeed, for $\eps<t<\xi$ and $\lambda>-\eps$ we have
$\frac{\lambda+\xi}{t-\eps}>1$, and therefore, the function
$$
s\mapsto\Big(\frac{\lambda+\xi}{t-\eps}\Big)^{s-3}
$$
is increasing. After integration of this family of increasing functions with positive weight we get
the following increasing function:
$$
s\mapsto\int\limits_{-\eps\ }^{\;\eps}(\eps^2-\lambda^2)\Big(\frac{\lambda+\xi}{t-\eps}\Big)^{s-3}d\lambda\,,
$$
or the following decreasing function:
\eqlb{230703}{
s\mapsto\frac{(t-\eps)^{s-3}}
{\int\limits_{-\eps\ }^{\;\eps}(\eps^2-\lambda^2)(\lambda+\xi)^{s-3}d\lambda}\,.
}
We integrate once more family of functions~\eqref{230703} with the positive weight $e^{-\frac t\eps}$
and finally obtain the decreasing function~\eqref{230702}.

Now, we consider the first summand in~\eqref{230704}. Again, we start with simplifying the expression in 
the numerator. Using twice~\eqref{m-diff}, we get
$$
\eps^2 m_s''(v)=\eps\big(m_s(v)-sv^{s-1}\big)'=m_s(v)-sv^{s-1}-\eps s(s-1)v^{s-2}.
$$
Then, we use the definition~\eqref{w_L_def} of $\wL$:
$$
\begin{aligned}
\wL(\xi;&s,\eps)+e^{-\frac\xi\eps}\eps^3m_s''(\xi-\eps)
\\
&=e^{-\frac\xi\eps}\Big(\frac{(\xi+\eps)^s-(\xi-\eps)^s}2-\eps m_s(\xi-\eps)\Big)+
\eps e^{-\frac\xi\eps}\Big(m_s(\xi-\eps)-s(\xi-\eps)^{s-1}-\eps s(s-1)(\xi-\eps)^{s-2}\Big)
\\
&=e^{-\frac\xi\eps}\Big(\frac{(\xi+\eps)^s-(\xi-\eps)^s}2-\eps s(\xi-\eps)^{s-1}-\eps^2s(s-1)(\xi-\eps)^{s-2}\Big)
\\
&=\frac14s(s-1)(s-2)e^{-\frac\xi\eps}\int\limits_{-\eps\ }^{\;\eps}(\eps-\mu)^2(\mu+\xi)^{s-3}d\mu\,.
\end{aligned}
$$
Therefore, we need to check that the function
\eqlb{240701}{
s\mapsto\frac{\int\limits_{-\eps\ }^{\;\eps}(\eps-\mu)^2(\mu+\xi)^{s-3}d\mu}
{\int\limits_{-\eps\ }^{\;\eps}(\eps^2-\lambda^2)(\lambda+\xi)^{s-3}d\lambda}
}
is decreasing. After differentiating this function, we get in numerator the following expression:
$$
\begin{aligned}
\int\limits_{-\eps\ }^{\;\eps}(\eps-\mu)^2&(\mu+\xi)^{s-3}\log(\mu+\xi)\,d\mu
\int\limits_{-\eps\ }^{\;\eps}(\eps^2-\lambda^2)(\lambda+\xi)^{s-3}d\lambda
\\&\qquad\qquad
-\int\limits_{-\eps\ }^{\;\eps}(\eps-\mu)^2(\mu+\xi)^{s-3}d\mu
\int\limits_{-\eps\ }^{\;\eps}(\eps^2-\lambda^2)(\lambda+\xi)^{s-3}\log(\lambda+\xi)\,d\lambda
\\
&=\int\limits_{\!-\eps\ \ }^{\;\eps}\!\!\!\int\limits_{\!-\eps\ }^{\;\eps}
(\eps-\mu)^2(\eps^2-\lambda^2)(\mu+\xi)^{s-3}(\lambda+\xi)^{s-3}
\big(\log(\mu+\xi)-\log(\lambda+\xi)\big)\,d\mu\,d\lambda\,.
\end{aligned}
$$
Now, we symmetrize this expression (interchanging $\mu$ and $\lambda$):
$$
\begin{aligned}
\frac12\!\!\int\limits_{\!-\eps\ \ }^{\;\eps}\!\!\!\int\limits_{\!-\eps\ }^{\;\eps}
&\big[(\eps-\mu)^2(\eps^2-\lambda^2)-(\eps-\lambda)^2(\eps^2-\mu^2)\big](\mu+\xi)^{s-3}(\lambda+\xi)^{s-3}
\big(\log(\mu+\xi)-\log(\lambda+\xi)\big)\,d\mu\,d\lambda
\\
&=\eps\!\!\int\limits_{\!-\eps\ \ }^{\;\eps}\!\!\!\int\limits_{\!-\eps\ }^{\;\eps}
(\lambda-\mu)(\eps-\mu)(\eps-\lambda)(\mu+\xi)^{s-3}(\lambda+\xi)^{s-3}
\big(\log(\mu+\xi)-\log(\lambda+\xi)\big)\,d\mu\,d\lambda\,.
\end{aligned}
$$
Since $\log$ is an increasing function, we have
$$
(\lambda-\mu)\big(\log(\mu+\xi)-\log(\lambda+\xi)\big)<0\,.
$$
This means that the derivative we have calculated is negative and the function~\eqref{240701}
decreases. This completes the proof of the lemma.
\end{proof}

\begin{Le}
\label{250701}
$B_{x_2x_2}B_{x_3x_3}-B_{x_2x_3}^2 \geq 0$ on $\Xi\cii{\mathrm{R}+}$.
\end{Le}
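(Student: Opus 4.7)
The plan is to mirror the structure of Lemma~\ref{070502}, relying on formula~\eqref{290902} in place of~\eqref{eq15}. As noted at the end of Subsection~\ref{sec_def_psiR}, the constructions in $\Xi\cii{\mathrm R+}$ may be obtained from those in $\Xi\cii{\mathrm L+}$ by the formal substitutions $m\leftrightarrow k$, $\PsiL\leftrightarrow \PsiR$, $\wL\leftrightarrow \wR$, $\eps\leftrightarrow -\eps$; the task is to implement this substitution rigorously while tracking the additional boundary term that distinguishes $k_s''$ from $m_s''$.

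First I would read off the signs of the factors in~\eqref{290902}. On $\Xi\cii{\mathrm R+}$ we have $u=x_1-d$ and $v=x_1+\Delta_-$, so $(x_1-u)(x_1-v)=-d\,\Delta_-\le 0$, the same sign as in the L case. However, by~\eqref{220906} and Lemma~\ref{200401} the sign of $\PsiR''$ is the opposite of the sign of $\PsiL''$, so the positivity of $\det$ amounts to showing
$$
\sign\Big(e^{-u/\eps}\PsiR-(h-\eps^3 k_p'')\PsiR'+\eps^2 k_r''\Big)=\sign\big((r-p)(r-2)\big),
$$
which is the opposite target sign from the corresponding relation~\eqref{230701} in Lemma~\ref{070502}.

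Next, I would pass to the parameter $\xi$ via~\eqref{eq081003} and use~\eqref{Psi_R_def} to translate everything into $\wR$. The sign condition then reduces to a monotonicity statement in $s$ for a quotient analogous to the one in~\eqref{eq040403}, and I would split this quotient into two summands exactly as in~\eqref{230704}: one involving $e^{u/\eps}\eps^3 k_s''(v)-e^{\xi/\eps}\eps^3 k_s''(\xi+\eps)$ and one involving the boundary value itself.

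Finally, I would treat the ranges $\xi\ge\eps$ and $0\le\xi\le\eps$ separately. For $\xi\ge\eps$ the relation~\eqref{w_ww_relation} and the second line of~\eqref{eq060901} let me replace $\wR$ and $\wR'$ by $\wL$ and $\wL'$ up to an $s$-independent factor $e^{2\xi/\eps}$; combined with the explicit expressions~\eqref{mpp-} and~\eqref{mpp+} for $k_s''$ and $m_s''$, the computation then reproduces, term by term, the argument of Lemma~\ref{070502}: the first summand is handled by the symmetrization trick producing an integrand proportional to $(\lambda-\mu)\bigl(\log(\lambda+\xi)-\log(\mu+\xi)\bigr)$, and the second by the monotonicity in $s$ of an integral ratio analogous to~\eqref{230702}. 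For $0\le\xi\le\eps$ the first line of~\eqref{eq060901} gives $\wR'(\xi;s,\eps)=(s-2)(\xi^2+\eps^2)(\xi+\eps)^{s-3}e^{\xi/\eps}$, so $\PsiR'(\,\cdot\,)=-\tfrac1\eps\cdot\tfrac{r-2}{p-2}(\xi+\eps)^{r-p}$, and the parenthesized expression collapses to an elementary comparison between powers of $\xi+\eps$.

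The main obstacle is the case $\xi\ge\eps$: the additional boundary term $s(s-2)\eps^{s-3}e^{(\eps-v)/\eps}$ appearing in~\eqref{mpp-} but absent in~\eqref{mpp+} contributes an extra piece to the numerator, and absorbing it into the splitting~\eqref{230704} without spoiling the sign of each summand requires care. Simultaneously reversing the target sign (as dictated by $\sign\PsiR''=-\sign\PsiL''$) while performing the substitution $\eps\to-\eps$ is the most delicate bookkeeping of the proof, and this is where a naive transplant of Lemma~\ref{070502} would fail.
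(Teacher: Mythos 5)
Your proposal follows the paper's proof essentially step for step: passing through formula~\eqref{290902}, noting that $\sign\PsiR''=-\sign\PsiL''$ while $(x_1-u)(x_1-v)=-d\,\Delta_-$ keeps the same (negative) sign, reducing to a monotonicity-in-$s$ statement for a quotient, splitting that quotient into two summands as in~\eqref{230704}, and treating the cases $0\le\xi\le\eps$ and $\xi\ge\eps$ separately. The strategy is the same one the paper implements.

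Two small points of calibration. First, your worry about the boundary term $s(s-2)\eps^{s-3}e^{(\eps-v)/\eps}$ in~\eqref{mpp-} is overstated: in the second summand it appears in the form $e^{\xi/\eps}\eps^3k_s''(\xi+\eps)-e^{u/\eps}\eps^3k_s''(v)$, and there the $\xi$-independent constant $s(s-2)\eps^s$ cancels, leaving the clean integral $-\eps^2 s(s-1)(s-2)\int_\xi^u e^{t/\eps}(t+\eps)^{s-3}\,dt$; while in the first summand the paper does not use~\eqref{mpp-} at all but rather the identity $\eps^2 k_s''(v)=\eps s(s-1)v^{s-2}-sv^{s-1}+k_s(v)$ obtained by differentiating~\eqref{m-diff} twice, so the boundary term never enters and the $k_s$ cancels against the $\eps k_s$ sitting inside $\wR$. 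Second, in the regime $0\le\xi\le\eps$, the collapse to an elementary power comparison takes care only of the first summand (yielding $\eps(\xi-\eps s)(\xi+\eps)/(\xi^2+\eps^2)$, which decreases in $s$); the second summand still requires a separate monotonicity argument, namely that $s\mapsto s(s-1)(\xi+\eps)^{3-s}\int_\xi^u e^{t/\eps}(t+\eps)^{s-3}\,dt$ increases, which follows from the fact that $s\mapsto s(s-1)a^s$ increases for $a>1$. Neither gap is structural, but both would need to be filled to complete the proof.
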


\begin{proof}
To calculate the sign of this determinant, we use formula~\eqref{290902}:
$$
B_{x_2x_2}B_{x_3x_3}-B_{x_2x_3}^2 = 
\frac{e^{-\frac u{\eps}}\PsiR''(e^{\frac u{\eps}}h)}{4(x_1-u)(x_1-v)}
\Big(e^{-\frac u\eps}\PsiR(e^{\frac u\eps}h)-\big(h-\eps^3k_p''(v)\big)\PsiR'(e^{\frac u\eps}h)+\eps^2k_r''(v)\Big).
$$
Since $u<x_1<v=u+\eps$ and $\sign\PsiR''=-\sign(r-2)(r-p)$ (see~\eqref{220906} and Lemma~\ref{200401}), we deduce 
that $\sign\det \{B_{x_i,x_j}\}_{2 \le i,j \le 3}$ coincides with
$$
\sign(r-2)(r-p)\times
\sign\Big(e^{-\frac u\eps}\PsiR(e^{\frac u\eps}h)
-\big(h-\eps^3k_p''(v)\big)\PsiR'(e^{\frac u\eps}h)+\eps^2k_r''(v)\Big).
$$
So, we need to check that
\eqlb{250702}{
\sign\Big(e^{-\frac u\eps}\PsiR(e^{\frac u\eps}h)
-\big(h-\eps^3k_p''(v)\big)\PsiR'(e^{\frac u\eps}h)+\eps^2k_r''(v)\Big)
\buildrel?\over=\sign(r-p)(r-2)\,.
}
As in the proof of the preceding lemma, we return to variable $\xi$ (see~\eqref{250703}) and use the definition 
of $\PsiR$ in terms of $\wR$ 
(see~\eqref{Psi_R_def}) to rewrite~\eqref{250702} as follows:
$$
\sign\left(-\frac{e^{-\frac u\eps}}\eps\wR(\xi;r,\eps)+\Big(e^{-\frac u\eps}\wR(\xi;p,\eps)-\eps^3k_p''(v)\Big)
\frac1\eps\frac{\wR'(\xi;r,\eps)}{\wR'(\xi;p,\eps)}+\eps^2k_r''(v) \right)\buildrel?\over=\sign (r-p)(r-2),
$$
which is equivalent to
\eqlb{250704}{
\sign\left(\frac{\wR(\xi;r,\eps)-e^{\frac u\eps}\eps^3k_r''(v)}{\wR'(\xi;r,\eps)}-
\frac{\wR(\xi; p,\eps)-e^{\frac u\eps}\eps^3k_p''(v)}{\wR'(\xi;p,\eps)}\right)\buildrel?\over=\sign(p-r)\,,
}
because $\sign\wR'(\xi;r,\eps)=\sign(r-2)$ by Lemma~\ref{ww_increase}. In other words, we need to prove that
the function
$$
s\mapsto\frac{\wR(\xi;s,\eps)-e^{\frac u\eps}\eps^3k_s''(v)}{\wR'(\xi;s,\eps)}
$$
decreases for all $\xi\le u$.

We split this function into the sum of two
\eqlb{250705}{
\frac{\wR(\xi;s,\eps)-e^{\frac u\eps}\eps^3k_s''(v)}{\wR'(\xi;s,\eps)}=
\frac{\wR(\xi;s,\eps)-e^{\frac\xi\eps}\eps^3k_s''(\xi+\eps)}{\wR'(\xi;s,\eps)}+
\frac{e^{\frac\xi\eps}\eps^3k_s''(\xi+\eps)-e^{\frac u\eps}\eps^3k_s''(v)}{\wR'(\xi;s,\eps)}
}
and prove that both of them are decreasing.

Again, we begin with a simpler second function rewriting the expression in the numerator. 
From~\eqref{mpp-} we have
$$
e^{\frac\xi\eps}\eps^3k_s''(\xi+\eps)
=s(s-2)\eps^s+\eps^2s(s-1)(s-2)\int\limits_0^\xi e^{\frac t\eps}(t+\eps)^{s-3}dt\,,
$$
and, therefore, since $v = u + \varepsilon$, we have
$$
e^{\frac\xi\eps}\eps^3k_s''(\xi+\eps)-e^{\frac u\eps}\eps^3k_s''(v)=
-\eps^2s(s-1)(s-2)\int\limits_\xi^u e^{\frac t\eps}(t+\eps)^{s-3}dt\,.
$$
If $\xi\ge\eps$ then due to~\eqref{w_ww_relation}, \eqref{280401}, and~\eqref{240403} the question is reduced 
to verification that the function
\eqlb{250706}{
s\mapsto\frac{\int\limits_\xi^u e^{\frac t\eps}(t+\eps)^{s-3}dt}
{\int\limits_{-\eps\ }^{\;\eps}(\eps^2-\lambda^2)(\lambda+\xi)^{s-3}d\lambda}\,
}
is increasing. If $\xi\le\eps$ we refer to formula~\eqref{ww_dif} and check that the function
\eqlb{260701}{
s\mapsto\frac{s(s-1)\int\limits_\xi^u e^{\frac t\eps}(t+\eps)^{s-3}dt}{(\xi+\eps)^{s-3}}
}
increases. Since $s\mapsto s(s-1)a^s$ increases on $(1,\infty)$ for $a>1$, the function~\eqref{260701}
is increasing as well, because $t>\xi$. To prove that~\eqref{250706} increases, we repeat the chain
of arguments we already used for proving that~\eqref{230702} is decreasing. Since $t>\xi$ and $\lambda<\eps$, 
we have
$\frac{\lambda+\xi}{t+\eps}<1$, and therefore, the function
$$
s\mapsto\Big(\frac{\lambda+\xi}{t+\eps}\Big)^{s-3}
$$
is decreasing. After integrating this family of decreasing functions with a positive weight, we get
the following decreasing function:
$$
s\mapsto\int\limits_{-\eps\ }^{\;\eps}(\eps^2-\lambda^2)\Big(\frac{\lambda+\xi}{t+\eps}\Big)^{s-3}d\lambda\,,
$$
or the following increasing function:
\eqlb{270701}{
s\mapsto\frac{(t+\eps)^{s-3}}
{\int\limits_{-\eps\ }^{\;\eps}(\eps^2-\lambda^2)(\lambda+\xi)^{s-3}d\lambda}\,.
}
We integrate once more family of functions~\eqref{270701} with the positive weight $e^{\frac t\eps}$
and finally obtain the increasing function~\eqref{250706}.

Now, we consider the first summand in~\eqref{250705}. First, we simplify the expression in 
the numerator. Using twice~\eqref{m-diff}, we get
$$
\eps^2 k_s''(v)=\eps\big(sv^{s-1}-k_s(v)\big)'=\eps s(s-1)v^{s-2}-sv^{s-1}+k_s(v).
$$
Then we use definition of $\wR$ (see~\eqref{220904}) first for $\xi\le\eps$:
$$
\begin{aligned}
\wR(\xi;&s,\eps)-e^{\frac\xi\eps}\eps^3 k_s''(\xi+\eps)
\\
&=\eps e^{\frac\xi\eps}\Big(k_s(\xi+\eps)-2\xi(\xi+\eps)^{s-2}\Big)-
\eps e^{\frac\xi\eps}\Big(\eps s(s-1)(\xi+\eps)^{s-2}-s(\xi+\eps)^{s-1}+k_s(\xi+\eps)\Big)
\\
&=\eps e^{\frac\xi\eps}(s-2)(\xi-\eps s)(\xi+\eps)^{s-2}\,,
\end{aligned}
$$
and the function
$$
\frac{\wR(\xi;s,\eps)-e^{\frac\xi\eps}\eps^3 k_s''(\xi+\eps)}{\wR'(\xi;s,\eps)}
\buildrel\eqref{ww_dif}\over=
\frac{\eps(\xi-\eps s)(\xi+\eps)}{\xi^2+\eps^2}
$$
decreases in $s$.

For $\xi\ge\eps$ we have:
$$
\begin{aligned}
\wR(\xi;&s,\eps)-e^{\frac\xi\eps}\eps^3 k_s''(\xi+\eps)
\\
&=e^{\frac\xi\eps}\Big(\frac{(\xi-\eps)^s-(\xi+\eps)^s}2+\eps k_s(\xi+\eps)\Big)
-\eps e^{\frac\xi\eps}\Big(\eps s(s-1)(\xi+\eps)^{s-2}-s(\xi+\eps)^{s-1}+k_s(\xi+\eps)\Big)
\\
&=e^{\frac\xi\eps}\Big(\frac{(\xi-\eps)^s-(\xi+\eps)^s}2+\eps s(\xi+\eps)^{s-1}-\eps^2s(s-1)(\xi+\eps)^{s-2}\Big)
\\
&=-\frac14 e^{\frac\xi\eps} s(s-1)(s-2)\!\!\int\limits_{-\eps\ }^{\;\eps}(\eps+\mu)^2(\mu+\xi)^{s-3}d\mu\,.
\end{aligned}
$$
Therefore, we need to check that the function
\eqlb{250708}{
s\mapsto\frac{\int\limits_{-\eps\ }^{\;\eps}(\eps+\mu)^2(\mu+\xi)^{s-3}d\mu}
{\int\limits_{-\eps\ }^{\;\eps}(\eps^2-\lambda^2)(\lambda+\xi)^{s-3}d\lambda}
}
is increasing. After differentiating this function, we get the following expression in the numerator:
$$
\begin{aligned}
\int\limits_{-\eps\ }^{\;\eps}(\eps+\mu)^2&(\mu+\xi)^{s-3}\log(\mu+\xi)\,d\mu
\int\limits_{-\eps\ }^{\;\eps}(\eps^2-\lambda^2)(\lambda+\xi)^{s-3}d\lambda
\\&\qquad\qquad
-\int\limits_{-\eps\ }^{\;\eps}(\eps+\mu)^2(\mu+\xi)^{s-3}d\mu
\int\limits_{-\eps\ }^{\;\eps}(\eps^2-\lambda^2)(\lambda+\xi)^{s-3}\log(\lambda+\xi)\,d\lambda
\\
&=\int\limits_{\!-\eps\ \ }^{\;\eps}\!\!\!\int\limits_{\!-\eps\ }^{\;\eps}
(\eps+\mu)^2(\eps^2-\lambda^2)(\mu+\xi)^{s-3}(\lambda+\xi)^{s-3}
\big(\log(\mu+\xi)-\log(\lambda+\xi)\big)\,d\mu\,d\lambda\,.
\end{aligned}
$$
Now, we symmetrize this expression (interchanging $\mu$ and $\lambda$):
$$
\begin{aligned}
\frac12\!\!\int\limits_{\!-\eps\ \ }^{\;\eps}\!\!\!\int\limits_{\!-\eps\ }^{\;\eps}
&\big[(\eps+\mu)^2(\eps^2-\lambda^2)-(\eps+\lambda)^2(\eps^2-\mu^2)\big](\mu+\xi)^{s-3}(\lambda+\xi)^{s-3}
\big(\log(\mu+\xi)-\log(\lambda+\xi)\big)\,d\mu\,d\lambda
\\
&=\eps\!\!\int\limits_{\!-\eps\ \ }^{\;\eps}\!\!\!\int\limits_{\!-\eps\ }^{\;\eps}
(\mu-\lambda)(\eps+\mu)(\eps+\lambda)(\mu+\xi)^{s-3}(\lambda+\xi)^{s-3}
\big(\log(\mu+\xi)-\log(\lambda+\xi)\big)\,d\mu\,d\lambda\,.
\end{aligned}
$$
Since $\log$ is an increasing function, we have
$$
(\mu-\lambda)\big(\log(\mu+\xi)-\log(\lambda+\xi)\big)>0\,.
$$
This means that the derivative we have calculated is positive and the function~\eqref{250708}
increases. This completes the proof of the lemma.
\end{proof}

\begin{Le}
\label{250707}
$B_{x_2x_2}B_{x_3x_3}-B_{x_2x_3}^2 \geq 0$ on $\Xi\cii{\mathrm{ch}+}$.
\end{Le}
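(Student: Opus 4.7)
The strategy parallels the treatment of $\Xi\cii{\mathrm L+}$ in Lemma~\ref{070502} and $\Xi\cii{\mathrm R+}$ in Lemma~\ref{250701}, but is adapted to the chord-based foliation: on $\Xi\cii{\mathrm{ch}+}$ the function $B$ is given by $B(x)=w(a,b;r)$ with $(a,b)$ determined implicitly by $x_2=(a+b)x_1-ab$ and $x_3=w(a,b;p)$, where $w(a,b;s)=\frac{(b-x_1)a^s+(x_1-a)b^s}{b-a}$. All raw material for the first derivatives is already available: $B_{x_3}$ in the form \eqref{230603}, two equivalent expressions for $B_{x_2}$ in \eqref{260605} and \eqref{020703}, the second derivative $B_{x_3x_3}$ as the double integral in \eqref{280801}, and the implicit-differentiation formulas \eqref{eq300403}, \eqref{121002}, \eqref{260602} for $a_{x_2},b_{x_2},a_{x_3},b_{x_3}$.

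First I would compute $B_{x_2x_3}$ and $B_{x_2x_2}$ by differentiating \eqref{260605} (or, symmetrically, \eqref{020703}) with respect to $x_3$ and $x_2$ using the chain rule. As in the derivation of \eqref{280801}, after substituting \eqref{121002} and simplifying with the representation \eqref{270801} of $\AAA$, each of these second derivatives should acquire a double-integral form over $[a,b]\times[a,b]$ with integrand a product of powers $\lambda^{r-3}\mu^{p-3}$ (or $\lambda^{p-3}\mu^{p-3}$, $\lambda^{r-3}\mu^{r-3}$) and polynomial factors in $\lambda,\mu,x_1,a,b$.

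Second I would assemble the determinant $\Delta:=B_{x_2x_2}B_{x_3x_3}-B_{x_2x_3}^{\,2}$. After the standard symmetrization $\lambda\leftrightarrow\mu$ used in the proof of Lemma~\ref{200401} on $\Xi\cii{\mathrm{ch}+}$, the four double integrals collapse into a single quadruple integral whose integrand factors, I expect, as
\[
W(\lambda,\mu,\lambda',\mu';x_1,a,b)\,\bigl(\lambda^{r-p}-\lambda'^{\,r-p}\bigr)(\lambda-\lambda')\,
\lambda^{p-3}\lambda'^{\,p-3}\mu^{p-3}\mu'^{\,p-3},
\]
with a geometric weight $W\ge 0$. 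Non-negativity of $W$ should be checked exactly as in \eqref{280801}: viewed as a quadratic in $x_1$, it will be non-negative on $[a,b]$ because its values at the endpoints $x_1=a$ and $x_1=b$ factor as products of non-negative quantities of the form $(\lambda-a)(\mu-a)\cdots$ and $(b-\lambda)(b-\mu)\cdots$. The combinatorial identity $(\lambda-\lambda')\bigl(\lambda^{r-p}-\lambda'^{\,r-p}\bigr)\ge0$ then yields $\mathop{\mathrm{sign}}\Delta = \mathop{\mathrm{sign}}(\text{constant prefactor involving }r,p)$, which by Lemma~\ref{200401} is exactly what is needed so that together with the sign of $B_{x_3x_3}$ the function $B$ is locally concave/convex as required.

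The main obstacle is purely algebraic: taming the four terms of $\Delta$ into a single, cleanly signed quadruple integral. A useful sanity check along the way is that $B$ is linear on each chord $[U(a),U(b)]$, which forces the one-dimensional constraint $v^{\top}\!\big(\nabla^2 B\big)v=0$ with $v=(b-a,\,b^2-a^2,\,b^p-a^p)$; this identity provides a relation between the second partials that should cancel the most unwieldy cross-terms produced in step one and confirm the expected factorization in step two.
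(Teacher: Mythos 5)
Your outline stops exactly where the real work begins, and the structure you predict for the minor is not the one that actually materializes. If you differentiate \eqref{260605} brute-force in $x_2$ and $x_3$, substitute \eqref{121002} and \eqref{260602}, and symmetrize, you do not arrive at a single quadruple integral with a pointwise one-signed integrand whose sign is ``a constant prefactor involving $r,p$''; indeed the lemma asserts $B_{x_2x_2}B_{x_3x_3}-B_{x_2x_3}^2\ge 0$ for \emph{all} admissible $p,r$ (this is what both the concave and the convex case require), so its sign cannot depend on $(r-2)(r-p)$ the way your last sentence suggests. The paper's proof rests on three specific ingredients that your plan never produces: (i) averaging the two representations \eqref{260605} and \eqref{020703} yields the linear relation $B_{x_2}=\DDD(r)-\DDD(p)B_{x_3}$ with $\DDD(s)=\frac s2\,\frac{b^{s-1}-a^{s-1}}{b-a}$ (formula~\eqref{310801}), which collapses the minor to $\big[\EEE(r)-\EEE(p)B_{x_3}\big]B_{x_3x_3}-\big[\DDD_{x_3}(r)-\DDD_{x_3}(p)B_{x_3}\big]^2$ with $\EEE(s)=\DDD_{x_2}(s)+\DDD(p)\DDD_{x_3}(s)$; (ii) the identity $\EEE(s)=\frac{\AAA(p)}{4(b-a)^4}\AAA_{x_3}(s)$ (formula~\eqref{030905}), which turns the first term into the perfect square $\big(\frac{\AAA(p)}{2(b-a)^2}B_{x_3x_3}\big)^2$; and (iii) the pointwise domination $(b-a)\big[\lambda\mu-ab+(a+b-\lambda-\mu)x_1\big]\ge\big|(\lambda-a)(\mu-a)(b-x_1)-(b-\lambda)(b-\mu)(x_1-a)\big|$ for $\lambda,\mu,x_1\in[a,b]$, which compares the double-integral weight of the first square (that of \eqref{280801}) with the weight appearing in \eqref{030908}. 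So the minor is genuinely a difference of two squares of double integrals, and positivity comes from comparing the magnitudes of two different weights — a comparison argument, not the single-weight endpoint positivity check of \eqref{280801} that you invoke. A one-signed quadruple-integral representation exists only \emph{after} this two-squares decomposition and domination inequality are in hand; nothing in a naive expansion-plus-symmetrization guarantees it, and you give no argument that it would.

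Your fallback ``sanity check'' also cannot supply the missing cancellation: linearity of $B$ along the chord kills the full $3\times3$ Hessian in the direction $(b-a,\,b^2-a^2,\,b^p-a^p)$, but that direction has a nonzero $x_1$-component, so the resulting identity involves $B_{x_1x_1}$, $B_{x_1x_2}$, $B_{x_1x_3}$ and yields no relation among the entries of the $x_2x_3$ minor alone. In short, the proposal is a reasonable heuristic program in the spirit of Lemmas~\ref{070502} and~\ref{250701}, but the decisive algebraic identities \eqref{310801} and \eqref{030905} and the domination inequality are absent, and without them the claimed factorization and sign conclusion remain unproved.
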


\begin{proof}
Using~\eqref{230603}, we take a half sum of expressions~\eqref{260605} and~\eqref{020703} to obtain:
\eqlb{310801}{
B_{x_2}=\DDD(r)-\DDD(p)B_{x_3}\,,
}
where
$$
\DDD(s)=\DDD(s;a,b)=\frac s2\cdot\frac{b^{s-1}-a^{s-1}}{b-a}\,.
$$
Using this representation, we express the derivatives of $B$ with respect to $x_2$ via derivatives with 
respect to $x_3$:
\begin{align*}
B_{x_2x_3}&=\DDD_{x_3}(r)-\DDD_{x_3}(p)B_{x_3}-\DDD(p)B_{x_3x_3};
\\
B_{x_2x_2}&=\DDD_{x_2}(r)-\DDD_{x_2}(p)B_{x_3}-\DDD(p)B_{x_2x_3}
\\
&=\big[\DDD_{x_2}(r)-\DDD(p)\DDD_{x_3}(r)\big]-\big[\DDD_{x_2}(p)-\DDD(p)\DDD_{x_3}(p)\big]B_{x_3}
+\DDD(p)^2B_{x_3x_3}\,.
\end{align*}
Using these formulas, after some simplifications,  we rewrite the minor under consideration in the following form:
\eqlb{310802}{
B_{x_2x_2}B_{x_3x_3}-B_{x_2x_3}^2
=\big[\EEE(r)-\EEE(p)B_{x_3}\big]B_{x_3x_3}-\big[\DDD_{x_3}(r)-\DDD_{x_3}(p)B_{x_3}\big]^2\,,
}
where $\EEE(s)=\DDD_{x_2}(s)+\DDD(p)\DDD_{x_3}(s)$.

Now, we plan to find some integral representations for each factor in~\eqref{310802}, from where it will be clear that
this expression is positive. We already have an integral representation for $B_{x_3x_3}$, this is 
formula~\eqref{280801}. Let us show that the factor $\EEE(r)-\EEE(p)B_{x_3}$ has almost the same representation,
the difference consists in the factor in front of the integral.

We start with an integral representation for $\DDD$:
\eqlb{030901}{
\DDD(s)=\frac{s(s-1)}{2(b-a)}\int\limits_a^b\lambda^{s-2}d\lambda\,.
}
Whence
\begin{align*}
\DDD_a(s)&=\frac{s(s-1)(s-2)}{2(b-a)^2}\int\limits_a^b(b-\lambda)\lambda^{s-3}d\lambda\,;
\\
\DDD_b(s)&=\frac{s(s-1)(s-2)}{2(b-a)^2}\int\limits_a^b(\lambda-a)\lambda^{s-3}d\lambda\,,
\end{align*}
and
\eqlb{030902}{
\DDD_{x_i}(s)=\frac{s(s-1)(s-2)}{2(b-a)^2}\int\limits_a^b\big[(b-\lambda)a_{x_i}+(\lambda-a)b_{x_i}\big]
\lambda^{s-3}d\lambda\,.
}
We use this expression to get a representation for $\EEE$:
\eqlb{030903}{
\begin{aligned}
\EEE(s)&=\DDD_{x_2}(s)+\DDD(p)\DDD_{x_3}(s)
\\
&=\frac{s(s-1)(s-2)}{2(b-a)^2}\int\limits_a^b
\big[(b-\lambda)(a_{x_2}+\DDD(p)a_{x_3})+(\lambda-a)(b_{x_2}+\DDD(p)b_{x_3})\big]\lambda^{s-2}d\lambda\,.
\end{aligned}
}
We have formula~\eqref{121002} for $a_{x_3}$ and $b_{x_3}$: 
\eqlb{030904}{
(b-x_1)a_{x_3}=(x_1-a)b_{x_3}=\frac{(b-a)^2}{\AAA(p)}\,,
}
and formulas~\eqref{020702} and~\eqref{260604} for $a_{x_2}$ and $b_{x_2}$:
$$
(b-x_1)a_{x_2}=\frac{b^p-a^p-pb^{p-1}(b-a)}{\AAA(p)}\qquad\text{and}\qquad
(x_1-a)b_{x_2}=-\frac{b^p-a^p-pa^{p-1}(b-a)}{\AAA(p)}\,.
$$
If we plug these expressions into the parts of~\eqref{030903}, we get
\begin{align*}
a_{x_2}+\DDD(p)a_{x_3}&=\frac1{b-x_1}\Big[\frac{b^p-a^p-pb^{p-1}(b-a)}{\AAA(p)}+
\frac p2\cdot\frac{b^{p-1}-a^{p-1}}{b-a}\cdot\frac{(b-a)^2}{\AAA(p)}\Big]
\\
&=\frac1{2\AAA(p)(b-x_1)}\big[2b^p-2a^p-p(b^{p-1}+a^{p-1})(b-a)\big]=-\frac1{2(b-x_1)}\,,
\end{align*}
and similarly
$$
b_{x_2}+\DDD(p)b_{x_3}=\frac1{2(x_1-a)}\,.
$$
As a result, we can rewrite~\eqref{030903} as follows
\eqlb{030905}{
\begin{aligned}
\EEE(s)
&=\frac{s(s-1)(s-2)}{4(b-a)^2}\int\limits_a^b
\Big[\frac{\lambda-a}{x_1-a}-\frac{b-\lambda}{b-x_1}\Big]\lambda^{s-3}d\lambda
\\
&=\frac{s(s-1)(s-2)}{4(b-a)(x_1-a)(b-x_1)}\int\limits_a^b(\lambda-x_1)\lambda^{s-3}d\lambda
=\frac{\AAA(p)}{4(b-a)^4}\AAA_{x_3}(s)\,.
\end{aligned}
}
In the last equality we use representation~\eqref{030906}.

Finally, we calculate the first factor in~\eqref{310802}:
\begin{align*}
\EEE(r)-\EEE(p)B_{x_3}&\buildrel\eqref{230603}\over=\frac{\EEE(r)\AAA(p)-\EEE(p)\AAA(r)}{\AAA(p)}
=\frac1{4(b-a)^4}\big(\AAA_{x_3}(r)\AAA(p)-\AAA_{x_3}(p)\AAA(r)\big)
\\
&=\frac{\AAA(p)^2}{4(b-a)^4}\diff{}{x_3}\Big(\frac{\AAA(r)}{\AAA(p)}\Big)\buildrel\eqref{230603}\over=\frac{\AAA(p)^2}{4(b-a)^4}B_{x_3x_3}\,.
\end{align*}
Therefore,
\eqlb{030907}{
\begin{aligned}
\big[\EEE(r)&-\EEE(p)B_{x_3}\big]B_{x_3x_3}=\Big(\frac{\AAA(p)}{2(b-a)^2}B_{x_3x_3}\Big)^2
\\
&=\Bigg(\frac{r(r-1)(r-2)p(p-1)(p-2)(b-a)}{4(x_1-a)(b-x_1)\AAA(p)^2}\times
\\
&\qquad\times\int\limits_a^b\!\!\int\limits_a^b\big[\lambda\mu-ab+(a+b-\lambda-\mu)x_1\big]
(\lambda-\mu)(\lambda^{r-p}-\mu^{r-p})\lambda^{p-3}\mu^{p-3}d\lambda\,d\mu\Bigg)^2.
\end{aligned}
}

Now, we find an integral representation for another term in~\eqref{310802}:
\eqlb{030908}{
\begin{aligned}
\DDD_{x_3}&(r)-\DDD_{x_3}(p)B_{x_3}=\frac{\DDD_{x_3}(r)\AAA(p)-\DDD_{x_3}(p)\AAA(r)}{\AAA(p)}
\\
&\buildrel{\eqref{270801},\eqref{030902}}\over=\frac{r(r-1)(r-2)p(p-1)(p-2)(b-a)}{2\AAA(p)(b-a)^2}\times
\\
&\quad\times\int\limits_a^b\!\!\int\limits_a^b\big[(\lambda-a)(\mu-a)b_{x_3}-(b-\lambda)(b-\mu)a_{x_3}\big]
(\lambda-\mu)\lambda^{r-3}\mu^{p-3}d\lambda\,d\mu
\\
&\buildrel{\eqref{030904}}\over=\frac{r(r-1)(r-2)p(p-1)(p-2)}{4(x_1-a)(b-x_1)\AAA(p)^2}\times
\\
&\quad\times\int\limits_a^b\!\!\int\limits_a^b\big[(\lambda-a)(\mu-a)(b-x_1)-(b-\lambda)(b-\mu)(x_1-a)\big]
(\lambda-\mu)(\lambda^{r-p}-\mu^{r-p})\lambda^{p-3}\mu^{p-3}d\lambda\,d\mu\,,\hskip-20pt
\end{aligned}
}
where we, as usual, used symmetrization to get the last formula in the chain.

It remains to compare the different parts of expression in~\eqref{030907} and~\eqref{030908}. We see that 
$$
(b-a)\big[\lambda\mu-ab+(a+b-\lambda-\mu)x_1\big]\ge\big|(\lambda-a)(\mu-a)(b-x_1)-(b-\lambda)(b-\mu)(x_1-a)\big|
$$
for $\lambda, \mu, x_1 \in [a,b]$. 
Indeed, on the left-hand side we have a linear function in $x_1$ and we have a convex piecewise linear function
on the right-hand side. The values of both functions coincide at the endpoints of the interval $a\le x_1\le b$.
Therefore, inside the interval the left function is strictly larger than the right one. Since the remaining
terms are the same and they do not change its sign inside the interval, we have the required inequalities for
the squares of integrals.
\end{proof}
\end{appendices}

\noindent Vasily Vasyunin\\
Department of Mathematics and Computer Science,\\
St. Petersburg State University, 14-th Line Vasilyevsky Island, 29,\\
199178, St. Petersburg, Russia\\
vasyunin@pdmi.ras.ru\\

\noindent Pavel Zatitskiy\\
Department of Mathematics and Computer Science,\\
St. Petersburg State University, 14-th Line Vasilyevsky Island, 29,\\
199178, St. Petersburg, Russia\\
pavelz@pdmi.ras.ru\\

\noindent Ilya Zlotnikov\\
Department of Mathematics and Computer Science,\\
St. Petersburg State University, 14-th Line Vasilyevsky Island, 29,\\
199178, St. Petersburg, Russia\\
i.zlotnikov@spbu.ru\\

\end{document}